\title[The Free-boundary Euler equations]{A regularity result for the free boundary compressible\\ Euler equations of a liquid}
\author[L.~Li]{Linfeng Li}
\address{Department of Mathematics\\
University of California Los Angeles\\
Los Angeles, CA 90095}
\email{lli265@math.ucla.edu}
\chardef\forshowkeys=0
\chardef\refcheck=1
\chardef\showllabel=0
\chardef\sketches=1
\chardef\coloryes=0 
\chardef\isitdraft=0 
\def\eqref#1{({\ref{#1}})}                
\definecolor{mygray}{rgb}{.6, .6, .6}
\definecolor{refkey}{rgb}{.3,0.3,0.3}
\def\startnewsection#1#2{\section{#1}\label{#2}\setcounter{equation}{0}}   
\def\nnewpage{} 
\def\startnewsection#1#2{\section{#1}\label{#2}\setcounter{equation}{0}}   
\def\eqref#1{({\ref{#1}})}    
\def\nnewpage{} 
\begin{document}
	
	\def\colr{\color{red}}
	\def\ques{{\colr \underline{??????}\colb}}
	\def\nto#1{{\colC \footnote{\em \colC #1}}}
	\def\fractext#1#2{{#1}/{#2}}
	\def\fracsm#1#2{{\textstyle{\frac{#1}{#2}}}}   
	\def\nnonumber{}
	\def\ges{\gtrsim}
	\def\les{\lesssim}
	
	\def\colr{{}}
	\def\colg{{}}
	\def\colb{{}}
	\def\colu{{}}
	\def\cole{{}}
	\def\colA{{}}
	\def\colB{{}}
	\def\colC{{}}
	\def\colD{{}}
	\def\colE{{}}
	\def\colF{{}}

	\def\bp{\bar{\partial}}
	\def\pa{\partial}
	\def\pt{\partial_t}
	\def\dd{w}
	\def\DD{D}
	
	\ifnum\coloryes=1
	
	\definecolor{coloraaaa}{rgb}{0.1,0.2,0.8}
	\definecolor{colorbbbb}{rgb}{0.1,0.7,0.1}
	\definecolor{colorcccc}{rgb}{0.8,0.3,0.9}
	\definecolor{colordddd}{rgb}{0.0,.5,0.0}
	\definecolor{coloreeee}{rgb}{0.8,0.3,0.9}
	\definecolor{colorffff}{rgb}{0.8,0.9,0.9}
	\definecolor{colorgggg}{rgb}{0.5,0.0,0.4}
	\definecolor{coloroooo}{rgb}{0.45,0.0,0}

	\def\colb{\color{black}}
	\def\colr{\color{red}}
	\def\cole{\color{coloroooo}}
	
	\def\colu{\color{blue}}
	\def\colg{\color{colordddd}}
	\def\colgray{\color{colorffff}}

	\def\colA{\color{coloraaaa}}
	\def\colB{\color{colorbbbb}}
	\def\colC{\color{colorcccc}}
	\def\colD{\color{colordddd}}
	\def\colE{\color{coloreeee}}
	\def\colF{\color{colorffff}}
	\def\colG{\color{colorgggg}}

	\fi
	\ifnum\isitdraft=1
	\chardef\coloryes=1 
	\baselineskip=17.6pt
	\pagestyle{myheadings}
	\def\const{\mathop{\rm const}\nolimits}  
	\def\diam{\mathop{\rm diam}\nolimits}    
	\def\rref#1{{\ref{#1}{\rm \tiny \fbox{\tiny #1}}}}
	\def\theequation{\fbox{\bf \thesection.\arabic{equation}}}
	
	\chead{}
	\rhead{\thepage}
	\def\nnewpage{\newpage}
	\newcounter{startcurrpage}
	\newcounter{currpage}
	\def\llll#1{{\rm\tiny\fbox{#1}}}
	\def\blackdot{{\color{red}{\hskip-.0truecm\rule[-1mm]{4mm}{4mm}\hskip.2truecm}}\hskip-.3truecm}
	\def\bluedot{{\colC {\hskip-.0truecm\rule[-1mm]{4mm}{4mm}\hskip.2truecm}}\hskip-.3truecm}
	\def\purpledot{{\colA{\rule[0mm]{4mm}{4mm}}\colb}}
	\def\pdot{\purpledot}
	\else
	\baselineskip=14.8pt
	\def\blackdot{{\color{red}{\hskip-.0truecm\rule[-1mm]{4mm}{4mm}\hskip.2truecm}}\hskip-.3truecm}
	\def\purpledot{{\rule[-3mm]{8mm}{8mm}}}
	\def\pdot{}
	\fi

	\def\textand{\qquad \text{and}\qquad}
	\def\pp{p}
	\def\qq{{\tilde p}}
	\def\KK{\mathcal{K}}
	\def\MM{\mathcal{M}}
	\def\II{\mathcal{I}}
	\def\JJ{\mathcal{J}}
		\def\LL{\mathcal{L}}
	\def\GG{\mathcal{G}}
	\def\bet{\kappa}
	\def\ema#1{{#1}}
	\def\emb#1{#1}
	\def\rhog{\rho_d}
	\def\rhol{\rho_u}
	
	\ifnum\isitdraft=1
	\def\llabel#1{\nonumber}
	\else
	\def\llabel#1{\nonumber}
	\fi
	
	\def\tepsilon{\tilde\epsilon}
	\def\epsilonz{\epsilon_0}
	\def\restr{\bigm|}
	\def\into{\int_{\Omega}}
	\def\intu{\int_{\Gamma_1}}
	\def\intl{\int_{\Gamma_0}}
	\def\tpar{\tilde\partial}
	\def\bpar{\,|\nabla_2|}
	\def\barpar{\bar\partial}
	\def\RR{\mathbb{R}}
	\def\TT{\mathbb{T}}
	\def\FF{F}
	\def\gdot{{\color{green}{\hskip-.0truecm\rule[-1mm]{4mm}{4mm}\hskip.2truecm}}\hskip-.3truecm}
	\def\bdot{{\color{blue}{\hskip-.0truecm\rule[-1mm]{4mm}{4mm}\hskip.2truecm}}\hskip-.3truecm}
	\def\cydot{{\color{cyan} {\hskip-.0truecm\rule[-1mm]{4mm}{4mm}\hskip.2truecm}}\hskip-.3truecm}
	\def\rdot{{\color{red} {\hskip-.0truecm\rule[-1mm]{4mm}{4mm}\hskip.2truecm}}\hskip-.3truecm}
	
	\def\tdot{\fbox{\fbox{\bf\color{blue}\tiny I'm here; \today \ \currenttime}}}
	\def\nts#1{{\color{red}\hbox{\bf ~#1~}}} 
	
	\def\ntsr#1{\vskip.0truecm{\color{red}\hbox{\bf ~#1~}}\vskip0truecm} 
	
	\def\ntsf#1{\footnote{\hbox{\bf ~#1~}}} 
	\def\ntsf#1{\footnote{\color{red}\hbox{\bf ~#1~}}} 
	\def\bigline#1{~\\\hskip2truecm~~~~{#1}{#1}{#1}{#1}{#1}{#1}{#1}{#1}{#1}{#1}{#1}{#1}{#1}{#1}{#1}{#1}{#1}{#1}{#1}{#1}{#1}\\}
	\def\biglineb{\bigline{$\downarrow\,$ $\downarrow\,$}}
	\def\biglinem{\bigline{---}}
	\def\biglinee{\bigline{$\uparrow\,$ $\uparrow\,$}}
	\def\ceil#1{\lceil #1 \rceil}
	\def\gdot{{\color{green}{\hskip-.0truecm\rule[-1mm]{4mm}{4mm}\hskip.2truecm}}\hskip-.3truecm}
	\def\bluedot{{\color{blue} {\hskip-.0truecm\rule[-1mm]{4mm}{4mm}\hskip.2truecm}}\hskip-.3truecm}
	\def\rdot{{\color{red} {\hskip-.0truecm\rule[-1mm]{4mm}{4mm}\hskip.2truecm}}\hskip-.3truecm}
	\def\dbar{\bar{\partial}}
	\newtheorem{Theorem}{Theorem}[section]
	\newtheorem{Corollary}[Theorem]{Corollary}
	\newtheorem{Proposition}[Theorem]{Proposition}
	\newtheorem{Lemma}[Theorem]{Lemma}
	\newtheorem{Remark}[Theorem]{Remark}
	\newtheorem{definition}{Definition}[section]
	\def\theequation{\thesection.\arabic{equation}}
	\def\cmi#1{{\color{red}IK: #1}}
	\def\cmj#1{{\color{red}IK: #1}}
	\def\linfeng{\rm \colr Linfeng:~} 
	\def\old{\rm \colu Linfeng:~} 
	\def\XX{\mathbf{X}}
	\def\LOT{{\rm LOT}}

	\def\sqrtg{\sqrt{g}}
	\def\OO{\tilde\Omega}
	\def\EE{{\mathcal E}}
	\def\lot{{\rm l.o.t.}}                       
	\def\endproof{\hfill$\Box$\\}
	\def\square{\hfill$\Box$\\}
	\def\inon#1{\ \ \ \ \text{~~~~~~#1}}                
	\def\comma{ {\rm ,\qquad{}} }            
	\def\commaone{ {\rm ,\qquad{}} }         
	\def\dist{\mathop{\rm dist}\nolimits}    
	\def\ad{\mathop{\rm ad}\nolimits}    
	\def\sgn{\mathop{\rm sgn\,}\nolimits}    
	\def\Tr{\mathop{\rm Tr}\nolimits}    
	\def\dive{\mathop{\rm div}\nolimits}    
	\def\grad{\mathop{\rm grad}\nolimits}    
	\def\curl{\mathop{\rm curl}\nolimits}    
	\def\Curl{\mathop{\rm Curl}\nolimits}    
	\def\det{\mathop{\rm det}\nolimits}    
	\def\supp{\mathop{\rm supp}\nolimits}    
	\def\re{\mathop{\rm {\mathbb R}e}\nolimits}    
	\def\wb{\bar{\omega}}
	\def\Wb{\bar{W}}
	\def\indeq{\quad{}}                     
	\def\indeqtimes{\indeq\indeq\indeq\indeq\times} 
	\def\period{.}                           
	\def\semicolon{\,;}                      
	\def\bfx{\mathbf{x}}
	\newcommand{\cD}{\mathcal{D}}
	\newcommand{\cH}{\mathcal{H}}
	\newcommand{\imp}{\Rightarrow}
	\newcommand{\tr}{\operatorname{tr}}
	\newcommand{\vol}{\operatorname{vol}}
	\newcommand{\id}{\operatorname{id}}
	\newcommand{\p}{\parallel}
	\newcommand{\norm}[1]{\Vert#1\Vert}
	\newcommand{\abs}[1]{\vert#1\vert}
	\newcommand{\nnorm}[1]{\left\Vert#1\right\Vert}
	\newcommand{\aabs}[1]{\left\vert#1\right\vert}

	\ifnum\showllabel=1
	\def\llabel#1{\label{#1}}
	\else
	\def\llabel#1{\notag}
	\fi
	
	\def\bcb{\begin{color}{blue}}
	\def\bcr{\begin{color}{red}}
	\def\ec{\end{color}}
	\def\pa{\partial}

	\begin{abstract}
	We derive a~priori estimates for the compressible free boundary Euler equations in the case of a liquid without surface tension. 
	We provide a new weighted functional framework which leads to the improved regularity of the flow map by using the Hardy inequality.
	One of main ideas is to decompose the initial density function.
	It is worth mentioning that in our analysis we do not need the higher order wave equation for the density. 
	\end{abstract}

	\maketitle
	\tableofcontents
\startnewsection{Introduction}{sec01}
In this paper, we study the vacuum free boundary problem for compressible fluids in $\mathbb{R}^3$ described by the compressible Euler equations
\begin{align}
	&
	\rho ( \pt u +  u \cdot\nabla u ) 
	+
	\nabla p
	=0
	\inon{in $\Omega(t)$},
	\label{Euler1}
	\\&
	\pt \rho +u \cdot \nabla \rho + \rho \dive u=0
	\inon{in $\Omega(t)$}
	,
	\label{Euler2}
\end{align}
where $\Omega(t)$ represents an open 
subset of $\mathbb{R}^3$ which is occupied by the fluid at the time $t$, whose boundary is denoted by $\partial \Omega(t)$.
In \eqref{Euler1}--\eqref{Euler2}, $u= u(t,x)$, $\rho = \rho (t,x)$, and $p=p(t,x)$ denote the velocity field, density, and pressure, respectively.
To close the system, we consider the equation of state 
\begin{align}
	p(\rho)
	=
	K_1 \rho^\gamma 
	-
	K_2
	\label{Euler3}
\end{align}
for polytropic fluids,
where $K_1, K_2 >0$ are constants and $\gamma> 1$ is the adiabatic constant (see~\cite{CF}). 
The system \eqref{Euler1}--\eqref{Euler3} is supplemented with the initial, kinematic, and vacuum boundary conditions
\begin{align}
	&
	\Omega(0)
	=
	\Omega
	\\&
	(u(0, x), \rho(0, x))
	=
	(u_0 (x), \rho_0(x)),
	\inon{$x\in\Omega$}
		\label{Euler7}
	\\& \mathcal V (\Gamma(t))= u(t,x) \cdot n(t), \inon{$x\in\Gamma(t)$}\label{Euler-kinematic}	
	\\&
	p(t,x)=0,
	\inon{$x\in\Gamma(t)$}
	,
	\label{Euler4}
\end{align}
where $\Gamma (t) \subset \partial \Omega (t)$ is the moving free boundary,  $n(t)$ is the outward unit normal vector to $\Gamma (t)$, and $\mathcal V (\pa\Omega(t))$ is the normal velocity of $\Gamma (t)$. 
In order to model a compressible liquid, we assume that the initial density is strictly positive everywhere in the domain.
This problem is known to be ill-posed (see \cite{E}) unless the Rayleigh-Taylor sign condition
\begin{equation}
-\frac{\partial p}{\partial n}
 \geq \nu >0
 \inon{on~$\Gamma (t)$}
 ,
	\label{PVC}
\end{equation}
holds for some constant $\nu>0$.
The condition \eqref{PVC} is a natural physical condition which indicates that the pressure, and thus also the density, is greater in the interior than on the boundary.

In recent years, there have been a lot of effort made toward understanding the well-posedness of this free-boundary problem.
For the incompressible Euler equations, the divergence of the velocity field is zero and the density is a fixed constant.
Additionally, the pressure is not determined by the equation of state but rather a Lagrange multiplier enforcing the divergence free condition.
For the existence theory, we refer the reader to 
\cite{BHL, C, KN, N, RS1, RS2, S, W1, W2, Y1, Y2} in various settings including irrotationality (i.e., $\curl u = 0$) and \cite{CS1, CS2, CL, DE, L3, SZ1, SZ2} for rotational fluids.

Concerning the compressible fluids, we distinguish the cases between a liquid and a gas.
The gas problem refers to the equation of state \eqref{Euler3} with the constant $K_2=0$, which combined with the boundary condition \eqref{Euler4} leads to the vanishing of the density on the moving boundary.
As a consequence, the system becomes degenerate along the vacuum boundary (\cite{CLS, JM2}) and the standard method of symmetrizable hyperbolic equations cannot be applied. We refer to \cite{CS3, IT, JM1, JM2, LXZ} and reference therein for the well-posedness results.
In the case of a liquid, a commonly used equation of state is \eqref{Euler3} with a positive constant $K_2>0$, which is the pressure law we treat in this paper.
As opposed to the vanishing of the density on the boundary in the case of a gas, the density remains strictly positive everywhere in the domain and thus the Euler equations are uniformly hyperbolic.
The existence theory was established by Lindblad  in \cite{L2} using the Nash-Moser construction.
Trakhinin provided a different proof for the local well-posedness in \cite{T}, using the theory of symmetric hyperbolic systems.
Then, Coutand, Hole, and Shkoller in \cite{CHS} proved the well-posedness using the vanishing viscosity method and the time-differentiated a~priori estimates. 
A recent work \cite{LZ} by Luo and Zhang established the local well-posedness of the compressible gravity water wave problem using the Alinhac's good unknowns.
For other local well-posedness results in various settings, we refer to \cite{DK1, DK2, DL, GLL, L1, LL}.

In this paper, we derive a~priori estimates for the compressible free boundary Euler equations without surface tension in the case of a liquid.
Using a decomposition of the initial density (see~\eqref{EQ184}--\eqref{EQ185}), we provide a new weighted functional framework for the liquid problem.
An application of the Hardy inequality then leads to the improved regularity of the flow map and hence Jacobian, which in turn closes the energy estimates.
Note that in \cite{CHS}, the higher-order regularity of the Jacobian is obtained using the wave equation of the density, which requires a high time derivative regularity of the initial data.
Here we provide a direct method and a different functional framework with a low time-differentiated regularity.
In addition, we assume that the mixed tangential-time derivative of the initial data is in some weighted $L^2$ space as opposed to inhomogeneoues Sobolev space of the time derivative used in \cite{CHS}. As a consequence, our regularity result and the regularity in \cite{CHS} do not imply each other.

Under the Rayleigh-Taylor sign condition, \eqref{EQ113} below, the initial density grows as the distance function in the inward normal direction.
Also, the initial density is strictly bounded from below.
In light of this, we decompose the initial density into two parts (see~\eqref{EQ184}--\eqref{EQ185}): one is degenerate along the boundary with the growth rate of the distance function and the other is uniformly bounded from below.
A careful analysis of each part leads to weighted and homogeneous structure of the functional space.
For the degenerate part, we deploy the Hardy inequality, Lemma~\ref{hardy}, and obtain an improved $H^{0.5}$ regularity of the flow map which in turn leads to the improved regularity of the Jacobian (see Section~\ref{sec06}).
The  part of the initial density
which is uniformly bounded from below
produces a positive energy contribution on the boundary, allowing us to close the estimate for the improved regularity using the elliptic estimate.

In the pure tangential energy estimates, the Jacobian and cofactor matrix enter as a highest order term which requires improved regularity of the Jacobian and flow map.
To overcome this difficulty, we uncover the improved regularity by using the Euler momentum equation and a div-curl elliptic estimate.
Namely, we introduce a new fractional $H^{1.5}$ Sobolev space so that the highest order term involving the Jacobian and the cofactor matrix can be controlled at the expense of two time derivatives and the above-mentioned improved regularity of the flow map.
As a result, the scaling between space and time is necessarily one to two at the pure tangential level.
For the energy estimates with at least two time derivatives, the top order term involving the Jacobian and cofactor matrix can be handled by using integration by parts in space and time.
The fact that $DJ$ and $\pt v$ are of the same regularity is essential to close the estimates.
Consequently, the scaling between space and time becomes one to one when at least two time derivatives are involved.
The normal derivative estimate of the solution is obtained by the elliptic regularity, which requires divergence, curl, time, and tangential components of the solution.

Another major difficulty lies in the energy estimate with pure time derivatives.
The boundary integrals of the tangential component of the flow map cannot be controlled directly by the normal component or the trace lemma.
Instead, we resort to the momentum equation restricted to the boundary (see \eqref{EQLJ}) and show that the normal component dominates the tangential component on the free boundary, at least for a short time interval due to the closeness of the cofactor matrix to its initial state.

The paper is organized as follows.
In Section~\ref{sec02}, we introduce the Lagrangian formulation of the free boundary problem and the assumptions on the domain and initial data.
Also, we state the main result, Theorem~\ref{T01}.
The a~priori estimate needed for the boundedness of the solution, stated in Proposition~\ref{Lapriori}, is proven in Section~\ref{sec08}.
In Sections~\ref{sec03}--\ref{sec08},
we restrict ourselves to the case $\gamma=2$ in \eqref{Euler3} for the equation of state, while in Section~\ref{sec09} we explain the modifications needed for the general case $\gamma > 1$.
In Section~\ref{sec03}, we provide curl estimates which shall be needed when using the div-curl lemma.
In Section~\ref{sec04}, we derive energy estimates of the tangential, time, and divergence components of the solution, while
in Section~\ref{sec05}, we provide the normal derivative estimates of the solution using the elliptic regularity.
In order to close the estimate,
we establish the improved regularity of the Lagrangian flow map and the Jacobian in Section~\ref{sec06} and the improved regularity of the curl in Section~\ref{sec07}.
At last, in Appendix~\ref{secA}, we recall some auxiliary lemmas used in the proof.

\startnewsection{Lagrangian formulation and the main result}{sec02}
\subsection{Lagrangian formulation}
The flow 
map $\eta(t,\cdot) \colon x\mapsto \eta (t,x) \in \Omega(t)$ associated with the fluid velocity, defined as a solution to
\begin{align*}
	&
	\pt \eta (t,x)= u (t, \eta(t,x))
	\comma
	t>0
	\comma
	x\in \Omega,
	\\
	&
	\eta(0, x)
	=
	x
	\comma
	x\in \Omega
	,
\end{align*}
denotes the location of a particle at time $t$ that is initially placed at a Lagrangian label $x$. 
We define the following Lagrangian quantities:
\begin{align*}
	&
	v(t,x)= u(t, \eta(t,x))
	\indeq \text{(Lagrangian~velocity)}
	,
	\\
	&
	f(t,x)= \rho(t, \eta(t,x))
	\indeq \text{(Lagrangian~density)}
	,
	\\
	&
	A
	=
	[D\eta]^{-1}
	\indeq \text{(inverse~of~deformation~tensor)}
	,
	\\
	&
	J= \det D\eta
	\indeq \text{(Jacobian~determinant)}
	,
	\\
	&
	a= J A
	\indeq \text{(transpose~of~cofactor~matrix)}
	.
\end{align*}
The notation $F,_k$ is used to represent $\partial F / \partial x_k$, i.e., the partial derivative of $F$ with respect to the Lagrangian variable $x_k$.
We adopt the Einstein's summation convention for repeated indices.
The Latin indices $i,j,k,l,m, r,s$ are summed from $1$ to $3$, while the Greek indices $\alpha, \beta$ are summed  from $1$ to $2$.
Without loss of generality, we set the constants $K_1=K_2=1$ in \eqref{Euler3}. 
Namely, the equation of state reads
\begin{align}
	p(\rho)
	=
	\rho^\gamma 
	-
	1
	.
	\label{EQ101}
\end{align}
Using the Lagrangian quantities, the equations \eqref{Euler1}--\eqref{Euler4} can be rewritten in a fixed domain $\Omega$ as
\begin{align}
	&
	f_t + fA^j_i v^i,_j=0
	\inon{in $[0, T] \times \Omega$}
	,
	\label{Euler5}
	\\
	&
	fv_t^i + A^k_i f^\gamma,_k = 0
	\inon{in $[0, T] \times \Omega$}
	,
	\label{Euler6}
	\\
	&
	f^\gamma
	=
	1
	\inon{on $[0, T] \times \Gamma$},
	\label{Euler10}
	\\
	&
	(\eta, v, f) 
	= (Id, u_0, \rho_0)
	\inon{in $\{t=0\} \times \Omega$}
	,
	\label{Euler8}
\end{align}
where $\Gamma= \Gamma(0)$ denote the initial vacuum free boundary.
Since $J_t = JA^j_i v^i,_j$ and $J(0)=1$, it follows that
\begin{align}
	f= \rho_0 J^{-1},
	\label{EQ560}
\end{align} 
which indicates that the initial density can be viewed as a parameter in the Euler equations.
Thus, using \eqref{EQ560}, the system \eqref{Euler5}--\eqref{Euler8} becomes
\begin{align}
	&
	\rho_0 v^i_t 
	+
	a^k_i (\rho_0^\gamma J^{-\gamma}),_k 
	= 0
	\inon{in~$[0, T]\times \Omega$}
	,
	\label{EQ01}
	\\
	&
	\rho_0 J^{-1}
	=
	1
	\inon{on~$ [0, T]\times \Gamma $}
	,
	\label{EQ150}
	\\
	&
	(\eta, v) 
	= (Id, u_0)
	\inon{in~$\{t=0\} \times \Omega$}
	.
	\label{EQ02}
\end{align}
Since $\rho_0 = 1$ on $[0, T]\times \Gamma $, it follows that
\begin{align*}
	J=1
	\inon{on~$[0,T] \times \Gamma$}
	.
\end{align*}
From \eqref{EQ01} we write equivalently
\begin{align}
	v_t^i
	+
	\gamma (\rho_0 J^{-1})^{\gamma-2}
	A^k_i (\rho_0 J^{-1}),_k
	=
	0
		\inon{in $[0, T]\times \Omega$}
	\label{EQ04}
\end{align}
and
\begin{align}
	v^i_t 
	-	\gamma (\rho_0 J^{-1})^{\gamma-2} \rho_0
	J^{-3}
	 a^k_i J,_k
	+
		\gamma (\rho_0 J^{-1})^{\gamma-2}
	\rho_0,_k
	a^k_i J^{-2}
	= 0
		\inon{in $[0, T]\times \Omega$}
	.
	\label{EQ200}
\end{align}
The three equivalent equations \eqref{EQ01} and \eqref{EQ04}--\eqref{EQ200} are being used for different purposes: 
\eqref{EQ01} is used for the energy estimates, \eqref{EQ04} is used for estimates of the vorticity, while \eqref{EQ200}  is used for the Jacobian and the normal derivative estimates.

\subsection{The reference domain}
To avoid the use of local coordinates charts, we assume the initial domain $\Omega \subset \mathbb{R}^3$ at time $t=0$ is given by 
\begin{align*}
	\Omega
	=
	\{
	(x_1, x_2, x_3):
	(x_1, x_2) \in \mathbb{T}^2, x_3 \in (0,1)
	\}
	,
\end{align*}
where $\mathbb{T}^2$ denotes the $2$-dimensional torus with length $1$.
At time $t=0$, the reference vacuum boundary is the top boundary $\Gamma= \{x_3 = 1\}$, 
while $N=(0,0,1)$ denotes the outward unit normal vector to $\Gamma$.
The reference bottom boundary $\{x_3 = 0\}$ is fixed with the boundary condition 
\begin{align*}
	u^3 = 0
	\inon{on $[0,T]\times \{x_3=0\}$.}	
\end{align*}
The moving vacuum free boundary is given by 
$
	\Gamma (t) 
	= 
	\{ \eta(t, x_1, x_2, 1): (x_1, x_2)\in \mathbb{T}^2\}
$.

\subsection{Notations}
Given a vector field $F$ over $\Omega$, we use $DF$, $\dive F$, and $\curl F$ to denote its full gradient, divergence, and curl.
Namely, we denote by
\begin{align*}
	&
	[DF]_j^i
	=
	F^i,_j
	\\
	&
	\dive F
	=
	F^r,_r
	\\
	&
	[\curl F]^i
	=
	\epsilon_{ijk}
	F^k,_j
	,
\end{align*}
where $\epsilon_{ijk}$ represents the Levi-Civita symbol.
We denote the Lie derivatives along the trajectory map $\eta$ by
\begin{align*}
	&
	[D_\eta F]^i_r
	=
	A^s_r F^i,_s
	\\
	&
	\dive_\eta F
	=
	A^s_r F^r,_s
	\\
	&
	[\curl_\eta F]^i
	=
	\epsilon_{ijk}
	A^s_j F^k,_s
	,
\end{align*}
which correspond to Eulerian full gradient, divergence, and curl represented in Lagrangian coordinates.
The symbol $\bp$ is used for tangential derivative $\bp = (\partial_1, \partial_2)$, while $D = (\partial_1, \partial_2, \partial_3)$ stands for full spatial derivative.

For integers $k\geq 0$, we denote by $H^k (\Omega)$ the standard Sobolev spaces of order $k$ with corresponding norms $\Vert \cdot \Vert_k$.
For real number $s\geq 0$, the Sobolev spaces $H^s (\Omega)$ and the norms $\Vert \cdot \Vert_s$ are defined by interpolation.
The $L^p$ norms on $\Omega$ are denoted by $\Vert \cdot\Vert_{L^p (\Omega)}$ or simply $\Vert \cdot \Vert_{L^p}$ when no confusion can arise.
The negative-order Sobolev spaces $H^{-s} (\Omega)$ are defined via duality, namely,
\begin{align*}
	H^{-s} (\Omega)
	:=
	(H^s (\Omega))'
	.
\end{align*}
Note that in our configuration the boundary $\partial \Omega$ has two parts: the reference free boundary $\Gamma=\{x_3=1\}$ and the fixed bottom boundary $\{x_3=0\}$.
We shall work on the Sobolev spaces on the reference free boundary $\Gamma$ and use the notations $H^s (\Gamma)$ and $|\cdot|_s$ for the Sobolev spaces and norms, with $s\geq 0$.
The negative-order Sobolev spaces associated to the boundary are defined analogously as the domain $\Omega$, while
the $L^p$ norms on $\Gamma$ are denoted by $\Vert \cdot\Vert_{L^p (\Gamma)}$.

\subsection{The initial density}
Under \eqref{PVC}, we assume that the initial pressure satisfies the physical sign condition 
\begin{align*}
	\frac{\partial p_0}{\partial N} 
	\le - \nu
	<0
		\inon{on~$\Gamma$}
	.
\end{align*}
From \eqref{EQ101} the above condition is equivalent to
\begin{align}
	\gamma \rho_0^{\gamma-1} 
	\frac{\partial \rho_0}{\partial N}
	\le 
	-\nu
	<0
	\inon{on~$\Gamma$}
	.
	\label{EQ113}
\end{align}
Since $\gamma >1$ and $\rho_0=1$ on $\Gamma$, the condition \eqref{EQ113} implies that $\rho_0 - 1$ grows like the distance function to the reference boundary $\Gamma$.
Throughout this paper, we assume that
\begin{align}
	\rho_0(x) 
	= 
	1 + w(x)
	,
	\inon{$x\in\bar{\Omega}$}
	,
	\label{EQ68}
\end{align}
where $w(x)=1-x_3$ is the distance function to the reference free boundary.
It is clear that the Rayleigh-Taylor sign condition \eqref{EQ113} is satisfied for any $\gamma >1$.
We introduce the decomposition $
\rho_0^\gamma (x)
= 
\rhog^\gamma (x) + \rhol^\gamma (x)
$, where
\begin{align}
	\rhog (x)
	=
	((1+\dd (x))^\gamma 
	-
	(1
	+
	\frac{\gamma\dd (x)}{2}))^{\frac{1}{\gamma}}
	,
	\inon{$x\in\bar{\Omega}$}
	\label{EQ184}
\end{align}
and
\begin{align}
	\rhol (x)
	=
(	1
	+
	\frac{\gamma\dd (x)}{2})^{\frac{1}{\gamma}}
		,
	\inon{$x\in\bar{\Omega}$}
	.
	\label{EQ185}
\end{align}
It is readily checked that $\rhog(x) \colon \bar{\Omega} \to [0, \infty)$ is well-defined and $\rhog = 0$ on $\Gamma$.
Moreover, we have that $\rhol \geq 1$ for $x\in \bar{\Omega}$.
The $\rhog$ part of the density is degenerate along the boundary, while the $\rhol$ part is uniformly bounded from below.

\subsection{The Jacobian and cofactor matrix}
We recall the following differentiation identities:
\begin{align}
	&
	\partial_t J 
	=
	a^s_r v^r,_s
	,
	\label{EQ20}
	\\
	&
	\partial_t a^k_i
	=
	 J^{-1} (a^s_r a^k_i - a^s_i a^k_r) v^r,_s
	.
	\label{EQ15}
\end{align}
Using \eqref{EQ20}--\eqref{EQ15} and the identity $a= JA$, we obtain
\begin{align}
	\partial_t A^k_i
	=
	-A^s_i A^k_r v^r,_s
	.
	\label{EQ21}
\end{align}
Note that the identities \eqref{EQ20}--\eqref{EQ21} become spatial-differentiation formulas by replacing $v^r,_s$ with $D \eta^r,_s$ on the right hand side, where $D = \partial_l$ with $l=1,2,3$.
Namely, we have
\begin{align}
	&
	D J 
	=
	a^s_r D
	\eta^r,_s
	,
	\label{EQ60}
	\\
	&
	D a^k_i
	=
	J^{-1} (a^s_r a^k_i - a^s_i a^k_r) D 
	\eta^r,_s
	,
	\label{EQ61}
	\\
	&
	D A^k_i
	=
	-A^s_i A^k_r D
	\eta^r,_s
	.
	\label{EQ62}
\end{align}
The cofactor matrix satisfies the Piola identity which reads
\begin{align}
	{a^k_i},_k
	=
	0
	,
	\label{piola}
\end{align}
for $i=1,2,3$.
By the definition of the cofactor matrix, we have that
\begin{equation}
	\begin{bmatrix}
		a^3_1 & a^3_2 &a^3_3
	\end{bmatrix}
	=
	\begin{bmatrix}
		\eta^2,_1 \eta^3,_2 - \eta^3,_1 \eta^2,_2
		&
		\eta^3,_1 \eta^1,_2 -\eta^1,_1 \eta^3,_2
		&
		\eta^1,_1 \eta^2,_2 - \eta^1,_2 \eta^2,_1
	\end{bmatrix}
	\label{EQ70}
	.
\end{equation}
Note that only tangential derivatives appear in each entry on the right hand side, which shall play an essential role in our analysis.

\subsection{Main result}
The system \eqref{EQ01}--\eqref{EQ02} admits a conserved physical energy 
\begin{align*}
	\mathcal{E}(t)
	=
	\int_\Omega 
	\left(
	\frac{1}{2}
	\rho_0 |v(t)|^2
	+
	\frac{1}{\gamma-1}
	\rho_0^\gamma J^{1-\gamma} (t)
	+
	J(t)
	\right)
	dx
	.
	\llabel{E:ENERGY1}
\end{align*}
A direct computation shows that formally $\frac{d}{dt} \mathcal{E} (t) = 0$, assuming sufficient differentiability of the solution. 
However, it is too weak to control the regularity of the evolving free boundary.
Instead, we introduce the higher order energy functional 
\cole
\begin{align}
	\begin{split}
	E(t )
	&
	=
	\Vert v\Vert_4^2
		+
	\Vert \pt v\Vert_3^2
	+
	\Vert \pt J\Vert_4^2
	+
	\Vert \bp^3 J\Vert_{1.5}^2
	+
	\Vert \bp^3 \eta\Vert_{1.5}^2
	+
	\Vert \bp^3 \curl_\eta v\Vert_{0.5}^2
		\\&\indeq
	+
	\sum_{l=2}^4
	\Vert \bp^{4-l} \pt^l \eta\Vert_{1.5}^2
	+
	\sum_{l=2}^5
	(
		\Vert \rhog^{\frac{\gamma}{2}} \bp^{5-l} \pt^l D\eta\Vert_0^2
		+
	\Vert \pt^l v\Vert_{5-l}^2
	+
	\Vert \pt^l J\Vert_{5-l}^2
	+
	\vert  \pt^l \eta\vert_{5-l}^2
	)
	+1
	.
	\label{EQfun}
\end{split}
 \end{align}
\colb
Note that the time derivatives at $t=0$ are defined iteratively by differentiating \eqref{EQ04} and evaluating at $t=0$.

The following theorem is our main result.
\cole
\begin{Theorem}
\label{T01}
Let $\gamma> 1$.
Suppose that $(\eta(t), v(t))$ is a smooth solution of \eqref{EQ01}--\eqref{EQ02} on some time interval $[0,T_0]$ and the initial data satisfies $E (0)<\infty$.
Then there exists a sufficiently small constant $T \in (0,T_0)$ such that $\rho (t) \geq 1/2$ in $\bar{\Omega} (t)$ for $t\in [0,T]$ and
\begin{align*}
	\sup_{t \in [0,T]}
	E (t)
	\leq
	\MM
	,
	\llabel{EQ140}
\end{align*}
where $\MM>0$ is a constant depending on the initial data.
\end{Theorem}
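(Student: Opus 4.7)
The plan is to establish Theorem~\ref{T01} through a bootstrap argument on $E(t)$. Assuming a priori that $\sup_{s \in [0,T]} E(s) \leq \MM$ for $\MM$ sufficiently large depending only on $E(0)$, I would prove an estimate of the form $E(t) \leq E(0) + T\,P(\MM) + \epsilon \MM$ with $P$ polynomial and $\epsilon$ absorbable, from which choosing $\MM$ large and $T$ small closes the argument via continuity. The lower bound $\rho \geq 1/2$ on $\bar{\Omega}(t)$ follows immediately from $f = \rho_0 J^{-1}$, the fact that $\rho_0 \geq 1$ in the interior, and control of $\Vert J-1\Vert_{L^\infty} \les T\Vert \pt J\Vert_{L^\infty_t L^\infty_x}$ for small $T$. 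The detailed derivation is carried out for $\gamma = 2$ in Sections~\ref{sec03}--\ref{sec08}, and the modifications for general $\gamma > 1$ are indicated in Section~\ref{sec09}, leveraging the decomposition \eqref{EQ184}--\eqref{EQ185}.

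\textbf{Main steps.} The summands of $E(t)$ correspond to a hierarchy of sub-estimates, each obtained by differentiating \eqref{EQ01} with $\bp^k \pt^l$ and testing against $\bp^k \pt^l v$. Integration by parts in space, combined with the Piola identity \eqref{piola} and the boundary condition $\rho_0 J^{-1} = 1$ on $\Gamma$, yields two structural contributions: a degenerate interior energy $\Vert \rhog^{\gamma/2} \bp^{5-l} \pt^l D \eta\Vert_0^2$ coming from the part $\rhog$ that vanishes on $\Gamma$, and a coercive boundary quadratic form of the type $|\pt^l \eta|_{5-l}^2$ coming from $\rhol \geq 1$ together with the Rayleigh--Taylor sign \eqref{EQ113}. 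The interior weighted term then feeds Section~\ref{sec06}, where the Hardy inequality (Lemma~\ref{hardy}) promotes it to the improved regularity $\Vert \bp^{4-l}\pt^l \eta\Vert_{1.5}^2$ appearing in $E$, since $\rhog$ behaves like a positive power of the distance function to $\Gamma$. Curl control is derived independently in Section~\ref{sec03} by applying $\curl_\eta$ to \eqref{EQ04}, which kills the pressure and produces a transport-type equation for the vorticity with commutator errors at the correct regularity; the improved tangential curl norm $\Vert \bp^3 \curl_\eta v\Vert_{0.5}^2$ is handled in Section~\ref{sec07}. Finally, Section~\ref{sec05} recovers the full normal-derivative norms $\Vert v\Vert_4$, $\Vert \pt^l v\Vert_{5-l}$, and $\Vert \pt^l J\Vert_{5-l}$ via a div-curl elliptic estimate, exploiting $\dive_\eta v = J^{-1}\pt J$ together with the tangential and time bounds.

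\textbf{Main obstacle.} The delicate step is the pure tangential top-order estimate, where differentiating the cofactor in $a^k_i(\rho_0^\gamma J^{-\gamma})_{,k}$ produces a top-order term matching the regularity of $\eta$ itself, which cannot be absorbed by standard energy methods. This is precisely the reason for placing $\bp^3 \eta$ and $\bp^3 J$ in the fractional space $H^{1.5}(\Omega)$ in the definition of $E$: that regularity is not available from the base energy alone, and is gained from the Hardy-based flow-map improvement described above, at the cost of a $1{:}2$ scaling between space and time at the pure tangential level. When two or more time derivatives are present, the scaling improves to $1{:}1$ because $DJ$ and $\pt v$ share regularity and the offending top-order term can be integrated by parts in both space and time. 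Finally, the pure time-derivative boundary contributions of the tangential components of $\pt^l \eta$ are not controllable by the trace theorem or by the normal component alone; these I would handle using the Euler momentum equation restricted to $\Gamma$, which, exploiting the closeness of $a$ to $\id$ for small $T$, shows that the normal component dominates the tangential component on a short time interval, closing the estimate.
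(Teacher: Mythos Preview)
Your proposal is correct and follows essentially the same route as the paper: the bootstrap/continuity argument built on the a~priori inequality of Proposition~\ref{Lapriori}, with the density decomposition \eqref{EQ184}--\eqref{EQ185} splitting the top-order cofactor term into a weighted interior piece (upgraded via Hardy to the $H^{1.5}$ flow-map regularity) and a coercive boundary piece, curl control from \eqref{EQ52}, normal derivatives via the div-curl elliptic estimate, and the boundary relation \eqref{EQLJ} to dominate tangential by normal components in the pure-time case. Your identification of the $1{:}2$ versus $1{:}1$ space-time scaling and of the lower bound $\rho\geq 1/2$ via $f=\rho_0 J^{-1}$ and $\Vert J-1\Vert_{L^\infty}\les T$ also matches the paper exactly.
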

\colb

In this paper we restrict ourselves to derive a~priori estimates and thus a smooth solution is assumed to be given.
As a result, there is no need to state the compatibility conditions for the initial data.
Nevertheless, we refer the reader to \cite{CHS} where the solution is constructed using the tangentially smoothed approximate system.

We denote by $M_0 = P(E (0))$ some constant depending on the initial data, which may vary from line to line.
Throughout this paper, the notation $A\les B$  means that $A \leq C B$ for some constant $C>0$ which depends on~$M_0$ and $\gamma$.
The notation $c_{l,m} \in \mathbb{R}$ stands for some constant that depends on $l,m\in \mathbb{N}$, which may vary from line to line. In fact, the true value of $c_{l,m}$ is not essential in our analysis.

The a~priori estimate needed to prove Theorem~\ref{T01} is the following. 
\cole
\begin{Proposition}
\label{Lapriori}
Let $\delta \in (0,1)$.
There exists a constant $T >0$ and
a nonnegative continuous function $P$ such that 
\begin{align}
	E(t)
	\les
	1+\delta \sup_{t \in [0,T]}E(t)
	+
	C_\delta tP(\sup_{t \in [0,T]}E(t))
	,
	\label{EQ498}
\end{align}
for all $t \in [0,T]$, where $C_\delta>0$ is a constant that depends on $\delta$.
We emphasize that the implicit constant used in \eqref{EQ498} is independent of $\delta$.
\end{Proposition}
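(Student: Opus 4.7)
The plan is to estimate each summand of $E(t)$ in \eqref{EQfun} separately, writing $E(t) = E(0) + \int_0^t \frac{d}{ds} E(s)\,ds$ and bounding $\frac{d}{ds}E$ by a polynomial of the energy. Lower order contributions integrate against $t$ to produce the $C_\delta t P(\sup_{[0,T]} E)$ term, while top order terms that resist direct absorption are handled with Young's inequality to generate the $\delta \sup E$ factor, independently of $\delta$. The three equivalent momentum equations \eqref{EQ01}, \eqref{EQ04}, and \eqref{EQ200} are used for their announced purposes: \eqref{EQ01} for the tangential and time energy estimates, \eqref{EQ04} for propagating the curl, and \eqref{EQ200} for the Jacobian and the normal-derivative elliptic estimate.

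First, we perform the pure tangential energy estimate by applying $\bp^3$ to \eqref{EQ01}, testing with $\bp^3 v^i$, and integrating over $\Omega$. After integration by parts on $a^k_i (\rho_0^\gamma J^{-\gamma}),_k$, the decomposition \eqref{EQ184}--\eqref{EQ185} splits the resulting quadratic form into two pieces. The $\rhol$-component, bounded below by $1$, combined with the Rayleigh--Taylor sign condition \eqref{EQ113} and the Piola identity \eqref{piola}, produces a coercive boundary term controlling $\vert \bp^3 \eta\cdot N\vert_{1/2}^2$; the $\rhog$-component yields the weighted bulk contribution $\Vert \rhog^{\gamma/2} \bp^3 D\eta\Vert_0^2$ already present in $E(t)$. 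For energies with at least two time derivatives, where $DJ$ and $\pt v$ sit at the same regularity level, one integrates by parts in both space and time and processes the commutators using \eqref{EQ15} and \eqref{EQ21}; the top-order error is either absorbed by a $\delta$-factor or reduced to a time integral of lower-order quantities.

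The improved regularity terms $\Vert \bp^3 \eta\Vert_{1.5}^2$ and $\Vert \bp^3 J\Vert_{1.5}^2$ are recovered as follows. The Hardy inequality applied to the weighted bulk norm $\Vert \rhog^{\gamma/2} \bp^3 D\eta\Vert_0^2$ converts the $\rhog$-weight, which vanishes linearly at $\Gamma$, into a gain of half a derivative in the normal direction, yielding the $H^{1/2}$ normal regularity of $\bp^3 \eta$; together with the $H^{1}$ tangential regularity encoded in the coercive boundary term above, this gives $\Vert \bp^3 \eta\Vert_{1.5}$. The companion bound on $\bp^3 J$ follows from the differentiation identity \eqref{EQ60} and the $L^\infty$ control of $a$. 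The curl term $\Vert \bp^3 \curl_\eta v\Vert_{0.5}^2$ is obtained by applying $\curl_\eta$ to \eqref{EQ04}, in which the pressure gradient drops out, producing an evolution equation for $\curl_\eta v$ whose forcing involves only $\pt A$ via \eqref{EQ21} and hence is of lower order. Finally, $\Vert v\Vert_4^2$ and $\Vert \pt v\Vert_3^2$ follow by a Hodge-type div-curl-trace elliptic estimate, since $\dive_\eta v$ via \eqref{EQ20} and \eqref{EQ560}, $\curl_\eta v$, $\pt v$, and the tangential traces have all been bounded.

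The principal obstacle is the pure time-derivative contribution $\vert \pt^l \eta\vert_{5-l}^2$ on the boundary: the trace inequality does not allow the tangential components of $\pt^l \eta\restr_\Gamma$ to be controlled by the normal component, yet only the normal component appears coercively in the boundary energy produced by integration by parts. To resolve this, one restricts \eqref{EQ200} to $\Gamma$, where $\rho_0 = 1$ and $J = 1$, and differentiates $\pt^{l-1}$ times in time; the leading boundary coefficient is a multiple of $\rho_{0,3}\, a^3_i$, which by \eqref{EQ113} is bounded below in the normal direction $N = (0,0,1)$. Since $a^k_i$ remains $\mathcal{O}(t)$-close to $\delta^k_i$ on the chosen short time interval, this relation forces $\pt^l v\restr_\Gamma$ to be dominated by its $N$-component up to \lot, and integrating in time transfers this dominance to $\pt^l \eta$ itself, producing $\vert \pt^l \eta\vert_{5-l}^2 \lesssim \vert N\cdot \pt^l \eta\vert_{5-l}^2 + \lot$. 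The right-hand side is then absorbed by the coercive boundary term from the tangential estimate, and shrinking $T$ to accommodate the $\delta$-absorptions at the top order yields \eqref{EQ498}.
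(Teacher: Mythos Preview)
Your overall architecture---energy estimates via \eqref{EQ01}, curl propagation via \eqref{EQ04}, normal derivatives via \eqref{EQ200}, and the boundary relation \eqref{EQLJ} for the tangential components of $\pt^l\eta$---matches the paper. However, the mechanism you propose for the improved regularity $\Vert\bp^3\eta\Vert_{1.5}$ and $\Vert\bp^3 J\Vert_{1.5}$ does not work. You claim to get $\Vert\bp^3\eta\Vert_{1.5}$ by applying Hardy to $\Vert\rhog^{\gamma/2}\bp^3 D\eta\Vert_0$ and combining ``$H^{1/2}$ normal'' with ``$H^1$ tangential'' regularity. But (i) that weighted norm is not in $E(t)$: the weighted terms in \eqref{EQfun} all carry at least two time derivatives, and the pure tangential estimate (performed at the $\bp^4$ level, Lemma~\ref{Lenergy1}) does \emph{not} use the decomposition \eqref{EQ184}--\eqref{EQ185}; (ii) Hardy produces a full $H^{0.5}(\Omega)$ bound, not a directional gain, and one cannot glue ``$H^{1/2}$ normal'' to ``$H^1$ tangential'' to obtain $H^{1.5}$; (iii) your route from $\Vert\bp^3\eta\Vert_{1.5}$ to $\Vert\bp^3 J\Vert_{1.5}$ via \eqref{EQ60} would require $\Vert\bp^3\eta\Vert_{2.5}$, half a derivative more than you have.

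The paper's mechanism is different. The bound $\Vert\bp^3\eta\Vert_{1.5}$ comes from the div--curl elliptic estimate (Lemma~\ref{elliptic}): the divergence piece is $\Vert\bp^3 J\Vert_{0.5}$, controlled by $\Vert\pt J\Vert_4$ and the fundamental theorem of calculus; the curl from Lemma~\ref{Lcurl}; the trace $|\bp^4\eta\cdot N|_0$ from Lemma~\ref{Lenergy1}. Separately, $\Vert\bp^3 J\Vert_{1.5}$ is obtained by applying $\bp^3$ to \eqref{EQ200}, which expresses $\bp^3 J,_3$ in terms of $\pt^2\bp^3\eta$ plus lower order; \emph{this} is where Hardy enters, converting $\Vert\rhog^{\gamma/2}\bp^3\pt^2 D\eta\Vert_0$ (produced by the $\bp^3\pt^2$ energy estimate, Lemma~\ref{Lenergy2}, which \emph{does} use the decomposition) into $\Vert\bp^3\pt^2\eta\Vert_{0.5}$. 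The essential trade is thus two time derivatives for half a space derivative, mediated by the momentum equation and Hardy---not a direct Hardy gain on the undifferentiated-in-time flow map.
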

\colb

For the proof of Theorem~\ref{T01} given Proposition~\ref{Lapriori}, we take $\delta>0$ sufficiently small and use the Gronwall inequality and the standard continuity argument.
Throughout Sections~\ref{sec03}--\ref{sec08}, we set $\gamma=2$ in \eqref{EQ01}, \eqref{EQ04}--\eqref{EQ200}, \eqref{EQ184}--\eqref{EQ185}, and \eqref{EQfun}.
We shall prove that the a~priori bound \eqref{EQ498} in Proposition~\ref{Lapriori}, thus completing the proof of Theorem~\ref{T01}.
In proving Proposition~\ref{Lapriori}, we assume the following:
\begin{enumerate}[label=(\roman*)]
	\item 	$\frac{1}{2} \leq  J\leq \frac{3}{2}$,
	\item $\Vert \eta \Vert_4 \leq C + T \Vert \pt \eta \Vert_4 \leq C$,
	\item $\Vert J \Vert_4 \leq C + T \Vert \pt J\Vert_4 \leq C$,
	\item $\Vert \pt J\Vert_3 \leq C + T \Vert \pt^2 J\Vert_3
	\leq C$,
	\item $\Vert \pt^2 v\Vert_2 \leq C +T \Vert \pt^3 v\Vert_2 \leq C$,
	\item  $	\Vert A- I_3 \Vert_{H^2}+
	\Vert a  - I_3\Vert_{H^2}
		\leq CT$,
\end{enumerate}
on $[0,T] \times \Omega$ for some constant $C>0$, where $I_3$ is the three-dimensional identity matrix. 
The above assumptions can be justified by using the fundamental theorem of calculus and taking $T>0$ sufficiently small, after we establish the a~priori bounds.

\startnewsection{Curl estimates}{sec03}
The following lemma provides the curl estimates of the solutions $\eta$ and $v$.
\begin{Lemma}
\label{Lcurl}
For $\delta\in (0,1)$, we have
\begin{align}
	\begin{split}
		&
		\sup_{t\in [0,T]}
		\sum_{l=2}^5
		\Vert \rhog
		\bp^{5-l} \pt^{l} \curl  \eta (t) \Vert_0^2
		+
		\sup_{t\in [0,T]}
		\sum_{l=2}^4
		\left(
		\Vert \pt^{l} D^{4-l}\curl v (t)\Vert_0^2
		+
		\Vert \bp^{4-l} \pt^l \curl \eta (t)\Vert_{0.5}^2
		\right)
		\\&\indeq
		+
		\sup_{t\in [0,T]}
		(\Vert \bp^3 \curl \eta (t)\Vert_{0.5}^2
		+
		\Vert D^3 \curl v (t)\Vert_0^2)
		\les
		C_\delta
		+
		\delta \sup_{t\in [0,T]} E(t)
		+
		C_\delta T P(\sup_{t\in [0,T]} E(t))
		.
		\label{EQ95}
	\end{split}
\end{align}
where $C_\delta>0$ is a constant depending on $\delta$.
\end{Lemma}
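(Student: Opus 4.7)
The plan is to derive a Lagrangian-curl transport equation for $v$ and then harvest the $T$-smallness by integrating in time. The momentum equation \eqref{EQ04} can be rewritten as $v_t^i = -A^k_i h,_k$ with the enthalpy $h = \frac{\gamma}{\gamma-1}(\rho_0 J^{-1})^{\gamma-1}$, so its right-hand side is a Lagrangian gradient. Consequently, applying $[\curl_\eta F]^i = \epsilon_{ijk} A^s_j F^k,_s$ kills the leading-order part and leaves only commutator contributions in which $A^s_j$ fails to commute with $\pa_s$. Combining this with \eqref{EQ21} for $\pt A$, a direct computation yields a schematic transport equation of the form
\begin{align*}
\pt [\curl_\eta v]^i = Q^i(A, D\eta, Dv, Dh),
\end{align*}
where $Q^i$ is a sum of quadratic products whose factors are each at least one spatial derivative below the top of the energy $E$.

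Integrating in time yields $[\curl_\eta v](t) = [\curl u_0] + \int_0^t Q\, ds$, and the Eulerian curl of $v$ is recovered via the commutator identity $\curl v - \curl_\eta v = \epsilon_{ijk}(\delta^s_j - A^s_j) v^k,_s$, whose right-hand side is $O(T)$ in $H^2$ by the bootstrap assumption (vi). For $\curl \eta$ the situation is even cleaner, since $\pt \curl \eta = \curl v$ and $\curl \eta(0) = 0$ give
\begin{align*}
\curl \eta(t) = \int_0^t \curl v\, ds,
\end{align*}
so every tangential or time derivative of $\curl \eta$ automatically gains a factor of $t$ relative to the same derivative of $\curl v$.

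Applying the operators $\bp^{4-l} \pt^l$, $D^{4-l} \pt^l$, $\bp^3$, and $D^3$, together with the appropriate fractional norms, to these integral identities reduces the left-hand side of \eqref{EQ95} to an initial-data contribution, which produces the $1$ in the bound, plus a time integral whose integrand is of the form $\bp^{4-l} \pt^l Q$ (or analogue) and is bounded by $P(\sup_{t\in[0,T]} E(t))$ via the Moser-type estimates from Appendix~\ref{secA} and the bootstrap assumptions (i)--(vi). The time integration then supplies the $T P(\sup_{t\in[0,T]} E(t))$ contribution. The weighted estimates $\|\rhog \bp^{5-l} \pt^l \curl \eta\|_0$ for $l = 2, \ldots, 5$ are handled along the same lines, distributing the weight $\rhog$ under the time integral of $\curl v$ and invoking the weighted term $\|\rhog \bp^{5-l}\pt^l D\eta\|_0$ already present in $E$ for $\gamma = 2$.

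The main obstacle I expect is the top-order fractional estimates $\|\bp^3 \curl \eta\|_{0.5}$, $\|\bp^{4-l}\pt^l \curl \eta\|_{0.5}$, and $\|D^3 \curl v\|_0$: when three derivatives are distributed across the quadratic nonlinearity $Q$ by Leibniz, the naive choice places every derivative on the least regular factor and exceeds the regularity supplied by $E$. The remedy is to keep at most one factor at maximal order, estimating the remaining factor in $L^\infty$ or $H^2$ via the Sobolev embedding. Any residual top-order contribution of the shape $(A - I) \cdot D^{\text{top}}(\cdots)$ is then absorbed by the explicit smallness $\|A - I\|_{H^2} \le CT$ from assumption (vi), which is precisely what lets \eqref{EQ95} tolerate a right-hand side of the form $1 + TP(E)$ rather than an unweighted $P(E)$.
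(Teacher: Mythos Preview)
Your overall strategy matches the paper's: derive $\curl_\eta v_t = 0$, integrate in time to get $\curl_\eta v(t) = \curl u_0 + \int_0^t Q_0$, pass to the Eulerian curl via the $(A - I)Dv$ commutator, and then distribute derivatives. Two points, one minor and one substantive.

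First, your description of the transport equation is slightly off: applying $\curl_\eta$ to $v_t^i = -A^k_i h,_k$ gives \emph{exactly} zero, not ``leading-order zero plus commutators in which $A^s_j$ fails to commute with $\partial_s$.'' The nontrivial right-hand side $Q_0$ comes from the commutator of $\partial_t$ with $\curl_\eta$, i.e.\ from $\epsilon_{kji}(\partial_t A^s_j)v^i,_s$, and via \eqref{EQ21} it depends only on $A$ and $Dv$, not on $Dh$. This doesn't change the estimates materially, but it simplifies the bookkeeping.

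Second, and more importantly, your proposed remedy for the fractional estimate $\Vert\bp^3\curl\eta\Vert_{0.5}$ does not close. After writing $\curl\eta = \int_0^t\curl v$ and expanding $\curl v = \curl_\eta v + \epsilon_{ijk}(\delta^s_j - A^s_j)v^k,_s$, the top-order Leibniz term is $\int_0^t(A - I)\bp^3 Dv\,ds$. Multiplying by $\Vert A - I\Vert_{H^2}\les T$ as you suggest still leaves you needing $\Vert\bp^3 Dv\Vert_{0.5}\sim\Vert v\Vert_{4.5}$, which is \emph{not} in $E$ (only $\Vert v\Vert_4$ is). The paper's fix is to integrate by parts in time inside the $s$-integral, converting $\bp^3 Dv$ to $\bp^3 D\eta$ at the cost of a $\partial_t A$ factor and a boundary term; then $\Vert\bp^3 D\eta\Vert_{0.5}\le\Vert\bp^3\eta\Vert_{1.5}$, which \emph{is} in $E$. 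The same time-integration-by-parts is needed for the $\bp^3 Q_0$ contribution in the double time integral. Once you add this step, the rest of your outline goes through as written.
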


\begin{proof}[Proof of Lemma~\ref{Lcurl}]
We start with the estimates of $\Vert  \pt^{l} D^{4-l} \curl v\Vert_0^2$ for $l=2, 3, 4$.
Applying the Lagrangian curl to \eqref{EQ04}, we get
\begin{align}
		\curl_\eta v_t
	=
	0
	.
	\label{EQ52}
\end{align}
From \eqref{EQ21} and \eqref{EQ52} it follows that 
\begin{align}
	[\curl_\eta v]^k_t
	=
	\epsilon_{kji} (A^s_j)_t v^i,_s
	=
	-
	\epsilon_{kji}
	A^m_j A^s_r v^r,_m
	v^i,_s
	=: Q_0(A, Dv)
	,
	\label{EQ55}
\end{align}
where $Q_0$ is a quadratic function of $A$ and $Dv$. 
In \eqref{EQ55} and below,
we use $[F]^k$ to denote the $k$-th component of a vector field $F$.
Using the fundamental theorem of calculus, we obtain
\begin{align}
	[\curl_\eta v (t)]^k
	= 
	[\curl u_0]^k
	+
	\int_0^t 
	Q_0(A(t'), Dv(t'))
	dt'
	.
	\label{EQ53}
\end{align}
Fix $l \in \{2,3,4\}$.
Applying $\pt^{l} D^{4-l}$ to the above equation, we obtain
\begin{align*}
	\begin{split}
		[\curl_\eta \pt^{l} D^{4-l} v (t)]^k
	&
	= 
	\sum_{m=0}^{4-l}
	\sum_{\substack{n=0\\ m+n\geq 1}}^l
	c_{m,n}
	\epsilon_{kji} 
	\pt^n D^m  A^s_j 
	\pt^{l-n} 	D^{4-l-m}  v^i,_s
	+
	\pt^{l-1} D^{4-l}
	Q_0(A, Dv)
	.
	\llabel{EQ653}	
	\end{split}
\end{align*}
Inserting the identity $ [\curl_\eta \pt^{l} D^{4-l} v]^k =  [\pt^{l} D^{4-l} \curl v]^k + \epsilon_{k ji} \pt^{l} D^{4-l} v^i,_s \int_0^t (A^s_j)_t dt'$ to the above equation, we obtain
\begin{align*}
	\begin{split}
 	[\pt^{l} D^{4-l} \curl v]^k
	&
	= 
	\epsilon_{jki} \pt^{l} D^{4-l} 
	v^i,_s \int_0^t (A^s_j)_t dt'
	+
	\sum_{m=0}^{4-l}
	\sum_{\substack{n=0\\ m+n\geq 1}}^l
	c_{m,n}
	\epsilon_{kji} 
	\pt^n D^m  A^s_j 
	\pt^{l-n} 	D^{4-l-m}  v^i,_s
	\\&\indeq
	+
	\pt^{l-1} D^{4-l}
	Q_0(A, Dv)
	,
	\llabel{EQ655}
	\end{split}
\end{align*}
from where
\begin{align*}
	\begin{split}
	\Vert \pt^{l} D^{4-l} \curl v \Vert_0^2
	&
	\les
\underbrace{	\Vert
	\pt^{l} D^{5-l} v 
	(A-I_3)
	\Vert_0^2}_{\KK_1}
	+
\underbrace{	\Vert
	\pt^{l-1} D^{4-l}
	Q_0(A, Dv)
	\Vert_0^2}_{\KK_2}
		+
	\underbrace{	\Vert
		\pt^l D^{4-l}  A
		 D v
		\Vert_0^2}_{\KK_3}
		\\&\indeq
	+
	\underbrace{\sum_{m=0}^{4-l}
	\sum_{\substack{n=0\\ 3\geq m+n\geq 1}}^l
	\Vert
	\pt^n D^m  A
	\pt^{l-n} 	D^{4-l-m}  D v
	\Vert_0^2}_{\KK_4}
	.
	\llabel{EQ656}	
	\end{split}
\end{align*}
The term $\KK_1$ is estimated using the H\"older and Sobolev inequalities as
\begin{align*}
	\KK_1
	\les
	\Vert \pt^l D^{5-l} v\Vert_0^2
	\Vert A- I_3 \Vert_{L^\infty}^2
	\les
	TP(\sup_{t \in [0,T]} E(t))
	,
\end{align*}
since $\Vert \pt^l v\Vert_{5-l}^2 \leq E(t)$ for $l=2,3,4$ and $\Vert A- I_3\Vert_2 \les T$.
The highest order term (in terms of derivative counts on $\eta$) in $\KK_2$ can be written as
\begin{align*}
	- \epsilon_{kji}
	(A^m_j A^s_r v^r,_m 
	\pt^{l-1} D^{4-l}  v^i,_s 
	+
	A^m_j A^s_r  v^i,_s 
	\pt^{l-1} D^{4-l}  v^r,_m 
	)
	.
	\llabel{EQ642}
\end{align*}
Using the fundamental theorem of calculus, we obtain
\begin{align*}
	\Vert
	\pt^{l-1} D^{5-l} v 
	\Vert_0^2
	\les
	\Vert
	\pt^{l-1} D^{5-l} v (0)
	\Vert_0^2 
	+
	\int_0^T \Vert 
	\pt^{l} D^{5-l} v 
	\Vert_0^2
	\les
	1+TP(\sup_{t \in [0,T]} E(t))
	,
\end{align*}
where we used the Jensen's inequality.
From the H\"older and Sobolev inequalities it follows that  
\begin{align*}
	\Vert 	AADv
	\pt^{l-1} D^{5-l} v 
	\Vert_0^2
	\les
	1+
	TP(\sup_{t\in [0,T]} E(t))
	.
	\llabel{EQ672}
\end{align*}
where we drop the indices for simplicity.
The rest of the terms in $\KK_2$ are of lower order which can be treated in a similar fashion using the H\"older and Sobolev inequalities and the fundamental theorem of calculus.
Thus, we have
\begin{align*}
	\KK_2
	\les
	1+TP(\sup_{t \in [0,T]} E(t))
	.
\end{align*}
Similarly, the term $\KK_3$ is estimated as
\begin{align*}
	\KK_3
	\les
			\Vert
	\pt^l D^{4-l}  A
	\Vert_0^2 
	\Vert
	D v
	\Vert_{L^\infty}^2
	\les
	1+TP(\sup_{t \in [0,T]} E(t))
	.
\end{align*}
The term $\KK_4$ consists of essentially lower order terms which can be estimated analogously using the H\"older and Sobolev inequalities and the fundamental theorem of calculus, and we obtain
\begin{align*}
	\KK_4
	\les
	1+TP(\sup_{t \in [0,T]} E(t))
	.
\end{align*}
Consequently, we conclude
\begin{align}
	\sum_{l=2}^4
	\Vert \pt^{l} D^{4-l} \curl v (t)\Vert_0^2
	\les
	1+TP(\sup_{t \in [0,T]} E(t))
	.
	\label{EQ648}
\end{align}
The term $\Vert D^3 \curl v\Vert_0^2$ is treated in a similar fashion using the above arguments, and we arrive at
\begin{align}
	\Vert D^3 \curl v (t)\Vert_0^2
	\les
	1+TP(\sup_{t \in [0,T]} E(t))
	.
	\label{EQ650}
\end{align}

Now, we derive the estimates of $
\Vert \rhog
\bp^{5-l} \pt^{l} \curl  \eta \Vert_0^2$, where $l=2, 3, 4, 5$.
We rewrite \eqref{EQ53} as
\begin{align}
	[\curl \pt \eta (t)]^k
	=
	[\curl u_0]^k
	+
	\epsilon_{jki} v^i,_s
	\int_0^t
	(A^s_j)_t dt'
	+
	\int_0^t Q_0 (A, Dv) dt'
	.
	\label{EQ146}
\end{align}
Fix $l\in \{2,3,4,5\}$.
Applying $\rhog\bp^{5-l} \pt^{l-1}$ to the above equation, we arrive at
\begin{align*}
	\begin{split}
			&
	\Vert	\rhog \bp^{5-l} \pt^{l}
	 \curl \eta \Vert_0^2
	\les
\underbrace{	\Vert
	\rhog
	\bp^{5-l} \pt^{l-2}
	(Dv
	A_t)
	\Vert_{0}^2}_{\JJ_1}
		\\&\indeq
	+
	\underbrace{\Vert
	\rhog
	\bp^{5-l} \pt^{l-2}
	(\pt Dv
	(A- I_3)
	)
	\Vert_{0}^2}_{\JJ_2}
	+
\underbrace{	\Vert	\rhog 
	\bp^{5-l} \pt^{l-2}
	Q_0 (A, Dv)
	\Vert_{0}^2}_{\JJ_3}
	.
	\end{split}
\end{align*}
The highest order term in $\JJ_1$ scales like $\rhog \pt A \bp^{5-l} \pt^{l-1} D\eta$ which can be estimated using the fundamental theorem of calculus as
\begin{align*}
	\Vert \rhog \pt A \bp^{5-l} \pt^{l-1} D\eta\Vert_0^2
	\les
	\Vert \rhog \bp^{5-l} \pt^{l-1} D\eta (0)\Vert_{0}^2
	+
	\int_0^T
	\Vert \rhog  \bp^{5-l} \pt^{l} D\eta\Vert_0^2
	\les
	1+TP(\sup_{t \in [0,T]} E(t))
	,
\end{align*}
since $\Vert \rhog \bp^{5-l} \pt^l D \eta\Vert_0^2 \leq E(t)$, for $l=2,3,4,5$.
The rest of the terms in $\JJ_1$ are of lower order which can be treated in a similar fashion using the H\"older and Sobolev inequalities and the fundamental theorem of calculus.
Thus, we have
\begin{align*}
	\JJ_1
	\les
	1+TP(\sup_{t\in [0,T]} E(t))
	.
\end{align*}
The highest order term in $\JJ_2$ is estimated using the H\"older and Sobolev inequalities as
\begin{align*}
	\Vert \rhog \bp^{5-l} \pt^{l} D\eta
	(A-I_3)
	\Vert_0^2
	\les
		\Vert \rhog \bp^{5-l} \pt^{l} D\eta
		\Vert_0^2 
		\Vert A-I_3 \Vert_{L^\infty}^2
	\les
	TP(\sup_{t \in [0,T]} E(t))
	,
\end{align*}
since $\Vert A- I_3\Vert_2 \leq E(t)$.
The rest of the terms in $\JJ_2$, as well as the term $\JJ_3$, are treated analogously using the above arguments.
As a result, we get
\begin{align*}
	\JJ_2
	+
	\JJ_3
	\les
	1+TP(\sup_{t \in [0,T]} E(t))
	.
\end{align*}
Combining the above estimates, we conclude
\begin{align}
		\Vert	\rhog \bp^{5-l} \pt^{l}
	\curl \eta (t) \Vert_0^2
	\les
		1+TP(\sup_{t \in [0,T]} E(t))
		.
		\label{EQ649}
\end{align}

Next we derive the estimates of $\Vert  \bp^{4-l} \pt^l \curl \eta\Vert_{0.5}^2$ for $l=2,3,4$.
Fix $l\in \{2,3,4\}$.
Applying $\bp^{4-l} \pt^{l-1}$ to \eqref{EQ146}, we obtain
\begin{align*}
	\begin{split}
	\Vert	 \bp^{4-l} 
	\pt^l
	 \curl \eta	
	\Vert_{0.5}^2
	&\les
\underbrace{	\Vert
	\bp^{4-l} \pt^{l-2}
	(Dv
	A_t)
	\Vert_{0.5}^2}_{\II_1}
	+
	\underbrace{\Vert
	\bp^{4-l} \pt^{l-2}
	(\pt Dv
	(A- I_3)
	\Vert_{0.5}^2}_{\II_2}
		\\&\indeq
	+
	\underbrace{\Vert 
	\bp^{4-l} \pt^{l-2}
	Q_0 (A, Dv)
	\Vert_{0.5}^2}_{\II_3}
	.
	\llabel{EQ460}
	\end{split}
\end{align*}
For the term $\II_{1}$, from the Leibniz rule it follows that
\begin{align}
	\begin{split}
	\II_1
	&
	\les
	\Vert
	\bp^{4-l} \pt^{l-2}
	Dv
	A_t
	\Vert_{0.5}^2
	+
	\Vert
	Dv
	\bp^{4-l} \pt^{l-2}
	A_t
	\Vert_{0.5}^2
	\\&\indeq
	+
	\sum_{m=0}^{4-l}
	\sum_{\substack{n=0\\m+n=1}}^{l-2}
	\Vert
	\bp^{m} \pt^n
	Dv
	\bp^{4-l-m} \pt^{l-2-n}
	A_t
	\Vert_{0.5}^2
	=:\II_{11}
	+
	\II_{12}
	+
	\II_{13}
	.
	\label{EQ611}
	\end{split}
\end{align}
Recall the multiplicative Sobolev inequality
\begin{align}
	\Vert fg\Vert_r
	\les
	\Vert f\Vert_{1.5+\epsilon}
	\Vert g\Vert_r
	,
	\label{EQkap}
\end{align}
for $0\leq r \leq 1.5$ and $\epsilon>0$.
From \eqref{EQkap} it follows that
\begin{align*}
	\begin{split}
	\II_{11}	
	&
	\les
	\Vert \bp^{4-l} \pt^{l-1} D \eta\Vert_{0.5}^2
	\Vert A_t\Vert_{1.5+\epsilon}^2
		\les
	1+TP(\sup_{t \in [0,T]} E(t))
	\end{split}
\end{align*}
where we used the fundamental theorem of calculus and $\Vert \bp^{4-l} \pt^l \eta\Vert_{1.5}^2 + \Vert \pt^2 v\Vert_3^2 \leq E(t)$, for $l=2,3,4$.
Similarly, we have
\begin{align*}
	\begin{split}
		\II_{12}
		&
	\les
	\Vert \bp^{4-l} \pt^{l-1} A\Vert_{0.5}^2
	\Vert Dv\Vert_{1.5+\epsilon}^2
	\les
	1+TP(\sup_{t \in [0,T]} E(t))
	.
	\end{split}
\end{align*}
For the term $\II_{13}$, using \eqref{EQkap}, we get
\begin{align*}
\begin{split}
	\II_{13}
	&
	\les
	\Vert
	\bp 
	Dv
	\Vert_{1.5+\epsilon}^2 
	\Vert
	\bp^{3-l} \pt^{l-2}
	A_t
	\Vert_{0.5}^2
	\mathbbm{1}_{l=\{2,3\}}
	+
	\Vert
	\pt
	Dv
	\Vert_{1.5+\epsilon}^2
	\Vert
	\bp^{4-l} \pt^{l-3}
	A_t
	\Vert_{0.5}^2
	\mathbbm{1}_{l=\{3.4\}}
	\\&
	\les	
	(\delta \sup_{t \in [0,T]} E(t)
	+
	C_\delta
	+
	C_\delta T P(\sup_{t \in [0,T]} E(t)))
	(1+ TP(\sup_{t \in [0,T]} E(t)) )
	\\&
	\les
	C_\delta
	+
	\delta \sup_{t\in [0,T]} E(t)
	+
	C_\delta T P(\sup_{t\in [0,T]} E(t))
	,
\end{split}
\end{align*} 
where we used the Sobolev interpolation inequality and the fundamental theorem of calculus.
For the term $\II_2$, from the Leibniz rule it follows that
\begin{align*}
	\begin{split}
		\II_2
		&
		\les
		\Vert
		\bp^{4-l} \pt^{l}
		D\eta
		(A- I_3)
		\Vert_{0.5}^2
		+
		\Vert
		\pt Dv
		\bp^{4-l} \pt^{l-2}
		(A- I_3)
		\Vert_{0.5}^2
		\\&\indeq
		+
		\sum_{m=0}^{4-l}
		\sum_{\substack{n=0\\m+n=1}}^{l-2}
		\Vert
		\bp^{m} \pt^{n+1}
		Dv
		\bp^{4-l-m} \pt^{l-2-n}
		(A- I_3)
		\Vert_{0.5}^2
		=:\II_{21}
		+
		\II_{22}
		+
		\II_{23}
		.
		\llabel{EQ612}
	\end{split}
\end{align*}
We bound the term $\II_{21}$ using \eqref{EQkap} as
\begin{align}
	\begin{split}
		\II_{21}	
		&
		\les
		\Vert \bp^{4-l} \pt^{l} D \eta\Vert_{0.5}^2
		\Vert A- I_3 \Vert_{1.5+\epsilon}^2
		\les
		T P(\sup_{t \in [0,T]} E(t))
		,
		\label{EQ659}
	\end{split}
\end{align}
since $\Vert A- I_3 \Vert\les T$.
The terms $\II_{22}$, $\II_{23}$, and $\II_3$ are estimated analogously using the above arguments.
Thus, we have
\begin{align*}
	\II_{22}
	+
	\II_{23}
	+
	\II_3
	\les
	1+TP(\sup_{t \in [0,T]} E(t))
	.
\end{align*}
Consequently, we conclude
\begin{align}
	\begin{split}
	\sum_{l=2}^4
	\Vert	 \bp^{4-l} 
	\pt^l
	\curl \eta	
	\Vert_{0.5}^2
	\les
	1+
	T P( \sup_{t \in [0,T]} 	E(t))
	.
	\label{EQ647}
	\end{split}
\end{align}

Finally, we derive the estimate of $\Vert \bp^3 \curl \eta \Vert_{0.5}^2$.
An application of the fundamental theorem of calculus to \eqref{EQ146} leads to
\begin{align*}
	[\curl \eta (t)]^k
	=
	[	t \curl u_0]^k
	+
	\int_0^t
	\epsilon_{jki} v^i,_s
	\int_0^{t'}
	(A^s_j)_t dt'' dt'
	+
	\int_0^t
	\int_0^{t'} Q_0(A, Dv) dt'' dt'
	.
	\llabel{EQ455}
\end{align*}
Applying $\bp^3$ to the above equation, we arrive at
\begin{align*}
	\begin{split}
	\Vert	 \bp^3 \curl \eta 
	\Vert_{0.5}^2
	&
	\les
	\underbrace{T \Vert \bp^3 \curl u_0
	\Vert_{0.5}^2}_{\LL_1}
	+
\underbrace{	\Vert
	\bp^3
	\int_0^t 	
	\epsilon_{jki}
	v^i,_s
	(A(t') - I_3) dt'
	\Vert_{0.5}^2 }_{\LL_2}
	\\&\indeq
	+
	\underbrace{\Vert
	\bp^3
	\int_0^t 	\int_0^{t'}
	Q_0(A, Dv) dt'' dt'
	\Vert_{0.5}^2}_{\LL_3}
	.
	\llabel{EQ565}
	\end{split}
\end{align*}
The term $\LL_1$ is bounded by $TM_0$ since $\Vert \bp^3 \curl u_0 \Vert_{0.5} = \Vert \bp^3 \curl_\eta v(0)\Vert_{0.5}$.
For the term $\LL_2$, we use the Leibniz rule, obtaining
\begin{align*}
	\begin{split}
	\LL_2
	&
	\les
	\Vert
	\int_0^t 	
	\bp^3 Dv
	(A- I_3) dt'
	\Vert_{0.5}^2
	+
	\Vert
	\int_0^t 	
	Dv
	\bp^3 A dt'
	\Vert_{0.5}^2
	+
	\sum_{l=1}^2
	\Vert
	\int_0^t 	
	\bp^l	Dv
	\bp^{3-l} A dt'
	\Vert_{0.5}^2
		\\&
	=:
	\LL_{21}
	+
	\LL_{22}
	+
	\LL_{23}
	.
	\end{split}
\end{align*}
For the term $\LL_{21}$, we integrate by parts in time, leading to
\begin{align}
	\begin{split}
	\LL_{21}
	&\les
	\Vert	\int_0^t 	
	\bp^3 D\eta
	\pt A dt'
	\Vert_{0.5}^2
	+
	\Vert
	\bp^3 D \eta 
	(A- I_3)
	\big|^{t'= t}_{t'=0}
	\Vert_{0.5}^2
	=:
	\LL_{211}
	+
	\LL_{212}
	.
	\label{EQ662}
	\end{split}
\end{align}
For the term $\LL_{211}$, we appeal to \eqref{EQkap}, obtaining
\begin{align}
	\begin{split}
		\LL_{211}
		&\les
		\int_0^T 
		\Vert \bp^3 D\eta\Vert_{0.5}^2
		\Vert \pt A\Vert_{1.5+\epsilon}^2
		dt'
		\les
		TP(\sup_{t\in [0,T]} E(t))
		.
		\label{EQ661}
	\end{split}
\end{align}
The term $\LL_{212}$ is treated in a similar fashion as in \eqref{EQ659}, and we arrive at
\begin{align*}
	\LL_{212}
	\les
	TP(\sup_{t \in [0,T]} E(t))
	.
\end{align*}
Similarly, the term $\LL_{22}$ is estimated as
\begin{align}
	\begin{split}
	\LL_{22}
	\les
		\int_0^t 		
	\Vert
	Dv
	\bp^3 A 
	\Vert_{0.5}^2
	dt'
	\les
			\int_0^t 		
	\Vert
	Dv \Vert_2^2 
	\Vert
	\bp^3 A 
	\Vert_{0.5}^2
	dt'
	\les
	TP(\sup_{t\in [0,T]} E(t))
	.
	\label{EQ663}
	\end{split}
\end{align}
The terms $\LL_{23}$ and $\LL_3$ are treated using similar arguments as in \eqref{EQ662}--\eqref{EQ663}.
Therefore, we conclude the estimate
\begin{align}
	\Vert \bp^3 \curl \eta \Vert_{0.5}^2
	\les
	1+TP(\sup_{t \in [0,T]} E(t))
	.
	\label{EQ664}
\end{align}

Collecting the estimates in \eqref{EQ648}--\eqref{EQ650}, \eqref{EQ649}, \eqref{EQ647}, and \eqref{EQ664}, we complete the proof of the lemma.
\end{proof}

\startnewsection{Energy estimates}{sec04}
The following lemma provides $\bp^4$ energy estimates of the solutions $v$, $J$, and $\eta$.
\cole
\begin{Lemma}
\label{Lenergy1}
We have 
\begin{align}
\begin{split}
	&
	\sup_{t\in [0,T]}
	\int_\Omega 
	\left(	
	|\bp^4 J (t)|^2
	+
	 | \bp^4 v(t)|^2	
	 \right)
	+
	\sup_{t\in [0,T]}
	\int_\Gamma
	|\bp^4 \eta^3 (t)|^2
	\les
	1+ T P(\sup_{t\in [0,T]} E(t))
	.
	\label{EQ39}
\end{split}
\end{align}
\end{Lemma}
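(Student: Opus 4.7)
The plan is a tangential energy estimate: apply $\bp^4$ to the momentum equation \eqref{EQ01} (with $\gamma=2$), test against $\bp^4 v^i$, and integrate by parts. Rewriting the pressure term in Piola form $a_i^k(\rho_0^2 J^{-2})_{,k} = \partial_k(a_i^k \rho_0^2 J^{-2})$ via \eqref{piola} and integrating by parts in $\partial_k$, the bottom-boundary contribution vanishes (since $v^3 = 0$ and $\eta^3 \equiv 0$ on $\{x_3=0\}$ force $a^3_\alpha = 0$ there), producing
\begin{align*}
\tfrac{1}{2}\tfrac{d}{dt}\int_\Omega \rho_0 |\bp^4 v|^2 - \int_\Omega \bp^4(a_i^k \rho_0^2 J^{-2}) \partial_k \bp^4 v^i + \int_\Gamma \bp^4(a_i^3 \rho_0^2 J^{-2}) \bp^4 v^i = 0.
\end{align*}
Since $\rho_0 \geq 1$, the time term already gives coercive control of $\|\bp^4 v\|_0^2$.

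For the interior term I would use that $\rho_0 = \rho_0(x_3)$ is annihilated by tangential derivatives, so at leading order $\bp^4(\rho_0^2 J^{-2}) = -2\rho_0^2 J^{-3} \bp^4 J + \text{l.o.t.}$ Combining this with the identity $J_t = a_i^k v^i_{,k}$ from \eqref{EQ20}, which rewrites $a_i^k \partial_k \bp^4 v^i = \partial_t \bp^4 J + [\bp^4, a_i^k]\partial_k v^i$, the leading piece telescopes into $\frac{d}{dt}\int_\Omega \rho_0^2 J^{-3} |\bp^4 J|^2$ modulo the controllable remainder coming from $\partial_t(\rho_0^2 J^{-3})$ and the commutator. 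Assumption~(i) yields $\rho_0^2 J^{-3} \gtrsim 1$ on $\Omega$, so after time integration this is coercive for $\|\bp^4 J\|_0^2$.

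The boundary integral is the crux. Because $\rho_0^2 J^{-2} \equiv 1$ on $\Gamma$, all its tangential derivatives on $\Gamma$ vanish, and the Leibniz expansion collapses to $\int_\Gamma \bp^4 a_i^3 \bp^4 v^i$. Writing $\bp^4 v = \partial_t \bp^4 \eta$, integrating in time, and integrating by parts in $t$ (the $t=0$ contribution vanishes since $\eta|_{t=0}=Id$ forces $\bp^4 a_i^3|_{t=0}=0$) reduces the boundary pairing at time $t$ to $\int_\Gamma \bp^4 a_i^3(t)\bp^4 \eta^i(t)$ plus a remainder of size $TP(\sup E)$. Using the cofactor formula \eqref{EQ70}, $a_i^3 = \epsilon_{ijk}\eta^j_{,1}\eta^k_{,2}$, together with the near-identity bookkeeping~(vi), the Rayleigh--Taylor sign \eqref{EQ113}, tangential integration by parts on $\Gamma$, and antisymmetry of $\epsilon_{ijk}$ against the symmetric products $\bp^4\eta^i \bp^4\eta^j$, the leading-order contribution isolates the coercive boundary norm controlling $|\bp^4 \eta^3|_0^2$ up to terms absorbable into $\delta \sup E + TP(\sup E)$.

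The main obstacle is this last step: isolating the coercive boundary norm $|\bp^4 \eta^3|_0^2$ from $\int_\Gamma \bp^4 a_i^3 \bp^4 \eta^i$ requires careful use of the cofactor antisymmetric structure together with the Rayleigh--Taylor sign, since the naive leading-order identifications $\bp^4 a^3_\alpha \approx -\partial_\alpha \bp^4 \eta^3$ and $\bp^4 a^3_3 \approx \partial_\alpha \bp^4 \eta^\alpha$ produce cross-coupling between the normal and tangential components on $\Gamma$ that must be sorted out by tangential integration by parts. All remaining commutators, subleading Leibniz pieces, and the time derivative of the weight $\rho_0^2 J^{-3}$ are handled by H\"older, Sobolev embedding, the multiplicative inequalities \eqref{EQkap}--\eqref{EQkap2}, and the fundamental theorem of calculus combined with (i)--(vi), yielding the stated bound $\lesssim 1 + T P(\sup_{[0,T]} E(t))$.
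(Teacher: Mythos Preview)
Your plan has a genuine gap in two linked places. First, after integrating by parts in $\partial_k$, the interior term is $-\int_\Omega \bp^4(a^k_i\rho_0^2 J^{-2})\,\bp^4 v^i_{,k}$, and its Leibniz expansion has \emph{two} top-order pieces: the one you keep, $a^k_i\bp^4(\rho_0^2 J^{-2})$, and the one you silently drop, $(\bp^4 a^k_i)\rho_0^2 J^{-2}$. The latter contributes $-\int_\Omega \rho_0^2 J^{-2}(\bp^4 a^k_i)\,\bp^4 v^i_{,k}$, which after expanding the cofactor via \eqref{EQ61} scales like $\int_\Omega \bp^4 D\eta\cdot\bp^4 Dv$ and is not lower order. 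Second, your boundary term $\int_\Gamma \bp^4 a^3_i\,\bp^4\eta^i$ carries no Rayleigh--Taylor weight and is not coercive: at leading order (using \eqref{EQ70} and $\eta\approx\mathrm{Id}$) one has $\bp^4 a^3_\alpha\approx-\bp^4\eta^3_{,\alpha}$ and $\bp^4 a^3_3\approx\bp^4\eta^\alpha_{,\alpha}$, so after tangential integration by parts the expression collapses to $2\int_\Gamma \bp^4\eta^3\,\partial_\alpha\bp^4\eta^\alpha$, a pure cross-term with no sign. The Rayleigh--Taylor condition \eqref{EQ113} has no entry point in this form.

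The paper's route avoids both issues by \emph{not} integrating by parts in $\partial_k$ for the top-order cofactor term. It first isolates $\mathcal{J}_1=\int_\Omega(\bp^4 a^k_i)(\rho_0^2 J^{-2})_{,k}\,\bp^4 v^i$ (keeping the pressure gradient intact), expands $\bp^4 a^k_i$ via \eqref{EQ61} to expose $\bp^4\eta^r_{,s}$, and then integrates by parts in $\partial_s$. The resulting boundary term is $-\int_\Gamma J^{-1}a^3_i a^3_r(\rho_0^2 J^{-2})_{,3}\,\bp^4\eta^r\,\bp^4 v^i$, where the Rayleigh--Taylor sign \eqref{EQ84} now appears as a multiplicative weight and yields directly $\tfrac{d}{dt}\tfrac14\int_\Gamma|a^3_r\bp^4\eta^r|^2$, hence coercive control of $|\bp^4\eta^3|_0^2$. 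The dangerous interior piece produced by this $\partial_s$-integration by parts cancels exactly (after integration by parts in $t$) against its symmetric companion $\mathcal{J}_{12}$, so nothing like $\bp^4 D\eta\cdot\bp^4 Dv$ survives. In short: the coercivity lives in the pressure-gradient weight $(\rho_0^2 J^{-2})_{,3}$, and your early $\partial_k$-integration by parts throws it away.
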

\colb

\begin{proof}[Proof of Lemma~\ref{Lenergy1}]
Applying $\bp^4$ to \eqref{EQ01} and taking the inner product with $\bp^4 v$, we obtain
\begin{align}
	\begin{split}
	&
	\frac{1}{2} \frac{d}{dt}
	\int_\Omega \rho_0 
	| \bp^4  v|^2 
	+
	\underbrace{
	\int_\Omega \bp^4
	a^k_i  
	(\rho_0^2 J^{-2}),_k 
	\bp^4 v^i
	}_{\mathcal{J}_1}
	+
	\underbrace{\int_\Omega 
	a^k_i 
	(\rho_0^2 \bp^4  J^{-2}),_k 
	\bp^4 v^i}_{\mathcal{J}_2}
	\\&\indeq
	=
	\underbrace{\sum_{l=1}^3
	c_{l}
	\int_\Omega
	\bp^{4-l} a_i^k 
	( \rho_0^2 
	\bp^l  J^{-2}),_k 
	\bp^4 v^i}_{\mathcal{J}_3}
	.
	\label{EQ91}
	\end{split}
\end{align}

\textit{Estimate of $\mathcal{J}_1$ in \eqref{EQ91}}:
From \eqref{EQ61} and the Leibniz rule it follows that
\begin{align*}
\begin{split}
	\JJ_1
	&
	=
	-
	\underbrace{
	\int_\Omega
	J^{-1}  a^s_i a^k_r 
	\bp^4 \eta^r,_s
	(\rho_0^2 J^{-2}),_k 
	\bp^4 v^i}_{\JJ_{11}}
	+
	\underbrace{
	\int_\Omega 
	J^{-1} a^s_r a^k_i 
	\bp^4  \eta^r,_s
	( \rho_0^2 J^{-2}),_k 
	\bp^4 v^i}_{\JJ_{12}}
	\\&\indeq
	+
	\underbrace{
		\sum_{l=1}^3
	c_l
		\int_\Omega
	\bar{\partial}^l
	( J^{-1} (a^s_r a^k_i - a^s_i a^k_r))
	\bp^{4-l} \eta^r,_s
	( \rho_0^2 J^{-2}),_k 
	\bp^4 v^i}_{\JJ_{13}}
	.
\end{split}
\end{align*}
For the term $\JJ_{11}$, we integrate by parts in $\partial_s$, obtaining
\begin{align}
\begin{split}
	-
	\int_0^T
	\JJ_{11}
	&
	=	
		\underbrace{\int_0^T
		\int_{ \{x_3 = 0\} }
		J^{-1}  a^3_i a^k_r
		\bp^4 \eta^r
		( \rho_0^2 J^{-2} ),_k
		\bp^4 v^i }_{=0}
	-
	\underbrace{\int_0^T
	\int_{ \Gamma }
	  J^{-1}  a^3_i a^3_r
	 \bp^4 \eta^r
	( \rho_0^2 J^{-2} ),_3
	\bp^4 v^i }_{\JJ_{111}}
	\\
	&\indeq
	+
\underbrace{	\int_0^T
	\int_\Omega
	 J^{-1}  a^s_i a^k_r
	 \bp^4 \eta^r
	(\rho_0^2 J^{-2} ),_k
	\bp^4 v^i,_s}_{\JJ_{112}}
	+
	\underbrace{\int_0^T
	\int_\Omega
	(J^{-1}  a^s_i a^k_r
	(\rho_0^2 J^{-2} ),_k
	),_s
	\bp^4 \eta^r
	\bp^4 v^i}_{\JJ_{113}}
	,
\label{EQ30}
\end{split}
\end{align}
where we note that $a^k_r(\rho_0^2 J^{-2}),_k =a^3_r(\rho_0^2 J^{-2}),_3$ since 
$\rho_0^2 J^{-2}=1$ on $\Gamma \times [0,T]$.
In \eqref{EQ30}, we have also used that $\eta^3 = 0 $ on $\{x_3= 0\} \times [0,T]$, which leads to $a^3_i \bp^4 v^i = 0$ on $\{x_3 = 0\} \times [0,T]$.
We rewrite the term $\JJ_{111}$ as
\begin{align*}
	\begin{split}
	-
	\JJ_{111}
	&
	=
	-
	\underbrace{\frac{1}{2}
	\int_0^T
	\frac{d}{dt}
		\int_\Gamma
		J^{-1} 
		(\rho_0^2 J^{-2}),_3
	 |\bp^4 \eta^r a^3_r|^2}_{\JJ_{1111}}
	 +
	 \underbrace{\frac{1}{2} 
	 \int_0^T
	 \int_{\Gamma}
	 ( J^{-1} 
	 (\rho_0^2 J^{-2}),_3)_t
	 |\bp^4 \eta^r a^3_r|^2}_{\JJ_{1112}}
	 \\
	 &\indeq
	 	 +
	 \underbrace{\int_0^T
	 \int_{\Gamma}
	  J^{-1} 
	 (\rho_0^2 J^{-2}),_3
	 \bp^4 \eta^r \pt a^3_r
	 \bp^4 \eta^i a^3_i}_{\JJ_{1113}}
	 .
	\end{split}
\end{align*}
Note that from \eqref{EQ68} we may assume that 
\begin{align}
	-J^{-1} 
	(\rho_0^2 J^{-2}),_3 
	=
	-J^{-1} 
	(\rho_0^2,_3 J^{-2} 
	-
	2 J^{-3} J,_3
	\rho_0^2 ) 
	\geq 
	\frac{1}{2}
	,
	\label{EQ84}
\end{align}
on $\Gamma \times [0,T]$,
which can be justified by using the fundamental theorem of calculus and taking $T>0$ sufficiently small after we establish the a~priori bounds.
From \eqref{EQ84} it follows that
\begin{align}
	\begin{split}
	\JJ_{1111}
	&
	=	
	 -\frac{1}{2}
	\int_\Gamma
	J^{-1} 
	(\rho_0^2 J^{-2} ),_3
	|\bp^4 \eta^r  a^3_r |^2
	(T)
	+
	M_0
	\geq
	\frac{1}{4}
	\int_\Gamma
	|\bp^4 \eta^r a^3_r  |^2 (T) 
	+
	M_0
	.
	\label{EQ249}
	\end{split}
\end{align}
For the term $\JJ_{1112}$,
using the H\"older and Sobolev inequalities and Lemma~\ref{trace}, we get
\begin{align*}
	\begin{split}
	\JJ_{1112}
	&
	\les
	\int_0^T
	(\Vert DJ_t\Vert_{L^\infty}
	+
	1)
	| \bp^4 \eta |_0^2
	\les
	\int_0^T
	(\Vert J_t\Vert_{H^3}
	+
	1)
	\Vert \bp^3 \eta\Vert_{1.5}^2
	\les
	T P(\sup_{t\in [0,T]} E(t))
	,
	\end{split}
\end{align*}
since $\Vert \pt^3 \eta\Vert_{1.5}^2 \leq E(t)$.
Similarly, the term $\JJ_{1113}$ is estimated as
\begin{align*}
	\JJ_{1113}
	\les
	T P(\sup_{t\in [0,T]} E(t))
	.
\end{align*}
For the term $\JJ_{112}$, we integrate by parts in time, obtaining
\begin{align*}
	\begin{split}
	\JJ_{112}
	&
	=	-
	\underbrace{	
	\int_0^T
	\int_\Omega
	J^{-1}  a^s_i a^k_r
	\bp^4 \partial_t \eta^r
	( \rho_0^2 J^{-2} ),_k
	\bp^4 \eta^i,_s
	}_{\JJ_{1121}}
	+
	\underbrace{
	\int_\Omega J^{-1}  a^s_i a^k_r
	\bp^4  \eta^r 
	( \rho_0^2 J^{-2} ),_k
	\bp^4 \eta^i,_s
	\big|_0^T
	}_{\JJ_{1122}}
	\\&\indeq
	-
	\underbrace{	
	\int_0^T
	\int_\Omega
	(	J^{-1}  a^s_i a^k_r
	( \rho_0^2 J^{-2} ),_k)_t
		\bp^4  \eta^r
	\bp^4 \eta^i,_s
	}_{\JJ_{1123}}
	.
	\end{split}
\end{align*}
It is clear that
$
	-
	\JJ_{1121}
	+
	\int_0^T
	\JJ_{12}
	=
	0
$.
Note that $\bp^4 J = a^s_i \bp^4 \eta^i,_s+ \sum_{l=1}^3 c_l \bp^l a^s_i \bp^{4-l} \eta^i,_s$.
For the pointwise in time term $\JJ_{1122}$, using the H\"older and Sobolev inequalities and the fundamental theorem of calculus, we arrive at
\begin{align*}
	\begin{split}
	\JJ_{1122}
	&
	\les
	\Vert  \bp^4 J (T)\Vert_0^2
	+
	\Vert   \bp^4 \eta (T)\Vert_{0}^2
	+
	\sum_{l=1}^3
	 \Vert \bp^l a (T) \bp^{4-l} D \eta (T)\Vert_0^2
	+
	1
	\les
	1+
	T P(\sup_{t\in [0,T]} E(t))
	,
	\end{split}
\end{align*}
since $\Vert \pt J\Vert_4^2 + \Vert v\Vert_4^2 \leq E(t)$.
For the term $\JJ_{1123}$, we appeal to Lemma~\ref{duality}, obtaining
\begin{align*}	
	\begin{split}
	-
	\JJ_{1123}
	&
	\les	
	\int_0^T
	(1+ \Vert DJ_t\Vert_{L^\infty})
	\Vert \bp^4 \eta\Vert_{0.5}
	\Vert \bp^4 D \eta\Vert_{-0.5}
	\les
		\int_0^T
		(1+ \Vert J_t\Vert_{3})
	\Vert \bp^3 \eta\Vert_{1.5}^2
	\les
	T P(\sup_{t\in [0,T]} E(t))
	,
	\end{split}
\end{align*}
where we used the H\"older and Sobolev inequalities.
The term $\JJ_{113}$ is estimated using the H\"older and Sobolev inequalities as
\begin{align*}
	\begin{split}
	\JJ_{113}
	\les
	\int_0^T
	(1+ \Vert D^2 J\Vert_{L^\infty})
	\Vert \bp^4 \eta\Vert_0
	\Vert \bp^4 v\Vert_0
	\les
	T P(\sup_{t \in [0,T]} E(t))
	. 	
	\end{split}
\end{align*}
The term $\JJ_{13}$ consists of essentially lower-order terms which can be estimated using the H\"older and Sobolev inequalities and the fundamental theorem of calculus as
\begin{align*}
	\int_0^T
	\JJ_{13}
	\les
	TP(\sup_{t\in [0,T]} E(t))
	.
\end{align*}
Collecting \eqref{EQ249} and the above estimates, we conclude that, after integrating in time of \eqref{EQ91},
\begin{align}
	\begin{split}
	&
	\frac{1}{2}
	\int_\Omega
	\rho_0
	|\bp^4 v(T)|^2
	+
	\frac{ 1}{4}
	\int_\Gamma
	|\bp^4 \eta^r  a^3_r |^2 (T)
	+
	\int_0^T
	\mathcal{J}_2
	\les
	1+T P(\sup_{t\in [0,T]} E(t))
		+
	\int_0^T
	\mathcal{J}_3
	.
	\label{EQ711}
	\end{split}
\end{align}
Using $|\bp^4 \eta^r a^3_r| \geq |\bp^4 \eta^3 a^3_3| - |\bp^4 \eta^\alpha a^3_\alpha|$ and $\Vert a  - I_3 \Vert_2 \les T$ on $[0,T]\times \Omega$, we obtain 
\begin{align}
	\begin{split}
		\int_\Gamma
		|\bp^4 \eta^3 a^3_3 |^2 (T)
		&
	\leq
	CT \int_\Gamma
	|\bp^4 \eta (T)|^2 
	+
		\int_\Gamma
	|\bp^4 \eta^r  a^3_r	|^2 (T)
	\leq
	CT \Vert \bp^3 \eta (T)\Vert_{1.5}^2 
	+
	\int_\Gamma
	|\bp^4 \eta^r  a^3_r |^2	 (T)
	,
	\label{EQ712}
	\end{split}
\end{align}
where we used the H\"older and Sobolev inequalities and Lemma~\ref{trace}.
Combining \eqref{EQ711}--\eqref{EQ712}, we arrive at
\begin{align}
	\begin{split}
		&
	\frac{1}{2}
	\int_\Omega
	\rho_0 |\bp^4 v(T)|^2
	+
	\frac{ 1}{8}
	\int_\Gamma
	|\bp^4 \eta^3 (T)|^2
	+
	\int_0^T
	\mathcal{J}_2
	\les
	1+ T P(\sup_{t\in [0,T]} E(t))
	+
	\int_0^T
	\mathcal{J}_3
	.
	\label{EQ92}	
	\end{split}
\end{align}

\textit{Estimate of $\mathcal{J}_2$ in \eqref{EQ91}}:
We integrate by parts in $\partial_k$ and use the Piola identity \eqref{piola} to get
\begin{align}
	\begin{split}
	\mathcal{J}_2
	&
	=
	\int_{\Gamma}
	a^3_i \rho_0^2 \bp^4 J^{-2}
	\bp^4 v^i
	-
	\int_{\{x_3=0\}}
	a^3_i \rho_0^2 \bp^4 J^{-2}
	\bp^4 v^i 
	-
	\int_\Omega
	a^k_i \rho_0^2 \bp^4 J^{-2}
	\bp^4 v^i,_k
	\\&
	=
	-
	\int_\Omega
	a^k_i \rho_0^2 \bp^4 J^{-2}
	\bp^4 v^i,_k
	,		
	\label{EQ63}
	\end{split}
\end{align}
since $\bp^4 J^{-2}=0$ on $\Gamma \times [0,T]$ and $a^3_i \bp^4 v^i = 0$ on $\{x_3=0\} \times [0,T]$.
Note that from \eqref{EQ20} and the Leibniz rule it follows
\begin{align}
	\begin{split}
		\bp^4 \pt J
		&
		=
		a^k_i \bp^4 v^i,_k
		+
		\bp^4 a^k_i v^i,_k
		+
		\sum_{l=1}^3
		c_l
		\bp^l a^k_i  \bp^{4-l} v^i,_k
		.
		\label{EQ680}
	\end{split}
\end{align}
Inserting \eqref{EQ680} to \eqref{EQ63}, we arrive at
\begin{align*}
	\begin{split}
	\mathcal{J}_2
	&
	=
	2\int_\Omega
	\rho_0^2
	a^k_i 
	J^{-3} \bp^4 J	\bp^4 v^i,_k
	+
	\underbrace{\sum_{l=1}^3
	c_l
	\int_\Omega
	\rho_0^2
	a^k_i 
	\bp^l J^{-3}
	\bp^{4-l} J	
	\bp^4 v^i,_k}_{\mathcal{J}_{24}}
	\\
	&
	=
		\underbrace{2\int_\Omega
	\rho_0^2
	J^{-3}
	\bp^4 J
	\bp^4 \pt J}_{\mathcal{J}_{21}}
	 -
	 \underbrace{
	 \int_\Omega
	 \rho_0^2
	 J^{-3}
	 \bp^4 J
	\bp^4 a^k_i v^i,_k
	}_{\mathcal{J}_{22}}
	 +
	 \underbrace{
	 	\sum_{l=1}^3
	 	c_l
	 	\int_\Omega
	 	\rho_0^2
	 	J^{-3}
	 	\bp^4 J
	 	 \bp^l a^k_i  \bp^{4-l} v^i,_k
	 }_{\mathcal{J}_{23}}
	 +
	 \JJ_{24}
	 .
	\end{split}
\end{align*}
The term $\mathcal{J}_{21}$ can be rewritten as
\begin{align}
	\begin{split}
	\mathcal{J}_{21}
	&
	=
	\frac{d}{dt}
	\int_\Omega
	\rho_0^2
	J^{-3}
	|\bp^4 J|^2
	-
	\underbrace{\int_\Omega
	(\rho_0^2 J^{-3})_t
	|\bp^4 J|^2}_{\JJ_{21}'}
	.
	\label{EQ410}
	\end{split}
\end{align}
Using the H\"older and Sobolev inequalities, we obtain
\begin{align*}
	\int_0^T
	\JJ'_{21}
	\les
	TP(\sup_{t \in [0,T]} E(t))
	.
\end{align*}
The highest order term in $\mathcal{J}_{22}$ is of the form $\int_\Omega \rho_0^2 J^{-3} \bp^4 J \bp^4 D\eta Dv$, which can be treated using the H\"older and Sobolev inequalities and Lemma~\ref{duality} as
\begin{align*}
	\begin{split}
	\int_0^T 
	\int_\Omega \rho_0^2 J^{-3} \bp^4 J \bp^4 D\eta Dv
	&
	\les
	\int_0^T 
	\Vert  \bp^4 J\Vert_{0.5}
	\Vert \bp^4 D \eta\Vert_{-0.5}
	\\&
	\les
	\int_0^T
	\Vert  \bp^3 J\Vert_{1.5}
	\Vert \bp^3  \eta\Vert_{1.5}
	\les
	T P(\sup_{t\in [0,T]} E(t))
	,
	\end{split}
\end{align*}
since $\Vert \bp^3 J\Vert_{1.5}^2 + \Vert \bp^3 \eta\Vert_{1.5}^2 \leq E(t)$.
The rest of the terms in $\JJ_{22}$ are of lower order which can be estimated using the H\"older and Sobolev inequalities and the fundamental theorem of calculus. 
Therefore, we obtain
\begin{align*}
	-
	\int_0^T
	\JJ_{22}
	\les
	1+TP(\sup_{t \in [0,T]} E(t))
	.
\end{align*}
The term $\JJ_{23}$ consists of lower-order terms which can be estimated using the H\"older and Sobolev inequalities and the fundamental theorem of calculus as
\begin{align*}
	\begin{split}
		\int_0^T 
		\JJ_{23}	
		\les
		T P(\sup_{t\in [0,T]} E(t))
		,
	\end{split}
\end{align*}
For the term $\JJ_{24}$, we integrate by parts in $\partial_k$ and use the Piola identity \eqref{piola}, obtaining
\begin{align}
	\begin{split}
	\mathcal{J}_{24}
	&
	=
		\sum_{l=1}^3
	c_l
	\int_{\Gamma}
	\rho_0^2 a^3_i \bp^l
	J^{-3}
	\bp^{4-l} J
	\bp^4 v^i
	+
	\sum_{l=1}^3
	c_l
	\int_{\{x_3 = 0\}}
	\rho_0^2 a^3_i \bp^l
	J^{-3}
	\bp^{4-l} J
	\bp^4 v^i
	\\&\indeq
	+
	\sum_{l=1}^3
	c_l
	\int_\Omega
	a^k_i
	(\rho_0^2  \bp^l
	J^{-3}
	\bp^{4-l} J),_k
	\bp^4 v^i
	=
		\sum_{l=1}^3
	c_l
	\int_\Omega
	a^k_i
	(	\rho_0^2 
	\bp^l
	J^{-3}
	\bp^{4-l} J),_k
	\bp^4 v^i
	,
	\label{EQ90}
	\end{split}
\end{align}
since $\bp^l J^{-3} = 0$ on $\Gamma \times [0,T]$ for $l \in \mathbb{N}$ and $a^3_i \bp^4 v^i = 0$ on $\{x_3 = 0\} \times [0,T]$.
Thus, using the H\"older and Sobolev inequalities, the term $\JJ_{24}$ is estimated as
\begin{align}
	\begin{split}
		\int_0^T
	\JJ_{24}
	\les	
	TP(\sup_{t \in [0,T]} E(t))
	.
	\label{EQ679}
	\end{split}
\end{align}
Combining \eqref{EQ92}, \eqref{EQ410}, and the above estimates, we conclude
\begin{align}
	\begin{split}
		&
		\frac{1}{2}
		\int_\Omega
		\left(
		\rho_0 |\bp^4 v (T)|^2
		+
		\rho_0^2
		J^{-3} (T)
		|\bp^4 J(T)|^2
		\right)
		+
		\frac{ 1}{8}
		\int_\Gamma
		|\bp^4 \eta^3 (T)|^2
		\\&\indeq
		\les
		1+T P(\sup_{t\in [0,T]} E(t))
		+
		\int_0^T
		\mathcal{J}_3
		.
		\label{EQ360}
	\end{split}
\end{align}

\textit{Estimate of $\mathcal{J}_3$ in \eqref{EQ91}}:
We proceed analogously as in \eqref{EQ90}--\eqref{EQ679}, obtaining
\begin{align}
	\int_0^T 
	\mathcal{J}_3
		\les
	T P(\sup_{t\in [0,T]} E(t))
	.
	\label{EQ94}
\end{align}

\textit{Concluding the proof}:
Combining \eqref{EQ360}--\eqref{EQ94}, we arrive at
\begin{align*}
\begin{split}
	&
	\frac{1}{2}
	\int_\Omega
	\left(
	\rho_0 |\bp^4 v (T)|^2
	+
	\rho_0^2
	J^{-3} 
	|\bp^4 J(T)|^2
	\right)
	+
	\frac{ 1}{8}
	\int_\Gamma
	|\bp^4 \eta^3 (T)|^2
	\les
	1+ T P(\sup_{t\in [0,T]} E(t))
	.
\end{split}
\end{align*}
The proof of the lemma is thus completed.
\end{proof}

The following lemma provides $\bp^3 \pt^2$ energy estimates of the solutions $v$, $J$, and $\eta$.
\cole
\begin{Lemma}
	\label{Lenergy2}
	For $\delta \in (0,1)$, we have 
\begin{align}
	\begin{split}
		&
		\sup_{t \in [0,T]}
		\int_\Omega
		\left(
		| \rhog D \bp^3 \pt^2 \eta (t)|^2
		+
		|\bp^3 \pt^2 v (t)|^2 
		+
		|\bp^3 \pt^2 J (t)|^2
		\right)
		+
		\sup_{t \in [0,T]}
		\int_\Gamma
		|\bp^3 \pt^2 \eta (t) |^2 
				\\&\indeq
		\les
		C_\delta
		+
		\delta \sup_{t\in [0,T]} E(t)
		+
		C_\delta T P(\sup_{t\in [0,T]} E(t))
		,
		\label{EQ600}
	\end{split}
\end{align}
where $C_\delta>0$ is a constant depending on $\delta$.
\end{Lemma}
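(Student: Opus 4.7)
I would proceed in direct analogy with Lemma~\ref{Lenergy1}, letting two time derivatives play the role of two tangential derivatives. Apply $\bp^3\pt^2$ to the momentum equation \eqref{EQ01} and pair the result with $\bp^3\pt^2 v$ in $L^2(\Omega)$. The $\pt^3 v$-term produces $\frac{1}{2}\frac{d}{dt}\int_\Omega \rho_0|\bp^3\pt^2 v|^2$, and the pressure term splits via Leibniz into the exact analogs $\mathcal{J}_1,\mathcal{J}_2,\mathcal{J}_3$ of \eqref{EQ91}, with $\bp^3\pt^2$ replacing $\bp^4$ in every position: $\mathcal{J}_1$ carries $\bp^3\pt^2 a^k_i$, $\mathcal{J}_2$ carries $\bp^3\pt^2 J^{-2}$, and $\mathcal{J}_3$ collects the intermediate Leibniz remainders.

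For $\mathcal{J}_1$, I would expand $\bp^3\pt^2 a^k_i$ via \eqref{EQ15}; its top-order part reads $J^{-1}(a^s_r a^k_i - a^s_i a^k_r)\bp^3\pt v^r,_s$. Integrating by parts in $\pa_s$, the bottom-boundary term vanishes by $v^3|_{\{x_3=0\}}=0$ and $a^3_\alpha|_{\{x_3=0\}}=0$, while on $\Gamma$ the constraint $\rho_0^2 J^{-2}=1$ reduces the surviving integrand to $J^{-1}(\rho_0^2 J^{-2}),_3 (a^3_r \bp^3\pt v^r)(a^3_i \bp^3\pt^2 v^i)$. Since $\bp^3\pt v=\bp^3\pt^2\eta$ and $\bp^3\pt^2 v=\pt(\bp^3\pt^2\eta)$, one integration by parts in $t$ converts this boundary integral to the total derivative of $\frac{1}{2}\int_\Gamma J^{-1}(\rho_0^2 J^{-2}),_3 |a^3_i\bp^3\pt^2\eta^i|^2$ modulo commutators; the sign condition \eqref{EQ84} together with $\Vert a-I_3\Vert_2\lesssim T$ then produce the boundary energy $\int_\Gamma |\bp^3\pt^2\eta|^2(T)$ of \eqref{EQ600}, following the extraction in \eqref{EQ712}.

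For $\mathcal{J}_2$, I would integrate by parts in $\pa_k$ using the Piola identity \eqref{piola}; both boundary contributions vanish (the $\Gamma$-piece because $\bp^3\pt^2 J^{-2}=0$ there, the $\{x_3=0\}$-piece because $a^3_i\bp^3\pt^2 v^i=0$). Substituting $\bp^3\pt^2 J^{-2}=-2J^{-3}\bp^3\pt^2 J+\text{l.o.t.}$ and, via \eqref{EQ20}, $a^k_i\bp^3\pt^2 v^i,_k=\bp^3\pt^3 J+\text{l.o.t.}$, the leading part of $\mathcal{J}_2$ becomes $\frac{d}{dt}\int_\Omega \rho_0^2 J^{-3}|\bp^3\pt^2 J|^2$. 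Because $\gamma=2$, the decomposition $\rho_0^2=\rhog^2+\rhol^2$ with $\rhol\ge 1$ yields positive-definite control of both $\int_\Omega|\bp^3\pt^2 J|^2$ and $\int_\Omega\rhog^2|\bp^3\pt^2 J|^2$. To convert the weighted $J$-bound into the weighted gradient $|\rhog D\bp^3\pt^2\eta|^2$ appearing in \eqref{EQ600}, I would use $\bp^3\pt^2 J= a^s_r\bp^3\pt^2\eta^r,_s+\text{l.o.t.}$ as the divergence input to a div-curl decomposition, with the curl input supplied by the weighted curl bound of Lemma~\ref{Lcurl}.

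The remaining pieces of $\mathcal{J}_1,\mathcal{J}_2,\mathcal{J}_3$ and the commutators generated by integration by parts in time are handled with the same toolkit used in Lemma~\ref{Lenergy1}: H\"older, Sobolev, the multiplicative inequalities \eqref{EQkap}--\eqref{EQkap2}, Lemma~\ref{duality}, and the fundamental theorem of calculus. The main obstacle, and the reason the constant $\delta$ appears, is a small number of Leibniz remainders in which the derivatives concentrate on a single factor at borderline regularity, such as $(\bp^3\pt^2 a)(Dv)$ paired against $\bp^3\pt^2 v$, or the pointwise-in-time remainders analogous to $\mathcal{J}_{1122}$ in Lemma~\ref{Lenergy1}. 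These are split by Young's inequality $ab\le\delta a^2+C_\delta b^2$, so that the $\delta a^2$ factor is absorbed into $\delta\sup_{[0,T]} E(t)$ on the right of \eqref{EQ600}, while the $C_\delta b^2$ factor is integrated in time to produce the $C_\delta T P(\sup E)$ contribution. Collecting the $\mathcal{J}_i$ identities at $t=T$ then yields \eqref{EQ600}.
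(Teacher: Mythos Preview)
Your overall architecture is right, but there are two genuine gaps, one of which is the key new idea in this lemma relative to Lemma~\ref{Lenergy1}.

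\textbf{The boundary tangential components.} You claim that from $\int_\Gamma |a^3_i\bp^3\pt^2\eta^i|^2$, the extraction in \eqref{EQ712} and $\Vert a-I_3\Vert_2\lesssim T$ yield $\int_\Gamma|\bp^3\pt^2\eta|^2$. They do not: that extraction gives only the normal component $|\bp^3\pt^2\eta^3|^2$, exactly as in Lemma~\ref{Lenergy1} where only $|\bp^4\eta^3|$ appears in the conclusion. But \eqref{EQ600} requires the \emph{full} vector $|\bp^3\pt^2\eta|$ on $\Gamma$, i.e.\ you must separately control $|\bp^3\pt^2\eta^\alpha|_0$ for $\alpha=1,2$. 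The paper does this by restricting the momentum equation \eqref{EQ200} to $\Gamma$, where $\rho_0=J=1$, $\rho_{0,3}=-1$, to obtain the algebraic relation
\[
\pt^2\eta^\alpha = (a^3_3)^{-1} a^3_\alpha\, \pt^2\eta^3 \quad\text{on }\Gamma,
\]
and then applies $\bp^3$ and estimates the commutators. The smallness of $a^3_\alpha$ makes the leading term harmless, while the worst commutator (of type $\bp^4\eta\cdot\bp\eta^3$) is handled using the already-established $|\bp^4\eta^3|_0$ from Lemma~\ref{Lenergy1} and $|\bp\eta^3|_{L^\infty(\Gamma)}\lesssim T$. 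This step is singled out in the introduction as one of the main difficulties and cannot be skipped.

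\textbf{The weighted gradient term.} Your plan to extract $\Vert\rhog D\bp^3\pt^2\eta\Vert_0^2$ a posteriori from $\Vert\rhog\bp^3\pt^2 J\Vert_0^2$ and the weighted curl via a div--curl decomposition runs into trouble: the integration-by-parts identity relating $|DF|^2$ to $(\dive F)^2$ and $|\curl F|^2$ with weight $\rhog^2$ throws off unweighted error terms of the form $\int \partial_3(\rhog^2)\,\partial_j F^3\, F^j$, and $\Vert D\bp^3\pt^2\eta\Vert_0$ (no weight) is not controlled by $E$. The paper avoids this by splitting $\rho_0^2=\rhog^2+\rhol^2$ \emph{inside} $\mathcal{J}_1$: for the $\rhog^2$-piece one integrates by parts in $\partial_k$ (the $\Gamma$-boundary term vanishes since $\rhog=0$ there) and then uses the pointwise algebra in \eqref{EQ72}--\eqref{EQ75} to produce, directly from the energy identity, the combination $\frac12\int_\Omega\rhog^2 J^{-1}\big(|D_\eta\bp^3\pt^2\eta|^2-|\curl_\eta\bp^3\pt^2\eta|^2-|\dive_\eta\bp^3\pt^2\eta|^2\big)$. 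The curl is then removed by Lemma~\ref{Lcurl} and the divergence by the $J$-energy from $\mathcal{J}_2$. The $\rhol^2$-piece is what you treat in the Lemma~\ref{Lenergy1} style and supplies the boundary term.
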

\colb

\begin{proof}[Proof of Lemma~\ref{Lenergy2}]
Applying $\bp^3 \pt^2$ to \eqref{EQ01} and taking the inner product with $\bp^3 \pt^2 v^i$, we arrive at
\begin{align}
	\begin{split}
		&
		\frac{1}{2} \frac{d}{dt}
		\int_\Omega \rho_0 
		| \bp^3 \pt^2  v|^2 
		+
		\underbrace{
			\int_\Omega \bp^3 \pt^2
			a^k_i  
			(\rho_0^2 J^{-2}),_k 
			\bp^3 \pt^2 v^i
		}_{\mathcal{J}_1}
		+
		\underbrace{\int_\Omega 
			a^k_i (\rho_0^2 \bp^3 \pt^2  J^{-2}),_k 
			\bp^3 \pt^2 v^i}_{\mathcal{J}_2}
		\\&\indeq
		=
		\underbrace{\sum_{l=0}^3
			\sum_{\substack{m=0\\ 1\leq m+l \leq 4}}^2
			c_{l,m}
			\int_\Omega
			\bp^{3-l} \pt^{2-m} a_i^k 
			( \rho_0^2 
			\bp^l \pt^m J^{-2}),_k 
			\bp^3 \pt^2 v^i}_{\mathcal{J}_3}
		.
		\label{EQ691}
	\end{split}
\end{align}

\textit{Estimate of $\JJ_1$ in \eqref{EQ691}}:
Using the decomposition $\rho_0^2= \rhog^2+ \rhol^2$ (see \eqref{EQ184}--\eqref{EQ185}), we split the term $\mathcal{J}_1$ as
\begin{align}
	\begin{split}
		\mathcal{J}_1
		&
		=
		\underbrace{
			\int_\Omega 
			\bp^3 \pt^2 a^k_i 
			( \rhog^2 J^{-2}),_k 
			\bp^3 \pt^2  v^i
		}_{\mathcal{G}}
		+
		\underbrace{
			\int_\Omega 
			\bp^3 \pt^2  a^k_i 
			( \rhol^2 J^{-2} ),_k 
			\bp^3 \pt^2  v^i	
		}_{\mathcal{L}}
		.
		\label{EQ912}
	\end{split}
\end{align}
First we estimate the term $\mathcal{G}$ in \eqref{EQ912}.
Using integration by parts in $\partial_k$,
we arrive at
\begin{align}
	\begin{split}
		\mathcal{G}
		&
		=
		\int_\Gamma
		\bp^3 \pt^2  a^3_i 
		\rhog^2 J^{-2}
		\bp^3 \pt^2  v^i
		-
		\int_{\{x_3 = 0\}}
		\bp^3 \pt^2  a^3_i 
		\rhog^2 J^{-2}
		\bp^3 \pt^2  v^i
		-
		\int_\Omega 
		\bp^3 \pt^2  a^k_i 
		\rhog^2 J^{-2}
		\bp^3 \pt^2  v^i,_k
		,
		\label{EQ35}
	\end{split}
\end{align}
where we used the Piola identity \eqref{piola}.
The first term on the right hand side vanishes since $\rhog = 0$ on $\Gamma \times [0,T]$.
The second term on the right hand side of \eqref{EQ35} also vanishes since $\bp^3 \pt^2 a^3_i \bp^3 \pt^2 v^i = 0$ on $\{x_3 = 0\} \times [0,T]$.
Therefore,
from \eqref{EQ15} and the Leibniz rule it follows that
\begin{align}
	\begin{split}
		\mathcal{G}
		&
		=
		\underbrace{
			\int_\Omega 
			A^s_i A^k_r \bp^3 \pt^2 \eta^r,_s
			\rhog^2 J^{-1}
			\bp^3 \pt^2 v^i,_k
		}_{\mathcal{G}_{1}}
		-
		\underbrace{
			\int_\Omega 
			A^s_r A^k_i  \bp^3 \pt^2 \eta^r,_s
			\rhog^2 J^{-1}
			\bp^3 \pt^2 v^i,_k
		}_{\mathcal{G}_{2}}
		\\&\indeq
		+
		\underbrace{
			\sum_{l=0}^3
			\sum_{\substack{m=0\\m+l \geq 1}}^1
			c_{l,m}
			\int_\Omega 
			\bp^l \pt^m
			(
			J^{-1}
			(a^s_r a^k_i - a^s_i a^k_r) 
			)
			\bp^{3-l}\pt^{2-m} \eta^r,_s
			\rhog^2 J^{-2}
			\bp^3 \pt^2 v^i,_k
		}_{\mathcal{G}_3}
		.
		\label{EQ188}
	\end{split}
\end{align}
It is clear that
\begin{align}
	\begin{split}
		A^s_i A^k_r \bp^3 \pt^2 \eta^r,_s
		\bp^3 \pt^2 v^i,_k
		&
		=	
		A^s_r  \bp^3 \pt^2 \eta^i,_s
		A^k_r \bp^3 \pt^2 \partial_t \eta^i,_k
		\\&\indeq
		+
		(A^s_i \bp^3 \pt^2 \eta^r,_s 
		-
		A^s_r \bp^3 \pt^2 \eta^i,_s)
		A^k_r \bp^3 \pt^2 \partial_t \eta^i,_k
		.
		\label{EQ72}
	\end{split}
\end{align}
The first term on the right hand side can be rewritten as
\begin{align}
	\begin{split}
		A^s_r  \bp^3 \pt^2 \eta^i,_s
		A^k_r \bp^3 \pt^2 \partial_t \eta^i,_k
		&
		=
		[D_\eta \bp^3 \pt^2 \eta]^i_r
		[D_\eta  \bp^3 \pt^2 \partial_t \eta]^i_r
			\\&
		=
		\frac{1}{2}
		\frac{d}{dt}
		|D_\eta \bp^3 \pt^2 \eta|^2
		-
		\frac{1}{2}
		\bp^3 \pt^2 \eta^i,_s \bp^3 \pt^2 \eta^i,_k (A^s_r A^k_r)_t
		,
		\label{EQ74}
	\end{split}
\end{align}
while the second term on the right hand side of \eqref{EQ72} can be rewritten as
\begin{align}
	\begin{split}
		&
		(A^s_i \bp^3 \pt^2 \eta^r,_s 
		-
		A^s_r \bp^3 \pt^2 \eta^i,_s)
		A^k_r \bp^3 \pt^2 \partial_t \eta^i,_k
		=
		-\epsilon_{ijk}
		\bp^3 \pt^2 \eta^k,_r
		A^r_j \epsilon_{iml} 
		\bp^3 \pt^2 \partial_t \eta^l,_s A^s_m
		\\&
		=
		-\curl_\eta \bp^3 \pt^2 \eta \cdot \curl_\eta \bp^3 \pt^2 \partial_t \eta
		\\&
		=
		-
		\frac{1}{2} \frac{d}{dt}
		|\curl_\eta \bp^3 \pt^2 \eta|^2
		+
		\frac{1}{2}
		\bp^3 \pt^2 \eta^k,_r \bp^3 \pt^2 \eta^k,_s (A^r_j A^s_j)_t
		-
		\frac{1}{2}
		\bp^3 \pt^2 \eta^k,_r \bp^3 \pt^2 \eta^j,_s (A^r_j A^s_k)_t
		.
		\label{EQ75}
	\end{split}
\end{align}
Combining \eqref{EQ72}--\eqref{EQ75}, we arrive at
\begin{align*}
	\begin{split}
		\mathcal{G}_1
		&
		=
		\frac{1}{2}
		\int_\Omega
		\rhog^2 J^{-1}
		\frac{d}{dt}
		\left(
		|D_\eta \bp^3 \pt^2 \eta|^2
		-
		|\curl_\eta \bp^3 \pt^2 \eta|^2
		\right)
		+
		\int_\Omega
		Q( \rhog \bp^3 \pt^2 D\eta)
		\\
		&
		=
		\frac{1}{2}
		\frac{d}{dt}
		\int_\Omega
		\rhog^2 J^{-1}
		\left(
		|D_\eta \bp^3 \pt^2 \eta|^2
		-
		|\curl_\eta \bp^3 \pt^2 \eta|^2
		\right)
		+
		\int_\Omega
		Q(\rhog \bp^3 \pt^2 D\eta)
		,
	\end{split}
\end{align*}
where $Q$ is a quadratic function with $L^\infty ( [0,T] \times \Omega)$ coefficients which may vary from line to line.
Integrating $\mathcal{G}_1$ in time from $0$ to $T$, we get
\begin{align*}
	\begin{split}
		\int_0^T 
		\mathcal{G}_1
		=	
		\frac{1}{2}
		\int_\Omega
		\rhog^2 J^{-1} 
		\left(
		|D_\eta \bp^3 \pt^2 \eta (T)|^2
		-
		|\curl_\eta \bp^3 \pt^2 \eta (T)|^2
		\right)
		+
		\int_0^T 
		\int_\Omega
		Q(\rhog \bp^3 \pt^2 D\eta)
		+
		M_0
		.
	\end{split}
\end{align*}
For the term $\mathcal{G}_2$, we have
\begin{align*}
	\begin{split}
		-
		\mathcal{G}_2
		=	
		-\int_\Omega
		\rhog^2 
		J^{-1}
		\dive_\eta \bp^3 \pt^2 \eta
		\dive_\eta \bp^3 \pt^2 \pt \eta
		=
		-\frac{1}{2}
		\frac{d}{dt}
		\int_\Omega
		\rhog^2 
		J^{-1}
		|\dive_\eta \bp^3 \pt^2 \eta|^2
		+
		\int_\Omega
		Q(\rhog \bp^3 \pt^2 D\eta)
		,
	\end{split}
\end{align*}
which leads to
\begin{align}
	\begin{split}
		-\int_0^T
		\mathcal{G}_2	
		=
		-
		\frac{1}{2}
		\int_\Omega
		\rhog^2 J^{-1} |\dive_\eta \bp^3 \pt^2 \eta (T)|^2
		+
		\int_0^T 
		\int_\Omega
		Q(\rhog \bp^3 \pt^2 D\eta)
		+
		M_0
		.
		\label{EQ503}
	\end{split}
\end{align}
From \eqref{EQ20} and the Leibniz rule it follows that
\begin{align}
	\dive_\eta \bp^3 \pt^2 \eta
	=
	A^s_r \bp^3 \pt^2 \eta^r,_s
	=
	J^{-1} \bp^3 \pt^2 J
	+
	\sum_{l=0}^3
	\sum_{\substack{m=0\\l+m\geq 1}}^1
	c_{l,m} J^{-1}
	\bp^l \pt^m a^s _r 
	\bp^{3-l} \pt^{2-m} \eta^r,_s
	.
	\label{EQ207}
\end{align}
Inserting \eqref{EQ207} to the first term on the right hand side of \eqref{EQ503}, we arrive at
\begin{align*}
	\begin{split}
			&
		-
		\frac{1}{2}
		\int_\Omega
		\rhog^2 J^{-1} 
		|\dive_\eta \bp^3 \pt^2 \eta (T)|^2
		=
		-
		\frac{1}{2}
		\int_\Omega
		\rhog^2 J^{-3} | \bp^3 \pt^2 J (T)|^2	
		\\&\indeq
		+
		\sum_{l=0}^3
		\sum_{\substack{m=0\\l+m\geq 1}}^1
			c_{l,m}
			\int_\Omega
			\rhog^2 J^{-3}
			|\bp^l \pt^m a \bp^{3-l} \pt^{2-m} D\eta |^2 (T)
		\\&\indeq\indeq
		+
		\sum_{l=0}^3
		\sum_{\substack{m=0\\l+m\geq 1}}^1
		c_{l,m}
		\int_\Omega
		\rhog^2 J^{-3}
		\bp^3 \pt^2 J 
		\bp^l \pt^m a
		\bp^{3-l} \pt^{2-m} D\eta (T)
		\\&\indeq
		=:
			-
		\frac{1}{2}
		\int_\Omega
		\rhog^2 J^{-3} | \bp^3 \pt^2 J (T)|^2	
		+
		\GG_{21}+\GG_{22}
		,
	\end{split}
\end{align*}
where we drop the indices for simplicity.
The highest order terms in $\GG_{21}$ can be treated using the H\"older and Sobolev inequalities and the fundamental theorem of calculus as
\begin{align*}
	\begin{split}
	\int_\Omega \rhog^2 J^{-3} |\pt D\eta \bp^3 \pt D\eta|^2 (T)
	\les
	\int_\Omega  
	|\rhog \bp^3 \pt D\eta|^2 (T)
	\les
	1+
	TP(\sup_{t\in [0,T]} E(t))
	\end{split}
\end{align*}
and
\begin{align*}
	\begin{split}
		\int_\Omega \rhog^2 J^{-3} 
		|\bp D\eta \bp^2 \pt^2 D\eta|^2 (T)
		\les
		\int_\Omega  
		| \bp^2 \pt^2 D\eta|^2 (T)
		\les
		1+
		TP(\sup_{t\in [0,T]} E(t))
	,
	\end{split}
\end{align*}
since $\Vert \rhog \bp^3 \pt^2 D\eta\Vert_0^2 + \Vert v\Vert_4^2 + \Vert \pt^2 v\Vert_3^2 \leq E(t)$.
The rest of the terms in $\GG_{21}$ are of lower order which can be treated in a similar fashion using the H\"older and Sobolev inequalities and the fundamental theorem of calculus.
Thus, we arrive at
\begin{align}
	\GG_{21}
	\les
	1+TP(\sup_{t\in [0,T]} E(t))
	.
	\label{EQ119}
\end{align}
For the term $\GG_{22}$, using the Young inequality, we obtain
\begin{align*}
	\begin{split}
		\GG_{22}
		&
		\les
		\delta
		\Vert \rhog \bp^3 \pt^2 J (T)\Vert_0^2
		+
		C_\delta
			\sum_{l=0}^3
		\sum_{\substack{m=0\\l+m\geq 1}}^1
		\Vert \rhog J^{-3} 
		\bp^l \pt^m a 
		\bp^{3-l} \pt^{2-m} D\eta \Vert_0^2 (T)
		\\&
		\les
		C_\delta
		+
		\delta \sup_{t \in [0,T]} E(t)
		+
		C_\delta TP(\sup_{t \in [0,T]} E(t))
		,
	\end{split}
\end{align*}
where the last inequality follows from \eqref{EQ119}.
For the term $\GG_3$ in \eqref{EQ188}, we integrate by parts in time, leading to
\begin{align*}
	\begin{split}
	\int_0^T
	\GG_3
	&
	=
	\underbrace{
	\sum_{l=0}^3
	\sum_{\substack{m=0\\l+m\geq 1}}^1
	c_{l,m}
	\int_\Omega 
	\bp^l \pt^m
	(
	J^{-1}
	(a^s_r a^k_i - a^s_i a^k_r) 
	)
	\bp^{3-l} \pt^{2-m}
	\eta^r,_s
	\rhog^2 J^{-2}
	\bp^3 \pt^2 \eta^i,_k \big|^{t=T}_{t=0}}_{\GG_{31}}
	\\&\indeq
	+
	\underbrace{
	\sum_{l=0}^3
\sum_{\substack{m=0\\l+m\geq 1}}^1
c_{l,m}
	\int_0^T
	\int_\Omega 
	(\bp^l \pt^m
	(
	J^{-1}
	(a^s_r a^k_i - a^s_i a^k_r) 
	)
	\bp^{3-l} \pt^{2-m} \eta^r,_s
	\rhog^2 J^{-2})_t
	\bp^3 \pt^2 \eta^i,_k}_{\GG_{32}}
	.
	\end{split}
\end{align*}
We bound the term $\GG_{31}$ using the Young and Sobolev  inequalities and the fundamental theorem of calculus as
\begin{align*}
	\begin{split}
		\GG_{31}
		&
		\les
		\delta \Vert \rhog \bp^3 \pt^2 D\eta (T)\Vert_0^2
		+
		C_\delta
		\sum_{l=0}^3
		\sum_{\substack{m=0\\l+m\geq 1}}^1
		\sum_{i,k=1}^3
		\Vert 	\rhog
		\bp^l \pt^m
		(
		J^{-1}
		(a^s_r a^k_i - a^s_i a^k_r) 
		)
		\bp^{3-l} \pt^{2-m} \eta^r,_s
		\Vert_0^2 (T)
		+
		1
		\\&
		\les
		C_\delta
		+	
		\delta \sup_{t\in [0,T]} E(t)
		+
		TP(\sup_{t\in [0,T]} E(t))
		,
	\end{split}
\end{align*}
since $\Vert \rhog \bp^3 \pt^2 D\eta\Vert_0^2 + \Vert \pt^2 J\Vert_3^2 + \Vert \pt^2 v\Vert_3^2 \leq E(t)$.
For the term $\mathcal{G}_{32}$, we use the H\"older and Sobolev  inequalities and the fundamental theorem of calculus, obtaining
\begin{align*}
	\begin{split}
		\GG_{32}
	&
	\les
	\sum_{l=0}^3
	\sum_{\substack{m=0\\l+m\geq 1}}^1
	\sum_{i,k=1}^3
	\int_0^T
	\Vert
	\rhog 
	(\bp^l \pt^m
	(
	J^{-1}
	(a^s_r a^k_i - a^s_i a^k_r) 
	)
	\bp^{3-l} \pt^{2-m} \eta^r,_s J^{-2})_t \Vert_0
	\Vert \rhog \bp^3 \pt^2 D\eta\Vert_0
	\\&
	\les
	T P(\sup_{t\in [0,T]} E(t))
	.
	\end{split}
\end{align*}
Collecting the above estimates, we conclude
\begin{align}
	\begin{split}
		&
		\int_0^T
		\GG
		=
		\frac{1}{2}
		\int_\Omega
		\left(
		\rhog^2 
		J^{-1} 
		|D_\eta \bp^3 \pt^2 \eta (T)|^2
		-
		\rhog^2
		J^{-1} 
		| \curl_\eta \bp^3 \pt^2 \eta (T)|^2
		-
		\rhog^2
		J^{-3} 
		|\bp^3 \pt^2 J(T)|^2
		\right)
		+
		\GG'
		,
		\label{EQ992}	
	\end{split}
\end{align}
where $\GG'$ consists of the terms satisfying
\begin{align}
	\GG'
	\les
	C_\delta
	+
	\delta \sup_{t\in [0,T]} E(t)
	+
	C_\delta T P(\sup_{t\in [0,T]} E(t))
	.
	\label{EQ995}	
\end{align}
Next we estimate the term $\LL$ in \eqref{EQ912}.
From \eqref{EQ15} and the Leibniz rule it follows that
\begin{align}
	\begin{split}
		\LL
		&
		=
		-
		\underbrace{
			\int_\Omega
			J^{-1}  a^s_i a^k_r 
			\bp^3 \pt^2 \eta^r,_s
			(\rhol^2 J^{-2}),_k 
		\bp^3 \pt^2 v^i}_{\LL_{1}}
		+
		\underbrace{
			\int_\Omega 
			J^{-1} a^s_r a^k_i 
			\bp^3 \pt^2  \eta^r,_s
			( \rhol^2 J^{-2}),_k 
		\bp^3 \pt^2 v^i}_{\LL_{2}}
		\\&\indeq
		+
		\underbrace{
			\sum_{l=0}^3
			\sum_{\substack{m= 0\\l+m\geq 1}}^1
			c_{l,m}
			\int_\Omega
			\bp^l
			\pt^m
			( J^{-1} (a^s_r a^k_i - a^s_i a^k_r))
			\bp^{3-l}
			\pt^{2-m} 
			\eta^r,_s
			( \rhol^2 J^{-2}),_k 
			\bp^3 \pt^2 v^i}_{\LL_{3}}
		.
		\label{EQ940}
	\end{split}
\end{align}
For the term $\LL_{1}$, we integrate by parts in $\partial_s$, obtaining
\begin{align}
	\begin{split}
		&
	-
	\int_0^T
	\LL_{1}
	=	
	-
	\underbrace{\int_0^T
	\int_{ \Gamma }
	J^{-1}  a^3_i a^3_r
	\bp^3 \pt^2 \eta^r
	( \rhol^2 J^{-2} ),_3
	\bp^3 \pt^2 v^i }_{\LL_{11}}
	\\&\indeq
	+
	\underbrace{	\int_0^T
	\int_\Omega
	J^{-1}  a^s_i a^k_r
	\bp^3 \pt^2 \eta^r
	(\rhol^2 J^{-2} ),_k
	\bp^3 \pt^2 v^i,_s}_{\LL_{12}}
	+
	\underbrace{\int_0^T
	\int_\Omega
	(J^{-1}  a^s_i a^k_r
	(\rhol^2 J^{-2} ),_k
	),_s
	\bp^3 \pt^2 \eta^r
	\bp^3 \pt^2 v^i}_{\LL_{13}}
	,
	\label{EQ930}
	\end{split}
\end{align}
where we note that $a^k_r(\rhol^2 J^{-2}),_k =a^3_r(\rhol^2 J^{-2}),_3$ since 
$\rhol^2 J^{-2}=1$ on $\Gamma \times [0,T]$.
The term $\LL_{11}$ can be rewritten as
\begin{align*}
	\begin{split}
	-
	\LL_{11}
	&
	=
	-
	\underbrace{\frac{1}{2}
	\int_0^T
	\frac{d}{dt}
	\int_\Gamma
	J^{-1} 
	(\rhol^2 J^{-2}),_3
	|\bp^3 \pt^2 \eta^r a^3_r|^2}_{\LL_{111}}
	+
	\underbrace{\frac{1}{2} 
	\int_0^T
	\int_{\Gamma}
	( J^{-1} 
	(\rhol^2 J^{-2}),_3)_t
	|\bp^3 \pt^2 \eta^r a^3_r|^2
	}_{\LL_{112}}
	\\
	&\indeq
	+
	\underbrace{\int_0^T
	\int_{\Gamma}
	J^{-1} 
	(\rhol^2 J^{-2}),_3
	\bp^3 \pt^2 \eta^r \pt a^3_r
	\bp^3 \pt^2 \eta^i a^3_i
	}_{\LL_{113}}
	.
	\end{split}
\end{align*}
Note that from \eqref{EQ185} we may assume that 
\begin{align}
	-J^{-1} 
	(\rhol^2 J^{-2}),_3 
	=
	-J^{-1} 
	(\rhol^2,_3 J^{-2} 
	-
	2 J^{-3} J,_3
	\rhol^2 ) 
	\geq 
	\frac{1}{2}
	\inon{on~$[0,T]\times \Gamma$}
	,
	\label{EQ284}
\end{align}
from where
\begin{align}
	\begin{split}
	\LL_{111}
	&
	=	
	-\frac{1}{2}
	\int_\Gamma
	J^{-1} 
	(\rhol^2 J^{-2} ),_3
	|\bp^3 \pt^2 \eta^r  a^3_r |^2
	(T)
	+
	M_0
	\geq
	\frac{1}{4}
	\int_\Gamma
	|\bp^3 \pt^2 \eta^r a^3_r |^2 (T)
	+
	M_0
	.
	\label{EQ996}
	\end{split}
\end{align}
Using the H\"older and Sobolev inequalities, we get
\begin{align*}
	\begin{split}
	\LL_{112}
	+
	\LL_{113}
	&
	\les
	\int_0^T
	(\Vert DJ_t\Vert_{L^\infty}
	+
	1)
	| \bp^3 \pt^2 \eta|_0^2
	\les
	T P(\sup_{t\in [0,T]} E(t))
	,
	\end{split}
\end{align*}
since $\vert \bp^3 \pt^2 \eta\vert_0^2 \leq E(t)$.

Now we claim that
\begin{align}
	\begin{split}
		\sup_{t\in [0,T]}
		\vert \bp^3 \pt^2 \eta^\alpha (t)
		\vert_{0}^2
		\les
		C_\delta
		+
		\delta \sup_{t\in [0,T]} E(t)
		+
		C_\delta T P(\sup_{t\in [0,T]} E(t))
		,
		\label{EQ982}
	\end{split}
\end{align}
where $\alpha=1,2$.
To prove the claim, we resort to the boundary dynamics for the flow map $\eta$. In fact, from the momentum equation \eqref{EQ200}, we see that $\eta$ satisfies 
\begin{align*}
	\pt^2 \eta^i  -2  a^3_i J,_3 -2 a^3_i 
	=
	0
	\inon{on~$[0,T]\times \Gamma$}
	\comma
	i=1,2,3,
\end{align*}
since $\rho_0,_3=-1$, $\rho_0,_\alpha = 0$, and $J=\rho_0=1$ on $[0,T]\times \Gamma$. 
Then from 
\begin{align*}
	a^3_3\pt^2 \eta^\alpha 
	=
	a^3_3( 2 a^3_\alpha J,_3 
	+
	2 a^3_\alpha ) 
	= 
	a^3_\alpha (2 a^3_3 J,_3 + 2a^3_3)
	=  a^3_\alpha  \pt^2 \eta^3
	\comma
	\alpha=1,2,
\end{align*}
we deduce the following algebraic relation between $\pt^2\eta^\alpha$ and $\pt^2 \eta^3$:  
\begin{equation}
	\pt^2\eta^\alpha 
	= 
	(a^3_3)^{-1} a^3_\alpha \pt^2 \eta^3
	\inon{on~$[0,T]\times \Gamma$}
	\comma
	\alpha=1,2
	.
	\label{EQLJ}
\end{equation}
Applying $\bp^{3}$ to the above equation, we get $\pt^{2} \bp^3 \eta^\alpha 
=
\bp^3 \left[ (a^3_3)^{-1} a^3_\alpha \pt^2 \eta^3 \right] $, 
which leads to 
\begin{align}
	\begin{split}
	\vert \pt^{2} \bp^3
	\eta^\alpha \vert_0^2
	&
	\les
	\underbrace{
	\vert (a^3_3)^{-1} a^3_\alpha 
	\bp^3 \pt^{2} 
	\eta^3 \vert_0^2}_{\II_1}
	+
	\underbrace{ 
	\vert \bp^3 
	((a^3_3)^{-1} a^3_\alpha )
	\pa_t^2 \eta^3 \vert_0^2}_{\II_2} 
	+
	\underbrace{
	\vert  \bp  ((a^3_3)^{-1} a^3_\alpha )
	\bp^{2} \pt^2 \eta^3 \vert_0^2}_{\II_{3}}
	\\&\indeq
	+
	\underbrace{
	\vert  \bp^2  ((a^3_3)^{-1} a^3_\alpha )
	\bp  \pt^2 \eta^3 \vert_0^2}_{\II_4}
	.
	\label{EQ232}
	\end{split}
\end{align}
The term $\II_1$ is estimated using the H\"older and Sobolev inequalities as
\begin{equation} 
	\mathcal{I}_1 
	\les 
	\Vert a^3_\alpha\Vert_{L^\infty}
	\vert \bp^3 \pt^{2}\eta^3  
	\vert_0^2 
	\les 
	T P(\sup_{t\in [0,T]} E(t)), 
	\label{EQ984}
\end{equation}
where we used $\Vert a - I_3\Vert_{2} \les T$.
For the term $\mathcal{I}_2$, using the Leibniz rule, we get
\begin{align}
	\begin{split}
		\mathcal{I}_2
		&
		\les
		\underbrace{
			\vert \bp^3 (a^3_3)^{-1}
			a^3_\alpha
			\vert_0^2}_{\II_{21}}
		+	
		\underbrace{
			\vert (a^3_3)^{-1}
			\bp^3 a^3_\alpha
			\vert_0^2}_{\II_{22}}
		+	
		\underbrace{
			\sum_{l=1}^2
			\vert \bp^l (a^3_3)^{-1}
			\bp^{3-l} a^3_\alpha
			\vert_0^2}_{\II_{23}}
		.
		\label{EQ989}
	\end{split}
\end{align}
Using the H\"older and Sobolev inequalities and the fundamental theorem of calculus, we arrive at
\begin{align}
	\II_{21}
	\les
	\vert \bp^3 (a^3_3)^{-1}
	\vert_0^2
	\Vert
	a^3_\alpha
	\Vert_{L^\infty}^2
	\les
	TP(\sup_{t \in [0,T]} E(t))
	,
\end{align}
where we used
$
	\vert \bp^4 \eta\vert_0^2
	\les
	\Vert \bp^3 \eta\Vert_{1.5}^2
$ and \eqref{EQ70}
in the last inequality.
For the term $\II_{22}$, from \eqref{EQ70} and the Leibniz rule it follows that
\begin{align}
	\begin{split}
		\II_{22}
		\les
		\vert \bp^3 (\bp \eta \bp \eta^3)
		\vert_0^2
		\les
		\underbrace{
			\vert \bp \eta  \bp^4 \eta^3
			\vert_0^2}_{\II_{221}}
		+
		\underbrace{	
			\vert \bp^4 \eta  \bp \eta^3
			\vert_0^2}_{\II_{222}}
		+
		\underbrace{	
			\sum_{l=1}^2
			\vert \bp^{l+1} \eta  \bp^{4-l} \eta^3
			\vert_0^2}_{\II_{223}}
		.
	\end{split}
\end{align}
The term $\II_{221}$ is estimated using the H\"older and Sobolev inequalities as
\begin{align}
	\II_{221}
	\les
	\vert \bp^4 \eta^3 \vert_0^2
	\les
	C_\delta
	+
	\delta \sup_{t\in [0,T]} E(t)
	+
	C_\delta T P(\sup_{t\in [0,T]} E(t))
	,
\end{align}
where the last inequality follows from Lemma~\ref{Lenergy1}.
For the term $\II_{222}$, we use the H\"older inequality to get
\begin{align}
	\II_{222}
	\les
	 \vert \bp^4 \eta\vert_0^2
	\vert \bp \eta^3 \vert_{L^\infty (\Gamma)}^2
	\les
	T
	P(\sup_{t \in [0,T]} E(t))
	,
\end{align}
since $\Vert \bp \eta^3 \Vert_{L^\infty (\Gamma)} \les T$ which holds by the fundamental theorem of calculus.
The term $\II_{223}$ is estimated using the H\"older and Sobolev inequalities and the fundamental theorem of calculus as
\begin{align}
	\begin{split}
	\II_{223}
	\les
	\Vert \bp^2 \eta\Vert_{L^\infty (\Omega)}^2
	\vert \bp^3 \eta\vert_0^2	
	\les
	1+ TP(\sup_{t \in [0,T]} E(t))
	.
	\end{split}
\end{align}
Similarly, we bound the term $\II_{23}$ as
\begin{align}
	\II_{23}
	\les
	1+ TP(\sup_{t \in [0,T]} E(t))
	.
\end{align}
For the term $\II_3$, using the H\"older and Sobolev inequalities, we obtain
\begin{align}
	\II_3
	\les
	\vert \bp^2 \pt^2 \eta\vert_0^2
	\les
	1+ TP(\sup_{t \in [0,T]} E(t))
	,
\end{align}
since $\vert \bp^2 \pt^3 \eta\vert_0^2 \leq E(t)$,
where we used the fundamental theorem of calculus in the last inequality.
The highest order term in $\II_4$ is scales like $\vert \bp^3 \eta \bp \pt^2 \eta^3 \vert_0^2$
which is treated using the H\"older and Sobolev inequalities as
\begin{align*}
	\begin{split}
		\vert \bp^3 \eta \bp \pt^2 \eta^3 \vert_0^2
	\les
	\vert \bp^3 \eta \vert_{L^4 (\Gamma)}^2
	\vert
	\bp \pt^2 \eta^3 \vert_{L^4 (\Gamma)}^2
	\les
	\Vert \bp^3 \eta\Vert_1^2
	\Vert \pt^2 \eta\Vert_2^2
	\les
	1+ TP(\sup_{t \in [0,T]} E(t))
	,	
	\end{split}
\end{align*}
where we used the Sobolev embedding $H^{0.5}(\Gamma
) \hookrightarrow L^4(\Gamma)$, Lemma~\ref{trace}, and the fundamental theorem of calculus.
The rest of the terms in $\II_4$ are of lower order which can treated in a similar fashion as above.
Thus, we have
\begin{align}
	\II_4
	\les
	1+TP(\sup_{t\in [0,T]} E(t))
	.
	\label{EQ233}
\end{align}
Collecting the estimates \eqref{EQ232}--\eqref{EQ233}, we complete the proof of the claim \eqref{EQ982}.

Next we estimate the term $\LL_{12}$ in \eqref{EQ930}. 
Using integration by parts in time, we obtain
\begin{align*}
	\begin{split}
	\LL_{12}
		&
		=	-
		\underbrace{	
			\int_0^T
			\int_\Omega
			J^{-1}  a^s_i a^k_r
			\bp^3 \pt^2 v^r
			( \rhol^2 J^{-2} ),_k
			\bp^3 \pt^2 \eta^i,_s
		}_{\LL_{121}}
		+
		\underbrace{
			\int_\Omega J^{-1}  a^s_i a^k_r
			\bp^3 \pt^2 \eta^r 
			( \rhol^2 J^{-2} ),_k
			\bp^3 \pt^2 \eta^i,_s
			\big|_0^T
		}_{\LL_{122}}
		\\&\indeq
		-
		\underbrace{	
			\int_0^T
			\int_\Omega
			(	J^{-1}  a^s_i a^k_r
			( \rhol^2 J^{-2} ),_k)_t
			\bp^3 \pt^2  \eta^r
			\bp^3 \pt^2 \eta^i,_s
		}_{\LL_{123}}
		.
	\end{split}
\end{align*}
It is clear that
$
-
\LL_{121}
+
\int_0^T
\LL_{2}
=
0
$.
Note that from \eqref{EQ20} and the Leibniz rule it follows that
\begin{align}
	\begin{split}
	\bp^3 \pt^2 J 
	&
	=
	a^s_i \bp^3 \pt^2  \eta^i,_s
	+
	\bp^3 \pt a^s_i  \pt \eta^i,_s
	+
	\pt a^s_i \bp^{3} \pt \eta^i,_s
	\\&\indeq
	+
	\sum_{l=0}^3 \sum_{\substack{m=0\\ 1\leq l+m\leq 3\\(l,m) \neq (0,1)}}^1 
	c_{l,m} \bp^l \pt^m a^s_i \bp^{3-l} \pt^{2-m} \eta^i,_s
	.
	\label{EQ289}
	\end{split}
\end{align}
Inserting \eqref{EQ289} into the term $\LL_{122}$ and using the Young, H\"older, Sobolev inequalities and the fundamental theorem of calculus, we arrive at
\begin{align*}
	\begin{split}
		\LL_{122}
		&
		\les
		1+
		\delta
		\Vert  \bp^3 \pt^2 J (T)\Vert_0^2 
		+
		\delta
		\Vert
		\bp^3 \pt a \pt D\eta 
		\Vert_0^2 (T)
		+
		C_\delta
		\Vert  \bp^3 \pt^2 \eta (T)\Vert_{0}^2 
		\\&\indeq
		+
		\sum_{l=0}^3 \sum_{\substack{m=0\\1\leq l+m\leq 3, \\(l,m)\neq (0,1)}}^1 
		\Vert
		\bp^l \pt^m a \bp^{3-l} \pt^{2-m} D\eta 
		\Vert_0^2  (T)
		\les
		C_\delta
		+
		\delta \sup_{t\in [0,T]} E(t)
		+
		C_\delta T P(\sup_{t\in [0,T]} E(t))
		,
	\end{split}
\end{align*}
where we recall that $\Vert v\Vert_{4}^2 +\Vert \pt^2 v\Vert_{3}^2 + \Vert \pt^2 J\Vert_3^2 \leq E(t)$.
For the term $\LL_{123}$, we appeal to the H\"older and Sobolev inequalities and Lemma~\ref{duality}, obtaining
\begin{align*}
	\begin{split}
		\LL_{123}
		&
		\les
		\int_0^T
		\Vert \bp^3 \pt^2 \eta\Vert_{0.5}
		\Vert \bp^3 \pt^2 D \eta\Vert_{-0.5}
		\les
		\int_0^T
		\Vert \bp^2 \pt^2 \eta\Vert_{1.5}^2
		\les
		T P(\sup_{t\in [0,T]} E(t))
		,
	\end{split}
\end{align*}
since $\Vert \bp^2 \pt^2 \eta\Vert_{1.5}^2 \leq E(t)$.
The term $\LL_{13}$ in \eqref{EQ930} is estimated using the H\"older and Sobolev inequalities as
\begin{align*}
	\begin{split}
		\LL_{13}
		\les
		\int_0^T
		(1+ \Vert D^2 J\Vert_{L^\infty})
		\Vert \bp^3 \pt^2 \eta\Vert_0
		\Vert \bp^3 \pt^2 v\Vert_0
		\les
		T P(\sup_{t \in [0,T]} E(t))
		,
	\end{split}
\end{align*}
since $\Vert J\Vert_4^2 \leq E(t)$.
The term $\LL_{3}$ in \eqref{EQ940} consists of essentially lower order terms which can be estimated using the H\"older and Sobolev inequalities and the fundamental theorem of calculus as
\begin{align*}
	\int_0^T
	\LL_{3}
	\les
	TP(\sup_{t\in [0,T]} E(t))
	.
\end{align*}
Collecting \eqref{EQ691}, \eqref{EQ992}--\eqref{EQ995}, and \eqref{EQ996} and the above estimates, we conclude that
\begin{align*}
	\begin{split}
		&
		\frac{1}{2}
		\int_\Omega
		\rho_0
		|\bp^3 \pt^2 v|^2 (T)
		+
		\rhog^2 
		J^{-1} 
		|D_\eta \bp^3 \pt^2 \eta (T)|^2
		-
		\rhog^2
		J^{-1} 
		| \curl_\eta \bp^3 \pt^2 \eta (T)|^2
		-
		\rhog^2
		J^{-3} 
		|\bp^3 \pt^2 J(T)|^2
		\\&\indeq
		+
		\frac{ 1}{4}
		\int_\Gamma
		|\bp^3 \pt^2 \eta^r  a^3_r |^2 (T)
		+
		\int_0^T
		\mathcal{J}_2
		\les
		C_\delta
		+
		\delta \sup_{t\in [0,T]} E(t)
		+
		C_\delta T P(\sup_{t\in [0,T]} E(t))
		+
		\int_0^T
		\mathcal{J}_3
		.
	\end{split}
\end{align*}
Using $|\bp^3 \pt^2 \eta^r a^3_r| \geq |\bp^3 \pt^2 \eta^3 a^3_3| - |\bp^3 \pt^2 \eta^\alpha a^3_\alpha|$ and $\Vert a - I_3 \Vert_2 \les T$, we obtain 
\begin{align}
	\begin{split}
		&
		\frac{1}{2}
		\int_\Omega
		\left(
		\rho_0
		|\bp^3 \pt^2 v|^2 (T)
		+
		\rhog^2 
		J^{-1} 
		|D_\eta \bp^3 \pt^2 \eta (T)|^2
		-
		\rhog^2
		J^{-1} 
		| \curl_\eta \bp^3 \pt^2 \eta (T)|^2
		-
		\rhog^2
		J^{-3} 
		|\bp^3 \pt^2 J(T)|^2
		\right)
		\\&\indeq
		+
		\frac{ 1}{8}
		\int_\Gamma
		|\bp^3 \pt^2 \eta  |^2 (T)
		+
		\int_0^T
		\mathcal{J}_2
		\les
		C_\delta
		+
		\delta \sup_{t\in [0,T]} E(t)
		+
		C_\delta T P(\sup_{t\in [0,T]} E(t))
		+
		\int_0^T
		\mathcal{J}_3
		.
		\label{EQ994}
	\end{split}
\end{align}
where we used \eqref{EQ982} in the last inequality.

\textit{Estimate of $\JJ_2$ in \eqref{EQ691}}:
We integrate by parts in $\partial_k$ and use the Piola identity \eqref{piola} to get
\begin{align}
	\begin{split}
		\mathcal{J}_2
		&
		=
		\int_{\Gamma}
		a^3_i \rho_0^2 \bp^3 \pt^2J^{-2}
		\bp^3 \pt^2 v^i
		-
		\int_{\{x_3=0\}}
		a^3_i \rho_0^2 \bp^3 \pt^2 J^{-2}
		\bp^3 \pt^2 v^i 
		-
		\int_\Omega
		a^k_i \rho_0^2 \bp^3 \pt^2 J^{-2}
		\bp^3 \pt^2 v^i,_k
		\\&
		=
		-
		\int_\Omega
		a^k_i \rho_0^2 \bp^3 \pt^2 J^{-2}
		\bp^3 \pt^2 v^i,_k
		,		
		\label{EQ963}
	\end{split}
\end{align}
since $\bp^3 \pt^2 J^{-2}=0$ on $\Gamma \times [0,T]$ and $a^3_i \bp^3 \pt^2 v^i = 0$ on $\{x_3=0\} \times [0,T]$.
Note that from \eqref{EQ20} and the Leibniz rule it follows
\begin{align}
	\begin{split}
	\bp^3 \pt^3 J
	&
	=
	a^k_i \bp^3 \pt^2 v^i,_k
	+
	v^i,_k
	\bp^3 \pt^2 a^k_i 
	+
	2
	\pt a^k_i  \bp^{3} \pt v^i,_k
	\\&\indeq
	+
	\sum_{l=0}^3
	\sum_{\substack{m=0\\ 1\leq m+l\leq 4\\(l,m) \neq (0,1)}}^2
	c_{l,m}
	\bp^l \pt^m a^k_i  \bp^{3-l} \pt^{2-m} v^i,_k
	.
	\label{EQ980}
	\end{split}
\end{align}
Combing \eqref{EQ963}--\eqref{EQ980} , we arrive at
\begin{align*}
	\begin{split}
		\mathcal{J}_2
		&
		=
		2\int_\Omega
		\rho_0^2
		a^k_i 
		J^{-3} \bp^3 \pt^2 J	
		\bp^3 \pt^2 v^i,_k
		+
		\underbrace{
			\sum_{l=0}^3
		\sum_{\substack{m=0\\l+m\geq 1}}^1
		c_{l,m}
		\int_\Omega
		\rho_0^2
		a^k_i  
		\bp^l \pt^m J^{-3}
		\bp^{3-l} \pt^{2-m} J	\bp^3 \pt^2 v^i,_k}_{\JJ_{25}}
		\\
		&
		=
		\underbrace{2\int_\Omega
			\rho_0^2
			J^{-3}
			\bp^3 \pt^2 J
			\bp^3 \pt^3 J
		}_{\mathcal{J}_{21}}
			-
	\underbrace{
		\int_\Omega
		\rho_0^2
		J^{-3}
		\bp^3 \pt^2 J
		v^i,_k \bp^3 \pt^2 a^k_i 
	}_{\mathcal{J}_{22}}
-
\underbrace{
	2
	\int_\Omega
	\rho_0^2
	J^{-3}
	\bp^3 \pt^2 J
	\pt a^k_i \bp^3 \pt v^i,_k 
}_{\mathcal{J}_{23}}
\\&\indeq
	+
	\underbrace{
		\sum_{l=0}^3
		\sum_{\substack{m=0\\ 1\leq m+l\leq 4\\ (l,m) \neq (0,1)}}^2
		c_{l,m}
		\int_\Omega
		\rho_0^2 J^{-3}
		\bp^3 \pt^2 J
		\bp^l \pt^m a^k_i  \bp^{3-l} \pt^{2-m} v^i,_k
	}_{\mathcal{J}_{24}}
	+
	\JJ_{25}
		.
	\end{split}
\end{align*}
The term $\JJ_{21}$ can be rewritten as
\begin{align}
	\begin{split}
		\mathcal{J}_{21}
		&
		=
		\frac{d}{dt}
		\int_\Omega
		\rho_0^2
		J^{-3}
		|\bp^3 \pt^2 J|^2
		-
		\underbrace{\int_\Omega
		(\rho_0^2 J^{-3})_t
		|\bp^3 \pt^2 J|^2}_{\JJ'_{21}}
		,
		\label{EQ910}
	\end{split}
\end{align}
where the term $\JJ'_{21}$ satisfies
\begin{align*}
	\int_0^T
	\JJ'_{21}
	\les
	TP(\sup_{t\in [0,T]} E(t))
	.
\end{align*}
For the term $\mathcal{J}_{22}$, we integrate by parts in time, obtaining
\begin{align}
	\begin{split}
	-\int_0^T \JJ_{22}
	=	
	\underbrace{\int_0^T		
	\int_\Omega
	\bp^3 \pt J
	(	\rho_0^2
	J^{-3}
	v^i,_k \bp^3 \pt^2 a^k_i )_t}_{\JJ_{221}}
	+
\underbrace{	\int_\Omega
	\bp^3 \pt J
	\rho_0^2
	J^{-3}
	v^i,_k \bp^3 \pt^2 a^k_i 
	|^{t=0}_{t=T}}_{\JJ_{222}}
.
\label{EQ201}
	\end{split}
\end{align}
For the term $\JJ_{221}$, we integrate by parts in $\bp$ to get
\begin{align}
	\begin{split}
	\JJ_{221}
	&
	=	
	\int_0^T		
	\int_\Omega
	\bp^3 \pt J
	(\rho_0^2
	J^{-3} v^i,_k ),_t
	\bp^3 \pt^2 a^k_i 
	+
	\int_0^T		
	\int_\Omega
	\bp^3 \pt J
		\rho_0^2
	J^{-3}
	v^i,_k \bp^3 \pt^3 a^k_i 
	\\&
	=
	-
	\int_0^T		
	\int_\Omega
	\bp (\bp^3 \pt J
	(\rho_0^2
	J^{-3}
	v^i,_k),_t )
	\bp^2 \pt^2 a^k_i 
	-
	\int_0^T		
	\int_\Omega
	\bp (\bp^3 \pt J
	\rho_0^2
	J^{-3}
	v^i,_k )
	\bp^2 \pt^3 a^k_i 
	,
	\end{split}
\end{align}
since integration by parts in $\bp$ does not produce any boundary terms. Therefore,
using the H\"older and Sobolev inequalities and the fundamental theorem of calculus, we obtain
\begin{align}
	\JJ_{221}
	\les
	TP(\sup_{t \in [0,T]} E(t))
	,
\end{align}
since $\Vert \pt J\Vert_4^2 + \Vert \pt^2 v\Vert_3^2 \leq E(t)$.
For the term $\JJ_{222}$, we integrate by parts in $\bp$, obtaining
\begin{align}
	\begin{split}
	\JJ_{222}
	&
	=	
	\int_\Omega
	\bp (\bp^3 \pt J
	\rho_0^2
	J^{-3}
	v^i,_k )
	\bp^2 \pt^2 a^k_i 
	|^{t=T}_{t=0}
	\\&
	=
		\int_\Omega
	\bp^4 \pt J
	\rho_0^2
	J^{-3}
	v^i,_k 
	\bp^2 \pt^2 a^k_i 
	|^{t=T}_{t=0}
	+
		\int_\Omega
	\bp^3 \pt J
	\bp (\rho_0^2
	J^{-3}
	v^i,_k )
	\bp^2 \pt^2 a^k_i 
	|^{t=T}_{t=0}
	.
	\end{split}
\end{align}
From the Young, H\"older, and Sobolev inequalities and the fundamental theorem of calculus it follows that
\begin{align}
	\begin{split}
	\JJ_{222}
	\les
	C_\delta
	+
	\delta \sup_{t\in [0,T]} E(t)
	+
	C_\delta T P(\sup_{t\in [0,T]} E(t))
	,
	\label{EQ203}
	\end{split}
\end{align}
since $\Vert \pt^2 J\Vert_3^2 + \Vert \pt^2 v\Vert_3^2 \leq E(t)$.
For the term $\JJ_{23}$, we proceed analogously as in \eqref{EQ201}--\eqref{EQ203}, obtaining
\begin{align*}
	\begin{split}
		-
		\int_0^T
		\JJ_{23}
		\les
		C_\delta
		+
		\delta \sup_{t\in [0,T]} E(t)
		+
		C_\delta T P(\sup_{t\in [0,T]} E(t))
		.
	\end{split}
\end{align*}
The term $\JJ_{24}$ consists of essentially lower order terms which can be estimated using the H\"older and Sobolev inequalities and the fundamental theorem of calculus as
\begin{align*}
	\begin{split}
		\int_0^T
	\JJ_{24}
	\les
		C_\delta
	+
	\delta \sup_{t\in [0,T]} E(t)
	+
	C_\delta T P(\sup_{t\in [0,T]} E(t))
	.
	\end{split}
\end{align*}
For the term $\JJ_{25}$, we integrate by parts in $\partial_k$, obtaining
\begin{align}
	\begin{split}
		\JJ_{25}
	&	=
	\sum_{l=0}^3
	\sum_{\substack{m=0\\l+m\geq 1}}^1
	c_{l,m}
	\int_\Omega
	(\rho_0^2
	a^k_i  
	\bp^l \pt^m J^{-3}
	\bp^{3-l} \pt^{2-m} J	),_k
	\bp^3 \pt^2 v^i
	,
	\label{EQ220}
	\end{split}
\end{align}
from where
\begin{align*}
	\int_0^T
	\JJ_{25}
	\les
	1+TP(\sup_{t \in [0,T]} E(t))
	.
\end{align*}
Combining \eqref{EQ994}, \eqref{EQ910}, and the above estimates, we arrive at
\begin{align}
	\begin{split}
		&
		\frac{1}{2}
		\int_\Omega
		(
		\rho_0
		|\bp^3 \pt^2 v|^2 (T)
		+
		\rhog^2 
		J^{-1} 
		|D_\eta \bp^3 \pt^2 \eta (T)|^2
		-
		\rhog^2
		J^{-1} 
		| \curl_\eta \bp^3 \pt^2 \eta (T)|^2
								\\&\indeq
		+
		\rho_0^2
		J^{-3} 
		|\bp^3 \pt^2 J(T)|^2
		)
		+
		\frac{ 1}{8}
		\int_\Gamma
		|\bp^3 \pt^2 \eta  |^2 (T)
		\\&
		\les
		C_\delta
		+
		\delta \sup_{t\in [0,T]} E(t)
		+
		C_\delta T P(\sup_{t\in [0,T]} E(t))
		+
		\int_0^T
		\mathcal{J}_3
		,
		\label{EQ960}
	\end{split}
\end{align}
since $\rhog^2 \leq \rho_0^2$.

\textit{Estimate of $\JJ_3$ in \eqref{EQ691}}:
The term $\JJ_3$ consists of essentially lower order terms which can be treated in a similar fashion as in \eqref{EQ220}, and we obtain
\begin{align}
	\begin{split}
		\JJ_{3}
		\les
		1+T P(\sup_{t\in [0,T]} E(t))
		.
		\label{EQ961}
	\end{split}
\end{align}
\textit{Concluding the proof}:
Combining \eqref{EQ960}--\eqref{EQ961}, we get
\begin{align*}
	\begin{split}
		&
		\frac{1}{2}
		\int_\Omega
		\left(
		\rho_0
		|\bp^3 \pt^2 v|^2 (T)
		+
		\rhog^2 
		J^{-1} 
		|D_\eta \bp^3 \pt^2 \eta (T)|^2
		-
		\rhog^2
		J^{-1} 
		| \curl_\eta \bp^3 \pt^2 \eta (T)|^2
		+
		\rho_0^2
		J^{-3} 
		|\bp^3 \pt^2 J(T)|^2
		\right)
		\\&\indeq
		+
		\frac{ 1}{8}
		\int_\Gamma
		|\bp^3 \pt^2 \eta  |^2 (T)
		\les
		C_\delta
		+
		\delta \sup_{t\in [0,T]} E(t)
		+
		C_\delta T P(\sup_{t\in [0,T]} E(t))
		.
	\end{split}
\end{align*}
The proof of the lemma is thus completed by the curl estimate in Lemma~\ref{Lcurl}.
\end{proof}

The following lemma provides $\bp^2 \pt^3$, $\bp \pt^4$, and $\pt^5$ energy estimates of the solutions $\eta$, $J$, and $v$.
\cole
\begin{Lemma}
\label{Lenergy3}
For $\delta \in (0,1)$, we have 
\begin{align}
\begin{split}
	&
	\sup_{t\in [0,T]}
	\sum_{l=3}^5
	\int_\Omega 
	\left(
	|\rhog \bp^{5-l} \pt^l D\eta (t)|^2
	+
	|\bp^{5-l} \pt^{l} v(t)|^2	
	+
	|\bp^{5-l} \pt^{l} J (t)|^2
	\right)
		\\&\indeq
	+
	\sup_{t\in [0,T]}
	\sum_{l=3}^5
	\int_\Gamma
	|\bp^{5-l} \pt^{l} \eta (t)|^2
	\les
	C_\delta
	+
	\delta \sup_{t\in [0,T]} E(t)
	+
	C_\delta T P(\sup_{t\in [0,T]} E(t))
	,
	\label{EQ108}
\end{split}
\end{align}
where $C_\delta>0$ is a constant depending on $\delta$.
\end{Lemma}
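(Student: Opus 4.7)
The plan is to mirror the energy scheme of Lemma~\ref{Lenergy2} uniformly for $l \in \{3,4,5\}$. For each such $l$ I would apply the operator $\bp^{5-l}\pt^l$ to the momentum equation \eqref{EQ01} and take the $L^2(\Omega)$-inner product with $\bp^{5-l}\pt^l v^i$. This produces, exactly as in \eqref{EQ691}, the time derivative of $\frac12\int_\Omega \rho_0 |\bp^{5-l}\pt^l v|^2$ plus three integrals $\JJ_1$, $\JJ_2$, $\JJ_3$; I split $\JJ_1$ into a degenerate piece $\GG$ and a uniformly-positive piece $\LL$ using the decomposition $\rho_0^2 = \rhog^2+\rhol^2$ from \eqref{EQ184}--\eqref{EQ185}. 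Note that the scaling is now one-to-one in space and time, since $l \geq 3 \geq 2$, so that $\pt^l v$, $\pt^l J$, and the spatially-differentiated $\pt\eta$ lie at comparable Sobolev levels inside~$E(t)$.

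For $\GG$ I would reproduce \eqref{EQ912}--\eqref{EQ992}: integrate by parts in $\partial_k$ (the boundary terms vanish because $\rhog=0$ on $\Gamma$ and $a^3_i\bp^{5-l}\pt^l v^i=0$ on $\{x_3=0\}$), substitute \eqref{EQ15} for $\pt a^k_i$, and extract via the Lagrangian div-curl identities \eqref{EQ72}--\eqref{EQ75} the total time derivative of $\int_\Omega \rhog^2 J^{-1}(|D_\eta \bp^{5-l}\pt^l\eta|^2 - |\curl_\eta \bp^{5-l}\pt^l\eta|^2) - \int_\Omega \rhog^2 J^{-3}|\bp^{5-l}\pt^l J|^2$, the last piece coming by converting $\dive_\eta\bp^{5-l}\pt^l\eta$ into $J^{-1}\bp^{5-l}\pt^l J$ modulo commutators via the analogs of \eqref{EQ207} and \eqref{EQ289}. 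The curl contribution is then absorbed using Lemma~\ref{Lcurl}. For $\LL$ I would repeat \eqref{EQ940}--\eqref{EQ996}: integration by parts in $\partial_s$ leaves only the $\Gamma$-boundary integral, which by the Rayleigh-Taylor sign \eqref{EQ284} produces the positive contribution $\frac14\int_\Gamma|\bp^{5-l}\pt^l\eta^r a^3_r|^2$. The remaining interior piece $\LL_{12}$ is paired by integration by parts in time with $\int_0^T\LL_2$ to cancel the top-order term, and the resulting pointwise-in-time boundary term is absorbed via the Young inequality, \eqref{EQ289}, and the fundamental theorem of calculus. The key point here is that the one-to-one scaling makes the $\pt J$ and $Dv$ regularities coincide inside $E(t)$, so no recourse to the extra $H^{1.5}$ flow-map regularity from Section~\ref{sec06} is needed. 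The integrals $\JJ_2$ and $\JJ_3$ are treated as in \eqref{EQ963}--\eqref{EQ220}: integration by parts in $\partial_k$ kills the $\Gamma$-boundary since $\bp^{5-l}\pt^l J^{-2}|_\Gamma=0$, the principal term becomes $\frac{d}{dt}\int_\Omega \rho_0^2 J^{-3}|\bp^{5-l}\pt^l J|^2$ via \eqref{EQ20}, and the commutators are handled by further integration by parts in space and time.

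The main obstacle, as emphasized in the introduction, is the recovery of the tangential boundary components $|\bp^{5-l}\pt^l\eta^\alpha|_0$ for $\alpha=1,2$, which is most delicate for $l=5$ where no tangential derivative is available at all. I would rerun the argument \eqref{EQ232}--\eqref{EQ233}: apply $\bp^{5-l}\pt^{l-2}$ to the boundary relation \eqref{EQLJ}, expand by the Leibniz rule, and estimate each factor using H\"older and Sobolev inequalities, the trace Lemma~\ref{trace}, Lemmas~\ref{Lenergy1} and~\ref{Lenergy2}, and the near-identity bound $\Vert a-I_3\Vert_{H^2}\les T$. This reduces each tangential boundary norm to the normal one modulo $C_\delta+\delta\sup E+C_\delta TP(\sup E)$; combined with the analog of \eqref{EQ712}, the positive boundary term in the $\LL$-estimate becomes $\frac18\int_\Gamma|\bp^{5-l}\pt^l\eta|^2$. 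Summing the three resulting identities over $l=3,4,5$ and invoking Lemma~\ref{Lcurl} to cancel the indefinite $\int_\Omega \rhog^2 J^{-1}|\curl_\eta\bp^{5-l}\pt^l\eta|^2$ contributions yields the desired bound \eqref{EQ108}.
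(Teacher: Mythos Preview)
Your proposal is correct and follows essentially the same approach as the paper. The paper disposes of $l=3,4$ by invoking the proof of Lemma~\ref{Lenergy2} verbatim and then writes out only the $l=5$ case in detail (where, e.g., the $\LL_{123}$ term is bounded directly via $\Vert\pt^5 D\eta\Vert_0\le\Vert\pt^4 v\Vert_1$ rather than through Lemma~\ref{duality}, and the analog of \eqref{EQ289} is the simpler \eqref{EQ221}); your uniform description over $l\in\{3,4,5\}$ captures exactly this, including the crucial recovery of $|\bp^{5-l}\pt^l\eta^\alpha|_0$ from \eqref{EQLJ}.
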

\colb

\begin{proof}[Proof of Lemma~\ref{Lenergy3}]
Similar arguments as in Lemma~\ref{Lenergy2} lead to
\begin{align}
	\begin{split}
		&
		\sup_{t\in [0,T]}
		\sum_{l=3}^4
		\int_\Omega 
		\left(
		|\rhog \bp^{5-l} \pt^l D\eta (t)|^2
		+
		|\bp^{5-l} \pt^{l} v(t)|^2	
		+
		|\bp^{5-l} \pt^{l} J (t)|^2
		\right)
				\\&\indeq
		+
		\sup_{t\in [0,T]}
		\sum_{l=3}^4
		\int_\Gamma
		|\bp^{5-l} \pt^{l} \eta (t)|^2
		\les
		C_\delta
		+
		\delta \sup_{t\in [0,T]} E(t)
		+
		C_\delta T P(\sup_{t\in [0,T]} E(t))
		,
		\label{EQ908}
	\end{split}
\end{align}
It remains to establish the $\pt^5$ energy estimates.
Applying $\pt^5$ to \eqref{EQ01} and taking the inner product with $\pt^5 v$, we obtain
\begin{align}
	\begin{split}
		&
		\frac{1}{2} \frac{d}{dt}
		\int_\Omega \rho_0 
		|\pt^5 v|^2 
		+
		\underbrace{\int_\Omega 
			\pt^5 a^k_i  
			(\rho_0^2 J^{-2}),_k 
			\pt^5 v^i}_{\JJ_1}
		+
		\underbrace{\int_\Omega a^k_i 
			(\rho_0^2 \pt^5 J^{-2}),_k 
			\pt^5 v^i}_{\JJ_2}
		\\&\indeq
		=
		\underbrace{\sum_{l=1}^{4}
			c_l
			\int_\Omega
			\partial_t^l a_i^k 
			(\rho_0^2 \pt^{5-l}  J^{-2}),_k 
			\pt^5 v^i}_{\JJ_3}
		.
		\label{EQ414}
	\end{split}
\end{align}

\textit{Estimate of $\JJ_1$ in \eqref{EQ414}}: 
Using the decomposition $\rho_0^2= \rhog^2+ \rhol^2$ (see \eqref{EQ184}--\eqref{EQ185}), we split the term $\mathcal{J}_1$ as
\begin{align}
	\begin{split}
	\mathcal{J}_1
	&
	=
	\underbrace{
	\int_\Omega 
	\pt^5 a^k_i 
	( \rhog^2 J^{-2}),_k 
	\bp^3 \pt^2  v^i
	}_{\mathcal{G}}
	+
	\underbrace{
	\int_\Omega 
	\pt^5  a^k_i 
	( \rhol^2 J^{-2} ),_k 
	\bp^3 \pt^2  v^i	
	}_{\mathcal{L}}
	.
	\label{EQ911}
	\end{split}
\end{align}
For the term $\GG$, we proceed in a similar fashion as in Lemma~\ref{Lenergy2}, obtaining
\begin{align}
	\begin{split}
		&
		\int_0^T
		\GG
		=
		\frac{1}{2}
		\int_\Omega
		\left(
		\rhog^2 
		J^{-1} 
		|D_\eta \pt^5 \eta (T)|^2
		-
		\rhog^2
		J^{-1} 
		| \curl_\eta \pt^5 \eta (T)|^2
		-
		\rhog^2
		J^{-3} 
		|\pt^5 J(T)|^2
		\right)
		+
		\GG'
		,
		\label{EQ792}	
	\end{split}
\end{align}
where $\GG'$ consists of the terms satisfying
\begin{align}
	\int_0^T
	\GG'
	\les
	C_\delta
	+
	\delta \sup_{t\in [0,T]} E(t)
	+
	C_\delta T P(\sup_{t\in [0,T]} E(t))
	.
	\label{EQ795}	
\end{align}
Next we estimate the term $\LL$ in \eqref{EQ911}.
Using \eqref{EQ15} and the Leibniz rule, we get
\begin{align*}
	\begin{split}
		\LL
		&
		=
		-
		\underbrace{\int_\Omega 
		\pt^5 \eta^r,_s J^{-1}  a^s_i a^k_r
		( \rhol^2  J^{-2}),_k 
		\pt^5 v^i}_{\LL_{1}}
		+
		\underbrace{\int_\Omega 
			\pt^5 \eta^r,_s J^{-1} a^s_r a^k_i 
			(\rhol^2  J^{-2}),_k 
			\pt^5 v^i}_{\LL_{2}}
		\\&\indeq
		+
		\underbrace{	
			\sum_{l=1}^4
			c_l
			\int_\Omega
			\partial_t^l
			(J^{-1} (a^s_r a^k_i - a^s_i a^k_r) )
			\pt^{5-l}
			\eta^r,_s
			( \rhol^2  J^{-2}),_k 
			\pt^5 v^i}_{\LL_{3}}
		.
	\end{split}
\end{align*}
For the term $\LL_{1}$, we integrate by parts in $\partial_s$, leading to
\begin{align*}
	\begin{split}
	-
	\int_0^T 
	\LL_{1}
	&
	=
	\underbrace{	
	\int_0^T \int_{\{x_3 =0\}}
	\partial_t^{5} \eta^r J^{-1} a^3_i a^k_r 
	( \rhol^2  J^{-2}),_k \pt^5 v^i}_{=0}
	-
	\underbrace{
	\int_0^T \int_\Gamma
	\pt^5 \eta^r J^{-1} a^3_i a^3_r 
	( \rhol^2  J^{-2}),_3 \pt^5 v^i
}_{\LL_{11}}
	\\&\indeq
	+
	\underbrace{\int_0^T	
	\int_\Omega \pt^5 
	\eta^r J^{-1}  a^s_i a^k_r
	( \rhol^2 J^{-2}),_k 
	\pt^5 v^i,_s}_{\LL_{12}}
	+
	\underbrace{\int_0^T	
	\int_\Omega 
	\pt^5 \eta^r 
	(J^{-1}  a^s_i a^k_r
	( \rhol^2 J^{-2}),_k ),_s
	\pt^5 v^i}_{\LL_{13}}
	,
	\end{split}
\end{align*}
since $a^3_i \pt^5 v^i = 0$ on $\{x_3=0\} \times [0,T]$ and $\rhol^2 J^{-2}=1$ on $\Gamma \times [0,T]$.
The term $\LL_{11}$ can be rewritten as
\begin{align*}
	\begin{split}
		-\LL_{11}
		&
		=
		\underbrace{-
			\frac{1}{2}
			\int_0^T
			\frac{d}{dt}
			\int_\Gamma
			J^{-1} 	
			(\rhol^2  J^{-2} ),_3
			|\pt^5 \eta^r a^3_r |^2
		}_{\LL_{111}}
		+
		\underbrace{
			\frac{1}{2}
			\int_0^T
			\int_\Gamma
			(J^{-1} 	
			(\rhol^2  J^{-2} ),_3)_t
			|\pt^5 \eta^r a^3_r |^2
		}_{\LL_{112}}
		\\
		&\indeq
		+
		\underbrace{
			\int_0^T
			\int_\Gamma
			J^{-1} 	
			(\rhol^2  J^{-2} ),_3
			\pt^5 \eta^i
			a^3_i
			\pt^5 \eta^r 
			\pt a^3_r
		}_{\LL_{113}}
		.
	\end{split}
\end{align*}
From \eqref{EQ284} it follows that
\begin{align}
	\begin{split}
		\LL_{111}
		&
		=	
		-\frac{1}{2}
		\int_\Gamma
		J^{-1} 
		(\rhol^2 J^{-2} ),_3
		|\bp^5 \eta^r  a^3_r |^2
		(T)
		+
		M_0
		\geq
		\frac{1}{4}
		\int_\Gamma
		|\bp^5 \eta^r a^3_r |^2 (T)
		+
		M_0
		.
		\label{EQ223}
	\end{split}
\end{align}
Using the H\"older and Sobolev inequalities, we get
\begin{align*}
	\begin{split}
	\LL_{112}
	+
	\LL_{113}
	\les
	\int_0^T
	(1+ \Vert DJ_t\Vert_{L^\infty})
	|\pt^5 \eta|_0^2
	\les
	T P(\sup_{t \in [0,T]} E(t))
	,
	\end{split}
\end{align*}
since $\Vert \pt J\Vert_4^2 + \vert \pt^5 \eta \vert_0^2\leq E(t)$.
For the term $\LL_{12}$, we integrate by parts in time, obtaining
\begin{align*}
	\begin{split}	
		\LL_{12}
		&
		=
		\underbrace{
			\int_\Omega \pt^5 \eta^r J^{-1} 
			a^s_i a^k_r
			( \rhol^2  J^{-2}),_k 
			\pt^5 \eta^i,_s
			\big|_0^T}_{\LL_{121}}
		-
		\underbrace{\int_0^T	
			\int_\Omega 
			\pt^6 \eta^r J^{-1}  a^s_i a^k_r
			( \rhol^2  J^{-2}),_k 
			\pt^5 \eta^i,_s}_{\LL_{122}}
		\\&\indeq
		-
		\underbrace{\int_0^T	
			\int_\Omega 
			(	 J^{-1}  a^s_i a^k_r
			( \rhol^2  J^{-2}),_k )_t
			\pt^5 \eta^r
			\pt^5 \eta^i,_s}_{\LL_{123}}
		,
	\end{split}
\end{align*}
Note that from \eqref{EQ20} and the Leibniz rule it follows that
\begin{align}
	\pt^5 J 
	= 
	a^s_i \pt^5 \eta^i,_s
	 + 
	 \sum_{l=1}^4 c_l \pt^l a^s_i \pt^{5-l} \eta^i,_s
	 .
	 \label{EQ221}
\end{align}
Inserting \eqref{EQ221} to the term $\LL_{121}$ and using the Young, H\"older, and Sobolev inequalities and the fundamental theorem of calculus, we obtain
\begin{align*}
	\begin{split}
		\LL_{121}
		&
		\les
		\delta\Vert \pt^5 J(T)\Vert_0^2
		+
		C_\delta 
		\Vert \pt^5 \eta(T) \Vert_0^2
		+
		\sum_{l=1}^4
		\Vert \pt^l a \pt^{5-l} D \eta \Vert_0^2 (T)
		+
		1
		\\&
		\les
		C_\delta
		+
		\delta \sup_{t\in [0,T]} E(t)
		+
		C_\delta T P(\sup_{t\in [0,T]} E(t))
		,
	\end{split}
\end{align*}
where we recall that $\Vert \pt^5 J\Vert_0^2 + \Vert \pt^3 v\Vert_2^2 + \Vert \pt^4 v \Vert_1^2 + \Vert \pt^5 v\Vert_0^2 \leq E(t)$.
It is clear that $-\mathcal{L}_{122}+\int_0^T \mathcal{L}_{2} = 0$.
For the term $\mathcal{L}_{123}$, we use the H\"older and Sobolev inequalities, obtaining
\begin{align*}
	\begin{split}
	-\LL_{123}	
	\les
	\int_0^T
	(1+ \Vert DJ_t\Vert_{L^\infty})
	\Vert \pt^5 \eta\Vert_0
	\Vert \pt^5 D\eta\Vert_0
	\les
	T P(\sup_{t\in [0,T]} E(t))
	,
	\end{split}
\end{align*}
since $\Vert \pt^4 v\Vert_1^2 \leq E(t)$.
Similarly, the terms $\mathcal{L}_{13}$ and $\mathcal{L}_3$ are estimated as
\begin{align*}
	\LL_{13}
	+
	\int_0^T 
	\mathcal{L}_{3}
	\les
	 T P(\sup_{t\in [0,T]} E(t))	
	.
\end{align*}
Combining \eqref{EQ414}, \eqref{EQ792}--\eqref{EQ223}, and the above estimates, we conclude
\begin{align}
	\begin{split}
		&
		\frac{1}{2}
		\int_\Omega
		\left(
		\rho_0 |\pt^5 v(T)|^2
		+
		\rhog^2 
		J^{-1} 
		|D_\eta \pt^5 \eta (T)|^2
		-
		\rhog^2
		J^{-1} 
		| \curl_\eta \pt^5 \eta (T)|^2
		-
		\rhog^2
		J^{-3} 
		|\pt^5 J(T)|^2
		\right)
		\\
		&\indeq
		+
		\frac{ 1}{4}
		\int_\Gamma
		|\pt^5 \eta^r a^3_r  |^2 (T)
		+
		\int_0^T
		\JJ_2
		\les
		C_\delta
		+
		\delta \sup_{t\in [0,T]} E(t)
		+
		C_\delta T P(\sup_{t\in [0,T]} E(t))
		+
		\int_0^T
		\JJ_3
		.
		\label{EQ415}
	\end{split}
\end{align}

Now we claim that 
\begin{align}
	\begin{split}
	\sup_{t\in [0,T]}
	\vert \pt^5 \eta^\alpha (t)
	\vert_{0}^2
	\les
	1+T P(\sup_{t\in [0,T]} E(t))
	,
	\label{EQ382}
	\end{split}
\end{align}
where $\alpha=1,2$.
Applying $\pt^3$ to \eqref{EQLJ}, we get 
\begin{align}
	\begin{split}
	\vert  \pt^5  \eta^\alpha \vert_0^2
	&
	\les
	\underbrace{	
	\vert (a^3_3)^{-1} a^3_\alpha 
	\pt^5 \eta^3 \vert_0^2}_{\II_1}
	+
	\underbrace{ 
	\vert \pt^3 
	((a^3_3)^{-1} a^3_\alpha )
	\pa_t^2 \eta^3 \vert_0^2
	}_{\II_2} 
	+
	\underbrace{
	\vert  \pt ((a^3_3)^{-1} a^3_\alpha )
	\pt^4 \eta^3 \vert_0^2
	}_{\II_{3}}
	\\&\indeq
	+
	\underbrace{
	\vert  \pt^2  ((a^3_3)^{-1} a^3_\alpha )
	\pt^3 \eta^3 \vert_0^2
	}_{\II_{4}}
\label{EQ226}
	\end{split}
\end{align}
The term $\II_1$ is estimated using the H\"older and Sobolev inequalities as
\begin{equation} 
	\mathcal{I}_1 
	\les 
	\Vert a^3_\alpha \Vert_{L^\infty (\Omega)}
	\vert \pt^{5} \eta^3  
	\vert_0^2 
	\les 
	T P(\sup_{t\in [0,T]} E(t)),
	\label{EQ384}
\end{equation}
where we used $\Vert a- I_3 \Vert_{2} \les T$ and $\vert \pt^5 \eta \vert_0^2\leq E(t)$.
For the term $\mathcal{I}_2$, using the Leibniz rule, we get
\begin{align}
	\begin{split}
	\mathcal{I}_2
	&
	\les
	\underbrace{
	\vert \pt^3 (a^3_3)^{-1}
	a^3_\alpha
	\vert_0^2}_{\II_{21}}
	+	
\underbrace{
	\vert  (a^3_3)^{-1}
	\pt^{3} a^3_\alpha
	\vert_0^2}_{\II_{22}}
	+	
	\underbrace{
	\sum_{l=1}^2
	\vert \pt^l (a^3_3)^{-1}
	\pt^{3-l} a^3_\alpha
	\vert_0^2}_{\II_{23}}
	.
	\label{EQ389}
	\end{split}
\end{align}
We bound the term $\II_{21}$ using the H\"older and Sobolev inequalities and the fundamental theorem of calculus as
\begin{align}
	\II_{21}
	\les
	\vert \pt^3 
	(a^3_3)^{-1}
	\vert_0^2
	\Vert
	a^3_\alpha
	\Vert_{L^\infty }^2
	\les
	TP(\sup_{t \in [0,T]} E(t))
	.
	\label{EQ228}
\end{align}
Similarly, we estimate the term $\II_{22}$ as
\begin{align}
	\begin{split}
	\II_{22}
	\les
	\vert \pt^3 (\bp \eta \bp \eta) 
	\vert_0^2	
	\les
	1+TP(\sup_{t\in [0,T]} E(t))
	,
	\label{EQ224}
	\end{split}
\end{align}
since $\vert \pt^4 \bp \eta\vert_0^2 
\leq E(t)$.
The term $\II_{23}$ consists of lower-order terms which can be estimated in a similar fashion using the H\"older and Sobolev inequalities and the fundamental theorem of calculus as
\begin{align}
	\II_{23}
	\les
	1+ TP(\sup_{t \in [0,T]} E(t))
	.
	\label{EQ229}
\end{align}
For the term $\II_3$, we have
\begin{align}
	\begin{split}
	\II_3
	\les
	\vert \pt^4 \eta\vert_0^2
	\les
	1+TP(\sup_{t \in [0,T]} E(t))
	,	
	\label{EQ230}
	\end{split}
\end{align}
since $\vert \pt^5 \eta \vert_0^2 \leq E(t)$.
For the term $\II_4$, we proceed analogously as in \eqref{EQ224}, obtaining
\begin{align}
	\II_4
	\les
	1+TP(\sup_{t \in [0,T]} E(t))
	.
	\label{EQ231}
\end{align}
Collecting \eqref{EQ226}--\eqref{EQ231}, we complete the proof of the claim \eqref{EQ382}.
From \eqref{EQ415}--\eqref{EQ382} and using $|\pt^5 \eta^r a^3_r| \geq |\pt^5 \eta^3 a^3_3| - |\pt^5 \eta^\alpha a^3_\alpha|$ and $\Vert a - I_3 \Vert_2 \les T$, we conclude
\begin{align}
	\begin{split}
		&
		\frac{1}{2}
		\int_\Omega
		\left(
		\rho_0 |\pt^5 v(T)|^2
		+
		\rhog^2 
		J^{-1} 
		|D_\eta \pt^5 \eta (T)|^2
		-
		\rhog^2
		J^{-1} 
		| \curl_\eta \pt^5 \eta (T)|^2
		-
		\rhog^2
		J^{-3} 
		|\pt^5 J(T)|^2
		\right)
		\\
		&\indeq
		+
		\frac{ 1}{8}
		\int_\Gamma
		|\pt^5  \eta (T)|^2
		+
		\int_0^T
		\mathcal{J}_2
		\les
		C_\delta
		+
		\delta \sup_{t\in [0,T]} E(t)
		+
		C_\delta T P(\sup_{t\in [0,T]} E(t))
		+
		\int_0^T
		\mathcal{J}_3
		.
		\label{EQ693}
	\end{split}
\end{align}

\textit{Estimate of $\JJ_2$ in \eqref{EQ414}}:
We integrate by parts in $\partial_k$ and use the Piola identity \eqref{piola} to get
\begin{align*}
	\begin{split}
		\JJ_2
		&
		=
		\int_{\Gamma}
		a^3_i \rho_0^2 \pt^5 J^{-2}
		\pt^5 v^i
		-
		\int_{\{x_3=0\}}
		a^3_i \rho_0^2 \pt^5 J^{-2}
		\pt^5 v^i 
		-
		\int_\Omega
		a^k_i \rho_0^2 \pt^5 J^{-2}
		\pt^5 v^i,_k
		\\
		&
		=
		-
		\int_\Omega
		a^k_i \rho_0^2 \pt^5 J^{-2}
		\pt^5 v^i,_k
		,		
	\end{split}
\end{align*}
since $\pt^5 J^{-2}=0$ on $\Gamma \times [0,T]$ and $a^3_i \pt^5 v^i = 0$ on $\{x_3=0\} \times [0,T]$.
Using 
\begin{align*}
	\pt^6 J 
	= 
	a^k_i \pt^5 v^i,_k 
	+ 
	v^i,_k	\pt^5 a^k_i  
	+ 
	\sum_{l=1}^4 c_l \pt^l a^k_i \pt^{5-l} v^i,_k
	,
\end{align*}
we obtain
\begin{align*}
	\begin{split}
	\JJ_2
	&=
	2\int_\Omega
	a^k_i \rho_0^2
	J^{-3}
	\pt^5 J
	\pt^5 v^i,_k
	+
	\underbrace{\sum_{l=1}^{4}
	c_l
	\int_\Omega
	\rho_0^2 a^k_i 
	\pt^l J^{-3} \pt^{5-l} J
	\pt^5 v^i,_k}_{\JJ_{24}}
	\\&
	=
	\underbrace{2\int_\Omega
	\rho_0^2
	J^{-3}
	\pt^5 J
	\pt^6 J}_{\JJ_{21}}
	-
\underbrace{2
	\int_\Omega
	\rho_0^2
	J^{-3}
	\pt^5 J
	 v^i,_k
	 \pt^5 a^k_i
	}_{\JJ_{22}}
	+
		\underbrace{
	\sum_{l=1}^4
			c_l
	\int_\Omega
	\rho_0^2
	J^{-3}
	\pt^5 J
	\pt^l a^k_i
	\pt^{5-l} v^i,_k}_{\JJ_{23}}
	+
	\JJ_{24}
	.
	\end{split}
\end{align*}
The term $\JJ_{21}$ can be rewritten as
\begin{align}
	\begin{split}
		\int_0^T
	\JJ_{21}
	=
		\int_0^T
	\frac{d}{dt}
	\int_\Omega
	\rho_0^2 J^{-3}
	|\pt^5 J|^2	
	-
\underbrace{	\int_0^T
	\int_\Omega
	(\rho_0^2 J^{-3})_t
	|\pt^5 J|^2}_{\JJ'_{21}}
	,
	\label{EQ564}
	\end{split}
\end{align}
where the term $\JJ'_{21}$ satisfies
\begin{align*}
	\JJ'_{21}
	\les
	T P(\sup_{t\in [0,T]} E(t))
	.
\end{align*}
The highest order term in $\JJ_{22}$ is of the form
$\int_\Omega \rho_0^2 J^{-3} \pt^5 J Dv \pt^5 D\eta$, which can be treated using the H\"older and Sobolev inequalities as
\begin{align*}
	\int_0^T
	\int_\Omega \rho_0^2 J^{-3} \pt^5 J Dv \pt^5 D\eta
	\les
	\int_0^T
	\Vert \pt^5 J\Vert_0
	\Vert \pt^5 D\eta\Vert_0
	\les
	TP(\sup_{t \in [0,T]} E(t))
	 ,
\end{align*}
since $\Vert \pt^4 v\Vert_1^2 + \Vert \pt^5 J\Vert_0^2 \leq E(t)$.
The rest of the terms in $\JJ_{22}$ are of lower order which can be treated in a similar fashion using the H\"older and Sobolev inequalities and the fundamental theorem of calculus. Thus, we obtain
\begin{align*}
	\int_0^T
	\JJ_{22}
	\les
	TP(\sup_{t \in [0,T]} E(t))
	.
\end{align*}
Similarly, the term $\JJ_{23}$ is estimated as
\begin{align*}
	\int_0^T
	\JJ_{23}
	\les
	1+TP(\sup_{t \in [0,T]} E(t))
	.
\end{align*}
For the term $\JJ_{24}$, we integrate by parts in $\partial_k$, obtaining
\begin{align}
	\begin{split}
	\JJ_{24}
	=
	\sum_{l=1}^{4}
	c_l
	\int_\Omega
	(\rho_0^2 a^k_i 
	\pt^l J^{-3} \pt^{5-l} J),_k
	\pt^5 v^i
	.
	\label{EQ240}
	\end{split}
\end{align}
From the H\"older and Sobolev inequalities and the fundamental theorem of calculus it follows that
\begin{align}
	\begin{split}
	\int_0^T
	\JJ_{24}
	\les
	T P(\sup_{t\in [0,T]} E(t))
	,	
	\label{EQ241}
	\end{split}
\end{align}
since $\Vert \pt^4 J\Vert_1^2 + \Vert \pt^3 J\Vert_2^2+ \Vert \pt^5 v\Vert_0^2 \leq E(t)$.
Combining \eqref{EQ693}--\eqref{EQ564} and the above estimates, we conclude
\begin{align}
	\begin{split}
		&
		\frac{1}{2}
		\int_\Omega
		\left(
		\rho_0 |\pt^5 v(T)|^2
		+
		\rhog^2 
		J^{-1} 
		|D_\eta \pt^5 \eta (T)|^2
		-
		\rhog^2
		J^{-1} 
		| \curl_\eta \pt^5 \eta (T)|^2
		+
		\rho_0^2
		J^{-3} 
		|\pt^5 J(T)|^2
		\right)
		\\
		&\indeq
		+
		\frac{1}{8}
		\int_\Gamma
		|\pt^5 \eta (T) |^2
		\les
		C_\delta
		+
		\delta \sup_{t\in [0,T]} E(t)
		+
		C_\delta T P(\sup_{t\in [0,T]} E(t))
		+
		\int_0^T
		\JJ_3
		,
		\label{EQ418}
	\end{split}
\end{align}
since $\rho_0^2 \geq \rhog^2$.

\textit{Estimate of $\JJ_3$ in \eqref{EQ414}}: 
For the term $\JJ_3$, we proceed analogously as in \eqref{EQ240}--\eqref{EQ241}, obtaining
\begin{align}
	\int_0^T 
	\JJ_3
	\les
	T P(\sup_{t\in [0,T]} E(t))
	.
	\label{EQ122}
\end{align}

\textit{Concluding the proof}: 
Combining \eqref{EQ418}--\eqref{EQ122}, we arrive at
\begin{align}
	\begin{split}
		&
		\frac{1}{2}
		\int_\Omega
		\left(
		\rho_0 |\pt^5 v(T)|^2
		+
		\rhog^2 
		J^{-1} 
		|D_\eta \pt^5 \eta (T)|^2
		-
		\rhog^2
		J^{-1} 
		| \curl_\eta \pt^5 \eta (T)|^2
		+
		\rho_0^2
		J^{-3} 
		|\pt^5 J(T)|^2
		\right)
		\\
		&\indeq
		+
		\frac{ 1}{8}
		\int_\Gamma
		|\pt^5 \eta (T) |^2
		\les
		C_\delta
		+
		\delta \sup_{t\in [0,T]} E(t)
		+
		C_\delta T P(\sup_{t\in [0,T]} E(t))
		.
		\label{EQ419}
	\end{split}
\end{align}
From the curl estimates in Lemma~\ref{Lcurl} we complete the $\pt^5$ energy estimates.
Combining \eqref{EQ908} and \eqref{EQ419}, we conclude the proof of the lemma.
\end{proof}

\startnewsection{Normal derivative estimates}{sec05}
In the following three lemmas, we derive the normal derivative estimates of $J$ and $v$.

\cole
\begin{Lemma}
\label{LnormalJ1}
For $\delta \in (0,1)$, we have 
\begin{align}
\begin{split}
	&
	\sup_{t\in [0,T]}
	\left(
	\Vert \pt^4 v (t)\Vert_{1}^2
	+
	\Vert \pt^4 J(t) \Vert_{1}^2
	\right)
	\les
	C_\delta
	+
	\delta \sup_{t\in [0,T]} E(t)
	+
	C_\delta T P(\sup_{t\in [0,T]} E(t))
	,
	\label{EQ42}
\end{split}
\end{align}
where $C_\delta>0$ is a constant depending on $\delta$. 
\end{Lemma}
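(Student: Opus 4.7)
The strategy is to decompose the $H^1$ norms into $L^2$, tangential, and normal components. For each of $F \in \{v, J\}$ we write
\[
\Vert \pt^4 F\Vert_1^2
\les
\Vert \pt^4 F\Vert_0^2
+ \Vert \bp \pt^4 F\Vert_0^2
+ \Vert \partial_3 \pt^4 F\Vert_0^2 .
\]
The $L^2$ pieces follow from the fundamental theorem of calculus applied in time, using the initial data together with $\Vert \pt^5 v\Vert_0^2$ and $\Vert \pt^5 J\Vert_0^2$, which Lemma~\ref{Lenergy3} controls by the right-hand side of \eqref{EQ42}. The tangential pieces $\Vert \bp \pt^4 v\Vert_0^2$ and $\Vert \bp \pt^4 J\Vert_0^2$ are exactly the $l=4$ contributions of the sums in \eqref{EQ108}, so they are immediate from Lemma~\ref{Lenergy3}. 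It remains to recover the two normal components $\Vert \partial_3 \pt^4 v\Vert_0$ and $\Vert \partial_3 \pt^4 J\Vert_0$.

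For the normal derivative of $J$, I plan to apply $\pt^4$ to \eqref{EQ200} (with $\gamma = 2$) and take the $i=3$ component. The highest order contribution is $2\rho_0 J^{-3} a^k_3 \pt^4 J,_k$, so isolating the $k=3$ term yields
\[
2\rho_0 J^{-3} a^3_3 \, \partial_3 \pt^4 J
= \pt^5 v^3
- 2\rho_0 J^{-3} a^\alpha_3 \, \partial_\alpha \pt^4 J
+ \Phi,
\]
where $\Phi$ collects commutator terms in which at most three time derivatives land on $J$ or at most one on $a$ while the rest distribute on $\rho_0, J^{-3}, a$. The liquid condition $\rho_0 \geq 1$, together with assumption~(i) giving $J \leq 3/2$ and assumption~(vi) giving $|a^3_3 - 1| \les T$, keeps the leading coefficient bounded below. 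Since $a^\alpha_3(0) = 0$, assumption~(vi) also forces $|a^\alpha_3| \les T$, so the second term on the right contributes only $T \Vert \bp \pt^4 J\Vert_0$. The estimate $\Vert \partial_3 \pt^4 J\Vert_0 \les \Vert \pt^5 v\Vert_0 + T \Vert \bp \pt^4 J\Vert_0 + \Vert \Phi\Vert_0$ then follows, and each factor on the right is controlled by Lemma~\ref{Lenergy3} combined with standard H\"older/Sobolev estimates and the fundamental theorem of calculus applied to the lower order factors in $\Phi$.

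For the normal derivative of $v$, I use the pointwise identities
\[
\pt^4 v^1,_3 = \pt^4 v^3,_1 + [\curl \pt^4 v]^2, \quad
\pt^4 v^2,_3 = \pt^4 v^3,_2 - [\curl \pt^4 v]^1, \quad
\pt^4 v^3,_3 = \dive \pt^4 v - \pt^4 v^1,_1 - \pt^4 v^2,_2,
\]
which give $\Vert \partial_3 \pt^4 v\Vert_0 \les \Vert \bp \pt^4 v\Vert_0 + \Vert \curl \pt^4 v\Vert_0 + \Vert \dive \pt^4 v\Vert_0$. The tangential piece comes from Lemma~\ref{Lenergy3} and the curl piece from the $l=4$ case of Lemma~\ref{Lcurl}. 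For the divergence, applying $\pt^4$ to the identity $J_t = a^s_r v^r,_s$ yields
\[
\dive \pt^4 v
= \pt^5 J - \pt^4 \big((a^s_r - \delta^s_r) v^r,_s\big),
\]
whose leading piece is $(a - I_3) \pt^4 Dv$, and which is therefore bounded in $L^2$ by $\Vert \pt^5 J\Vert_0 + CT \Vert \pt^4 Dv\Vert_0 + \text{l.o.t.}$ Substituting this back into the pointwise decomposition gives a bound $\Vert \pt^4 Dv\Vert_0 \les \Vert \pt^5 J\Vert_0 + \Vert \curl \pt^4 v\Vert_0 + \Vert \bp \pt^4 v\Vert_0 + CT \Vert \pt^4 Dv\Vert_0 + \text{l.o.t.}$, and absorbing the last term for $T$ sufficiently small finishes the estimate.

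The main obstacle is the coupling in the last step: without the smallness of $\Vert a - I_3\Vert_{L^\infty}$, the Lagrangian divergence cannot be separated from the full Eulerian gradient. This is precisely where assumption~(vi), itself justified by the short-time bootstrap, is indispensable. The liquid hypothesis $\rho_0 \geq 1$ plays an analogous role in the $J$ estimate, guaranteeing that the momentum equation can be inverted to solve for $\partial_3 \pt^4 J$ without any loss of regularity at the free boundary.
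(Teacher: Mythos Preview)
Your argument is correct. For the $\pt^4 J$ estimate you do exactly what the paper does: apply $\pt^4$ to the momentum equation \eqref{EQ200} with $i=3$, isolate the coefficient $\rho_0 J^{-3} a^3_3$ in front of $\partial_3\pt^4 J$, and bound the remaining terms by Lemma~\ref{Lenergy3} and standard product estimates.

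For the $\pt^4 v$ estimate your route differs slightly from the paper's. The paper invokes the Hodge-type elliptic estimate of Lemma~\ref{elliptic} applied to $\pt^5\eta$, which produces a boundary contribution $\Vert \pt^5\bp\eta\cdot N\Vert_{H^{-0.5}(\partial\Omega)}$ that is then handled via the normal trace Lemma~\ref{normaltrace}. You instead recover $\partial_3\pt^4 v$ directly from the pointwise algebraic identities expressing $F^i,_3$ in terms of $\bp F$, $\curl F$, and $\dive F$, and then control $\dive\pt^4 v$ through the Jacobian relation $\pt J = a^s_r v^r,_s$ with an absorption argument using $\Vert a-I_3\Vert_{L^\infty}\les T$. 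This is a legitimate and somewhat more elementary variant: at the integer level $H^1$ no trace machinery is needed, whereas the paper's Hodge lemma is stated for general $s\ge 1$ and is reused verbatim in the higher-order Lemmas~\ref{LnormalJ2}--\ref{LnormalJ3}. Your approach buys simplicity here but would not extend as cleanly to the fractional $H^{1.5}$ estimates in Section~\ref{sec06}, where the paper's packaged elliptic lemma becomes essential.
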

\colb

\begin{proof}[Proof of Lemma~\ref{LnormalJ1}]
We start with the term $\Vert \pt^4 v\Vert_1^2$.
Using Lemma~\ref{elliptic}, we have
\begin{align*}
	\begin{split}
	\Vert \pt^4 v\Vert_1^2
	\les
	\underbrace{\Vert \pt^5 \dive \eta\Vert_0^2}_{\KK_1}
	+
	\underbrace{\Vert \pt^5 \curl \eta\Vert_0^2}_{\KK_2}
	+
\underbrace{	\Vert \pt^5 \bp \eta \cdot N
	\Vert_{H^{-0.5}(\partial \Omega)}^2}_{\KK_3}
	+
\underbrace{	\Vert \pt^5 \eta\Vert_0^2}_{\KK_4}
	.
	\end{split}
\end{align*}
From the Leibniz rule it follows that
$
\dive \pt^5 \eta
=
\pt^5 J
+
(	\delta^s_r 
-
a^s_r)
\pt^5 \eta^r,_s
+
\sum_{l=1}^{4}
c_l
\pt^l a^s_r \pt^{5-l} \eta^r,_s
$.
Using Lemma~\ref{Lenergy3}, the H\"older and Sobolev inequalities, and the fundamental theorem of calculus, we obtain
\begin{align}
	\begin{split}
	\KK_1
	&
	\les
	\Vert \pt^5 J\Vert_0^2
	+
	\Vert (a-I_3) \pt^5 D\eta\Vert_0^2
	+
	\sum_{l=1}^4
	\Vert \pt^l a \pt^{5-l} D\eta\Vert_0^2
	\\&
	\les
	C_\delta
	+
	\delta \sup_{t\in [0,T]} E(t)
	+
	C_\delta T P(\sup_{t\in [0,T]} E(t))
	,
	\label{EQ999}
	\end{split}
\end{align}
since $\Vert a-I_3\Vert_2 \les T$ and $\Vert \pt^4 v\Vert_1^2 \leq E(t)$.
The term $\KK_2$ is already estimated in Lemma~\ref{Lcurl}.
For the term $\KK_3$, we appeal to Lemma~\ref{normaltrace}, obtaining
\begin{align*}
	\begin{split}
	\KK_3
	\les
	\Vert \pt^5 \bp \eta\Vert_0^2
	+
	\Vert \dive \pt^5 \eta\Vert_0^2
	\les	
	C_\delta
	+
	\delta \sup_{t\in [0,T]} E(t)
	+
	C_\delta T P(\sup_{t\in [0,T]} E(t))
	,
	\end{split}
\end{align*}
where the last inequality follows from Lemma~\ref{Lenergy3} and \eqref{EQ999}.
We bound the term $\KK_4$ using the fundamental theorem of calculus as
\begin{align*}
	\begin{split}
	\KK_4
	\les
	1+ P(\sup_{t\in [0,T]} E(t))
	,	
	\end{split}
\end{align*}
since $\Vert \pt^5 v\Vert_0^2 \leq E(t)$.
Consequently, we conclude the estimate
\begin{align}
	\Vert \pt^4 v\Vert_1^2
	\les
	C_\delta
	+
	\delta \sup_{t\in [0,T]} E(t)
	+
	C_\delta T P(\sup_{t\in [0,T]} E(t))
	.
	\label{EQ255}
\end{align}

Next we establish the estimate of $\Vert \pt^4 J\Vert_1^2$.
We rewrite \eqref{EQ200} as
\begin{align}
	v^i_t 
	-2 \rho_0
	J^{-3}
	a^k_i J,_k
	-2
	a^3_i J^{-2}
	= 0
	,
	\label{EQ100}
\end{align}
since $\rho_0,_3=-1$ and $\rho_0,_\alpha = 0$.
Letting $i=3$ in the above equation and applying $\pt^4$ to the resulting identity, we obtain
\begin{align}
	\begin{split}
		&
		2 \rho_0 a^3_3
		\pt^4 J,_3
		=
				4
		a_3^3 \pt^4 J
		+
		J^3
		\big[ 
		\pt^6 \eta^3
		+
		\rho_0 \pt^4
		(a^\alpha_3 J^{-2},_\alpha)
		+
			\sum_{l=0}^3
		c_l                                         
				\pt^{l} 
		J^{-2} 
		\pt^{4-l} a^3_3
						\\&\indeq
		+
		\sum_{l=1}^4
		c_l  \rho_0
		\pt^l a^3_3
		\pt^{4-l} 
		 J^{-2},_3
		+
		\sum_{l=1}^4
		c_l
		\rho_0 	a^3_3 
		\pt^l J^{-3} 
		\pt^{4-l} J,_3
		+
		\sum_{l=1}^3
		c_l
		a^3_3 \pt^l J^{-3} 
		\pt^{4-l} J
		\big]
		.
		\label{EQ43}
	\end{split}
\end{align}
Now we estimate the terms on the right hand side of \eqref{EQ43} in $L^2$.
Using the H\"older and Sobolev inequalities and the fundamental theorem of calculus, we obtain
\begin{align}
	\begin{split}
	\Vert a^3_3 \pt^4 J\Vert_0^2
	\les
	\Vert a^3_3\Vert_{L^\infty}^2
	\Vert \pt^4 J\Vert_0^2
	\les
	1+TP(\sup_{t \in [0,T]} E(t)),
	\end{split}
\end{align}
since $\Vert \pt^5 J\Vert_0^2 \leq E(t)$.
The term $\Vert  J^3 \pt^6 \eta \Vert_0^2$ is treated using the H\"older and Sobolev inequalities and Lemma~\ref{Lenergy3} as
\begin{align*}
	\Vert J^3 \pt^6 \eta \Vert_0^2
	\les
	C_\delta
	+
	\delta \sup_{t\in [0,T]} E(t)
	+
	C_\delta T P(\sup_{t\in [0,T]} E(t)).
\end{align*}
From the Leibniz rule it follows that
\begin{align}
	\begin{split}
	\Vert J^3 \rho_0 \pt^4
	(a^\alpha_3 J^{-2},_\alpha) \Vert_0^2
	&
	\les
	\Vert  \pt^4 \bp J^{-2} \Vert_0^2
	+
	\sum_{l=1}^4
	\Vert \pt^l a \pt^{4-l} \bp J^{-2} \Vert_0^2
	\\&
	\les
		C_\delta
	+
	\delta \sup_{t\in [0,T]} E(t)
	+
	C_\delta T P(\sup_{t\in [0,T]} E(t))
	,
	\label{EQ112}
	\end{split}
\end{align}
where we appealed to the H\"older and Sobolev inequalities and Lemma~\ref{Lenergy3}.
The rest of the terms on the right hand side of \eqref{EQ43} are of lower order which can estimated in a similar fashion as in \eqref{EQ43}--\eqref{EQ112}.
Collecting the above estimates, we infer from \eqref{EQ43} that
\begin{align*}
	\Vert \pt^4 J,_3\Vert_0^2
	\les
			C_\delta
	+
	\delta \sup_{t\in [0,T]} E(t)
	+
	C_\delta T P(\sup_{t\in [0,T]} E(t))
	.
\end{align*}
Combining the already-established estimates of $\Vert \pt^4 \bp J\Vert_0^2$ in Lemma~\ref{Lenergy3}, we conclude the estimate of $\Vert \pt^4 J\Vert_1^2$.
The proof of the lemma is thus completed by combining \eqref{EQ255}.
\end{proof}

\cole
\begin{Lemma}
\label{LnormalJ2}
For $\delta \in (0,1)$, we have
\begin{align}
	\begin{split}
		&
		\sup_{t\in [0,T]}
		\sum_{l=2}^3
		\left(
		\Vert \pt^l v (t)\Vert_{5-l}^2
		+
		\Vert \pt^l J(t) \Vert_{5-l}^2
		\right)
		\les
		C_\delta
		+
		\delta \sup_{t\in [0,T]} E(t)
		+
		C_\delta T P(\sup_{t\in [0,T]} E(t))
		,
		\label{EQ553}
	\end{split}
\end{align}
where $C_\delta>0$ is a constant depending on $\delta$.
\end{Lemma}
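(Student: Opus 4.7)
The plan is to mirror the strategy of Lemma~\ref{LnormalJ1} for $l\in\{2,3\}$, organized in the order
\begin{align*}
(\text{i})\ \Vert \pt^3 v\Vert_2^2,\quad
(\text{ii})\ \Vert \pt^3 J\Vert_2^2,\quad
(\text{iii})\ \Vert \pt^2 v\Vert_3^2,\quad
(\text{iv})\ \Vert \pt^2 J\Vert_3^2.
\end{align*}
This ordering closes with no circular dependence, because each Jacobian estimate $\Vert \pt^l J\Vert_{5-l}$ only needs a velocity estimate at one higher time-order (from Lemma~\ref{LnormalJ1} when $l=3$ and from step (i) when $l=2$), while each velocity estimate $\Vert \pt^l v\Vert_{5-l}$ only needs a Jacobian estimate at one higher time-order (from Lemma~\ref{LnormalJ1} when $l=3$ and from step (ii) when $l=2$).

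For the velocity estimates, I would apply the div-curl elliptic estimate (Lemma~\ref{elliptic}) to $\pt^{l+1}\eta$, splitting the $H^{5-l}$-norm into divergence, curl, boundary-trace, and $L^2$ pieces exactly as in Lemma~\ref{LnormalJ1}. For the divergence, expand
\begin{align*}
\dive\pt^{l+1}\eta
=
\pt^{l+1}J
+(\delta^s_r-a^s_r)\pt^{l+1}\eta^r,_s
+\sum_{m=1}^{l+1} c_m \pt^m a^s_r \pt^{l+1-m}\eta^r,_s,
\end{align*}
and absorb the commutators using $\Vert a-I_3\Vert_2\les T$, the fundamental theorem of calculus, and the Sobolev inequalities. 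The main term $\Vert \pt^{l+1}J\Vert_{4-l}$ is controlled by Lemma~\ref{LnormalJ1} for $l=3$ and by step (ii) for $l=2$. The curl is provided by Lemma~\ref{Lcurl} at the required derivative count, with any lower counts filled in via the fundamental theorem of calculus. The boundary trace is reduced via Lemma~\ref{normaltrace} to tangential traces already given by Lemmas~\ref{Lenergy2}--\ref{Lenergy3} plus the divergence, while the residual $\Vert \pt^{l+1}\eta\Vert_0^2$ comes from the fundamental theorem of calculus and $\Vert \pt^5 v\Vert_0^2\leq E(t)$.

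For the Jacobian estimates, I would extend the argument of \eqref{EQ43}: applying $\pt^l$ to the third component of \eqref{EQ100} and solving algebraically for $\pt^l J,_3$ (the coefficient $2\rho_0 a^3_3 J^{-3}$ is uniformly positive by the a~priori assumptions in Section~\ref{sec02}), one obtains $\pt^l J,_3$ as a linear combination of $\pt^{l+2}\eta^3=\pt^{l+1}v^3$ and lower-order nonlinearities. To recover the full $H^{5-l}$-norm I apply spatial derivatives $D^m$ with $0\le m\le 4-l$ (all combinations of $\bp$ and $\partial_3$) to this rearranged identity. The purely tangential piece $\Vert \bp^{5-l}\pt^l J\Vert_0^2$ is provided by Lemma~\ref{Lenergy2} for $l=2$ and Lemma~\ref{Lenergy3} for $l=3$; every $\partial_3$ applied to the equation transfers one unit of normal regularity from $\pt^{l+1}v^3$ to $\pt^l J$, so the right-hand side stays within $\Vert \pt^{l+1}v\Vert_{4-l}$ (from Lemma~\ref{LnormalJ1} for $l=3$ and from step (i) for $l=2$). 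Commutator products in $a$, $J^{-2}$, $J^{-3}$ and their derivatives are absorbed via \eqref{EQkap}--\eqref{EQkap2}, the H\"older and Sobolev inequalities, and the a~priori assumptions.

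The main obstacle is the combinatorial bookkeeping of commutator terms that arise when up to three spatial derivatives (the worst case, $l=2$) are applied to the nonlinear right-hand side of \eqref{EQ100}. For each such product one must verify that at most one factor carries top-order regularity, with the remaining factors either bounded in $L^\infty$ via \eqref{EQkap} or absorbed by the $T$-smallness $\Vert a-I_3\Vert_2\les T$. This step parallels the chain \eqref{EQ43}--\eqref{EQ112} but with additional derivatives to track, and is routine in spirit once the leading terms have been isolated.
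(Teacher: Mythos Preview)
Your proposal is correct and follows essentially the same approach as the paper: the same ordering (i)--(iv), the div-curl elliptic estimate for $\pt^l v$ with the divergence expanded via \eqref{EQ20}, the curl handled by Lemma~\ref{Lcurl}, and the Jacobian normal derivatives recovered by differentiating \eqref{EQ100} (with $i=3$) and iterating in $\partial_3$. The only minor refinement in the paper is that for the boundary trace $\Vert \pt^{l+1}\bp\eta\cdot N\Vert_{H^{s-1.5}(\partial\Omega)}$ with $s\ge 2$, one first interpolates to reduce to an $H^{-0.5}$ trace (handled by Lemma~\ref{normaltrace}) plus a lower $L^2$ trace (handled by Lemma~\ref{trace} with Sobolev interpolation and a $\delta$-absorption), rather than invoking Lemma~\ref{normaltrace} alone.
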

\colb

\begin{proof}[Proof of Lemma~\ref{LnormalJ2}]
We start with the term $\Vert \pt^3 v\Vert_2^2$.
Using Lemma~\ref{elliptic}, we have
\begin{align*}
	\begin{split}
		\Vert \pt^3 v\Vert_2^2
		\les
		\underbrace{\Vert \pt^4 \dive \eta\Vert_1^2}_{\KK_1}
		+
		\underbrace{\Vert \pt^4 \curl \eta\Vert_1^2}_{\KK_2}
		+
		\underbrace{	\Vert \pt^4 \bp \eta \cdot N
			\Vert_{H^{0.5}(\partial \Omega)}^2}_{\KK_3}
		+
		\underbrace{	\Vert \pt^4 \eta\Vert_1^2}_{\KK_4}
		.
	\end{split}
\end{align*}
From the Leibniz rule it follows that
$
\dive \pt^4 \eta
=
\pt^4 J
+
(	\delta^s_r 
-
a^s_r)
\pt^4 \eta^r,_s
+
\sum_{l=1}^3
c_l
\pt^l a^s_r \pt^{4-l} \eta^r,_s
$. 
Thus, we have
\begin{align}
	\begin{split}
	\KK_1
	&
	\les
	\underbrace{\Vert \pt^4 J\Vert_1^2}_{\KK_{11}}
	+
	\underbrace{
	\Vert (a-I_3) \pt^4 D\eta\Vert_1^2}_{\KK_{12}}
	+
	\underbrace{		\sum_{l=1}^3
	\Vert \pt^l a \pt^{4-l} D\eta\Vert_1^2}_{\KK_{13}}
	.
	\label{EQ599}
	\end{split}
\end{align}
The term $\KK_{11}$ is already estimated in Lemma~\ref{LnormalJ1}.
For the term $\KK_{12}$, using the Leibniz rule, we obtain
\begin{align}
	\begin{split}
	\KK_{12}
	\les	
	\Vert a-I_3 \Vert_{L^\infty}^2 
	\Vert\pt^4 D\eta\Vert_1^2
	+
	\Vert D^2 \eta \Vert_{L^\infty}^2 
	\Vert\pt^4 D\eta\Vert_0^2
		\les
1+ T P(\sup_{t\in [0,T]} E(t))
,
\label{EQ630}
	\end{split}
\end{align}
since $\Vert a-I_3\Vert_2 \les T$ and $\Vert \pt^3 v\Vert_2^2 + \Vert \pt^4 v\Vert_1^2 \leq E(t)$.
In the last inequality of \eqref{EQ630}, we also used the H\"older and Sobolev inequalities and the fundamental theorem of calculus.
Similarly, the term $\KK_{13}$ is estimated as
\begin{align*}
	\KK_{13}
	\les
	1+
	TP(\sup_{t \in [0,T]} E(t))
	.
\end{align*}
The term $\KK_2$ is already estimated in Lemma~\ref{Lcurl}.
For the term $\KK_3$, we appeal to Lemmas~\ref{trace}--\ref{normaltrace} and~\ref{Lenergy3}, obtaining
\begin{align*}
	\begin{split}
		\KK_3
		&\les
		\Vert \pt^4 \bp^2 \eta \cdot N \Vert_{H^{-0.5} (\partial\Omega)}
		+
		\Vert \pt^4 \bp \eta \cdot N \Vert_{L^2 (\partial \Omega)}
		\\&
		\les
		\Vert \pt^4 \bp^2 \eta\Vert_0^2
		+
		\Vert \dive \pt^4 \bp \eta\Vert_0^2
		+
		\delta \Vert \pt^4 \eta\Vert_2^2
		+
		C_\delta \Vert \pt^4 \eta\Vert_0^2
		\les	
		C_\delta
		+
		\delta \sup_{t\in [0,T]} E(t)
		+
		C_\delta T P(\sup_{t\in [0,T]} E(t))
		,
	\end{split}
\end{align*}
where we also used the Sobolev interpolation in the second inequality and \eqref{EQ599} in the last inequality.
We bound the term $\KK_4$ using the fundamental theorem of calculus as
\begin{align*}
	\begin{split}
		\KK_4
		\les
		1+T P(\sup_{t\in [0,T]} E(t))
		,
	\end{split}
\end{align*}
since $\Vert \pt^4 v\Vert_1^2 \leq E(t)$.
Consequently, we conclude the estimate
\begin{align}
	\Vert \pt^3 v\Vert_2^2
	\les
	C_\delta
	+
	\delta \sup_{t\in [0,T]} E(t)
	+
	C_\delta T P(\sup_{t\in [0,T]} E(t))
	.
	\label{EQ259}
\end{align}

Next we derive the estimate of $\Vert \pt^3 J\Vert_2^2$.
The estimate of $\Vert \pt^3 \bp^2 J\Vert_0^2$ is already established in Lemma~\ref{Lenergy3}.
For the estimate of $\Vert \pt^3 \bp \partial_{3} J\Vert_0^2$, we proceed analogously as in Lemma~\ref{LnormalJ1} by applying $\pt^3 \bp$ to \eqref{EQ100}, obtaining
\begin{align}
	\Vert \pt^3 \bp \partial_{3} J\Vert_0^2
	\les
		C_\delta
	+
	\delta \sup_{t\in [0,T]} E(t)
	+
	C_\delta T P(\sup_{t\in [0,T]} E(t))
	.
	\label{EQ260}
\end{align}
%
It remains to estimate $\Vert \pt^3 \partial_{33} J\Vert_0^2$.
Applying $\pt^3$ to \eqref{EQ100}, we obtain
\begin{align}
	\begin{split}
		&
		2 \rho_0 a^3_3
		\pt^3 J,_3
		=
		4
		a_3^3 \pt^3 J
		+
		J^3
		\big[ 
		\pt^5 \eta^3
		+
		\rho_0 \pt^3
		(a^\alpha_3 J^{-2},_\alpha)
		+
		\sum_{l=0}^2
		c_l                                         
		\pt^{l} 
		J^{-2} 
		\pt^{3-l} a^3_3
		\\&\indeq
		+
		\sum_{l=1}^3
		c_l  \rho_0
		\pt^l a^3_3
		\pt^{3-l} 
		J^{-2},_3
		+
		\sum_{l=1}^3
		c_l
		\rho_0 	a^3_3 
		\pt^l J^{-3} 
		\pt^{3-l} J,_3
		+
		\sum_{l=1}^2
		c_l
		a^3_3 \pt^l J^{-3} 
		\pt^{3-l} J
		\big]
		.
		\llabel{EQ643}
	\end{split}
\end{align}
Applying $\partial_{3}$ to the above equation, while noting that $\rho_0,_3 = -1$, we obtain
\begin{align}
	\begin{split}
		&
		2 \rho_0 a^3_3
		\pt^3 J,_{33}
		=
		6
		a^3_3
		\pt^3 J,_3
		-2 J^3
		\rho_0 (J^{-3} a^3_3),_3
		\pt^3 J,_3
		+
		4 J^3 (J^{-3} a^3_3),_3 \pt^3 J
		\\&\indeq
		+
		J^3
		\big[ 
		\pt^5 \eta^3
		+
		\rho_0 \pt^3
		(a^\alpha_3 J^{-2},_\alpha)
		+
		\sum_{l=0}^2
		c_l                                         
		\pt^{l} 
		J^{-2} 
		\pt^{3-l} a^3_3
		\\&\indeq
		+
		\sum_{l=1}^3
		c_l  \rho_0
		\pt^l a^3_3
		\pt^{3-l} 
		J^{-2},_3
		+
		\sum_{l=1}^3
		c_l
		\rho_0 	a^3_3 
		\pt^l J^{-3} 
		\pt^{3-l} J,_3
		+
		\sum_{l=1}^2
		c_l
		a^3_3 \pt^l J^{-3} 
		\pt^{3-l} J
		\big],_3
		.
		\label{EQ644}
	\end{split}
\end{align}
Now we estimate the terms on the right hand side of \eqref{EQ644} in $L^2$.
Using the H\"older and Sobolev inequalities and the fundamental theorem of calculus, we obtain
\begin{align*}
	\begin{split}
		\Vert a^3_3 \pt^3 J,_3 \Vert_0^2
		\les
		\Vert a^3_3\Vert_{L^\infty}^2
		\Vert \pt^3 J,_3\Vert_0^2
		\les
		1+TP(\sup_{t \in [0,T]} E(t)).
	\end{split}
\end{align*}
Similarly, we have
\begin{align*}
	\begin{split}
	\Vert J^3
	\rho_0 (J^{-3} a^3_3),_3
	\pt^3 J,_3\Vert_0^2
	\les
		1+TP(\sup_{t \in [0,T]} E(t))
	\end{split}
\end{align*}
and
\begin{align*}
	\begin{split}
		\Vert J^3 (J^{-3} a^3_3),_3 \pt^3 J \Vert_0^2
		\les
		1+TP(\sup_{t \in [0,T]} E(t))
	\end{split}
\end{align*}
The term $\Vert  J^3 \pt^5 \eta,_3 \Vert_0^2$ is already estimated in Lemma~\ref{LnormalJ1}. 
The highest order term in $J^3( \rho_0 \pt^3
(a^\alpha_3 J^{-2},_\alpha)),_3$ scales like $\pt^3 \bp J,_3$ which is estimated in \eqref{EQ260}.
The rest of the terms in $J^3( \rho_0 \pt^3
(a^\alpha_3 J^{-2},_\alpha)),_3$, as well as the terms on the right hand side of \eqref{EQ644} are of lower order which can estimated using the H\"older and Sobolev inequalities and the fundamental theorem of calculus.
Collecting the above estimates, we infer from \eqref{EQ644} that
\begin{align}
	\Vert \pt^3 J,_{33} \Vert_0^2
	\les
	C_\delta
	+
	\delta \sup_{t\in [0,T]} E(t)
	+
	C_\delta T P(\sup_{t\in [0,T]} E(t))
	.
	\label{EQ261}
\end{align}
The estimates of $\Vert \pt^2 v\Vert_3^2$ and $\Vert \pt^2 J\Vert_3^2$ follow in a similar fashion using the above arguments. Therefore, the proof of the lemma is completed by combining \eqref{EQ259} and \eqref{EQ261}.
\end{proof}

\cole
\begin{Lemma}
\label{LnormalJ3}
For $\delta \in (0,1)$, we have
\begin{align}
\begin{split}
	&
	\sup_{t\in [0,T]}
	\left(
	\Vert v (t)\Vert_{4}^2
	+
	\Vert \pt v (t)\Vert_{3}^2
	+
	\Vert \pt J (t)\Vert_{4}^2
	\right)
	\les
	C_\delta
	+
	\delta \sup_{t\in [0,T]} E(t)
	+
	C_\delta T P(\sup_{t\in [0,T]} E(t))
	,
	\label{EQ721}
\end{split}
\end{align}
where $C_\delta>0$ is a constant depending on $\delta$.
\end{Lemma}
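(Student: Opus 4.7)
The plan is to adapt the strategy of Lemmas~\ref{LnormalJ1} and~\ref{LnormalJ2}, now for the three top-regularity pieces of $E(t)$ that involve at most one time derivative. Since the boundary estimate for $\Vert v\Vert_4^2$ will ultimately need the pure tangential bound $\Vert \bp^4 \pt J\Vert_0^2$, I would establish the three bounds in the order $\Vert \pt J\Vert_4^2$, then $\Vert \pt v\Vert_3^2$, then $\Vert v\Vert_4^2$.

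For $\Vert \pt J\Vert_4^2$ I would invert \eqref{EQ100} explicitly. Using the identity $a^k_i \eta^j,_k = J \delta^j_i$ one obtains
\begin{align*}
J,_k = \frac{J^2}{2\rho_0}\, \eta^j,_k \big(v^j_t - 2 a^3_j J^{-2}\big).
\end{align*}
Applying $\pt \bp^l \partial_3^{3-l}$ to both sides, for $l=0,\ldots,3$, yields a representation of every four-spatial-derivative combination of $\pt J$. The top-order contribution on the right is $\tfrac{J^2}{2\rho_0}\eta^j,_k \bp^l \partial_3^{3-l} \pt^2 v^j$, whose $L^2$ norm is controlled by $\Vert \pt^2 v\Vert_3 \leq \sqrt{E}$ from Lemma~\ref{LnormalJ2}; every remaining term loses at least one derivative to $\eta$, $a$, or $J$ and is absorbed by assumptions~(i)--(vi), the fundamental theorem of calculus, and Lemmas~\ref{Lenergy1}--\ref{LnormalJ2}. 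In particular, taking $l=3$ delivers the pure tangential bound $\Vert \bp^4 \pt J\Vert_0^2$, which is reused below.

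For $\Vert \pt v\Vert_3^2$ I would apply the div-curl elliptic estimate (Lemma~\ref{elliptic}) to $\pt v = \pt^2 \eta$. The divergence $\dive \pt^2 \eta = \pt^2 J + (I-a)\pt^2 D\eta + \text{l.o.t.}$ is controlled by $\Vert \pt^2 J\Vert_3^2 \leq E$ from Lemma~\ref{LnormalJ2} together with $\Vert a - I\Vert_{H^2} \les T$; the curl is handled by Lemma~\ref{Lcurl}. The boundary norm $\Vert \pt^2 \bp \eta \cdot N\Vert_{H^{1.5}(\partial\Omega)}^2$ is reduced iteratively via $|g|_{H^s}^2 \les |g|_{L^2}^2 + |\bp g|_{H^{s-1}}^2$ down to $|\bp^4 \pt^2 \eta \cdot N|_{H^{-0.5}(\Gamma)}^2$, then controlled via the normal trace theorem (Lemma~\ref{normaltrace}) together with the bounds on $\Vert \bp^2 \pt^2 \eta\Vert_{1.5}^2$ (Lemma~\ref{Lenergy3}) and $\Vert \bp^3 \pt^2 J\Vert_0^2$ (Lemma~\ref{LnormalJ2}); the intermediate traces $|\bp^k \pt^2 \eta|_{L^2(\Gamma)}$ are bounded by trace-Sobolev interpolation, with top-order pieces absorbed as $\delta \sup E$, exactly as in Lemma~\ref{LnormalJ2}. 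For $\Vert v\Vert_4^2$ I would again apply Lemma~\ref{elliptic} to $v$. The divergence reduces via $\dive v = \pt J - (a-I)Dv$ and assumption~(iv) to $\Vert \pt J\Vert_3^2 \les 1 + TP(\sup E)$; the curl is from Lemma~\ref{Lcurl}. The boundary norm $\Vert \bp v \cdot N\Vert_{H^{2.5}(\partial\Omega)}^2$ is reduced iteratively three times to $|\bp^4 v \cdot N|_{H^{-0.5}(\Gamma)}^2$, and the normal trace theorem converts it into $\Vert \bp^4 v\Vert_0^2 + \Vert \bp^4 \dive v\Vert_0^2$: the first factor is from Lemma~\ref{Lenergy1}, and the second expands via $\bp^4 \dive v = \bp^4 \pt J - \bp^4\big((a-I)Dv\big)$, so that the $\Vert \bp^4 \pt J\Vert_0^2$ piece already obtained in the first step, together with the smallness $\Vert a-I\Vert_{H^2} \les T$, closes the estimate for small $T$.

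The main obstacle is the pure tangential bound $\Vert \bp^4 \pt J\Vert_0$: a direct Leibniz expansion of $\pt J = a^s_r v^r,_s$ forces a $\bp^4 Dv$ term carrying five spatial derivatives of $v$, which is not controlled by any part of $E$. The explicit inversion of the momentum equation bypasses this, transferring the offending derivative onto $v_t$ and trading it against the available $\Vert \pt^2 v\Vert_3$ bound. A secondary difficulty is the boundary absorption in the $\Vert v\Vert_4$ estimate, where the $\Vert \bp^4 ((a-I)Dv)\Vert_0$ term must be closed using the $T$-smallness of $\Vert a - I\Vert_{H^2}$ from assumption~(vi) combined with a short recursive div-curl step on $\bp^4 v$.
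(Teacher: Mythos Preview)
Your ordering introduces a genuine circularity. In the inversion formula $J,_k = \tfrac{J^2}{2\rho_0}\eta^j,_k\big(v^j_t - 2a^3_j J^{-2}\big)$, the prefactor $\eta^j,_k$ carries one spatial derivative. When you apply $\pt D^3$ and let all four derivatives fall on $\eta^j,_k$, you produce $\tfrac{J^2}{2\rho_0} v^j_t\,D^4 v^j$, whose $L^2$ norm is $\|v_t\|_{L^\infty}\|v\|_4$ with an $O(1)$ coefficient. This is not ``lower order'': it is exactly at the level of $\|v\|_4$, which you have not yet established, and it carries no smallness in $T$ or $\delta$. So the bound you get is $\|\pt J\|_4^2 \les (\text{good}) + C\|v\|_4^2$ with $C$ of order one, which cannot be absorbed.

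The motivation for your ordering rests on a miscount of the normal trace step. Lemma~\ref{normaltrace} applied to $F=\bp^3 v$ yields $\|\bp^4 v\cdot N\|_{H^{-0.5}(\partial\Omega)} \les \|\bp^4 v\|_0 + \|\dive \bp^3 v\|_0$, i.e.\ $\|\bp^3\dive v\|_0$, not $\|\bp^4\dive v\|_0$. Expanding $\bp^3\dive v = \bp^3\pt J + \bp^3[(I-a)Dv]$ requires only $\|\pt J\|_3$, which is available directly by the fundamental theorem of calculus from $\|\pt^2 J\|_3^2\le E$. Hence the $\|v\|_4$ estimate does not need any part of $\|\pt J\|_4$ and should come first. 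The paper proceeds in exactly this order: $\|\pt v\|_3$ by a one-line FTC argument (since $\|\pt^2 v\|_3^2\le E$; your div-curl route here is correct but unnecessary), then $\|v\|_4$ by the Hodge estimate using only $\|\pt J\|_3$, and finally $\|\pt J\|_4$ by differentiating \eqref{EQ100}, where the top-order terms $\pt D^3 a^3_3\sim D^3\bp v$ are now controlled by the already-established $\|v\|_4$.
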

\colb

\begin{proof}[Proof of Lemma~\ref{LnormalJ3}]
Using the fundamental theorem of calculus, we obtain
\begin{align*}
	\Vert \pt v\Vert_3^2
	\les
	1+TP(\sup_{t\in [0,T]} E(t))
	,
\end{align*}	
since $\Vert \pt^2 v\Vert_3^2 \leq E(t)$.
Next we estimate the term $\Vert v\Vert_4^2$.
From Lemma~\ref{elliptic} it follows that
\begin{align*}
	\begin{split}
		\Vert v\Vert_4^2
		\les
		\underbrace{
		\Vert \pt \dive \eta\Vert_3^2}_{\KK_1}
		+
		\underbrace{
		\Vert \pt \curl \eta\Vert_3^2}_{\KK_2}
		+
		\underbrace{	
		\Vert \pt \bp \eta \cdot N
		\Vert_{H^{2.5}(\partial \Omega)}^2}_{\KK_3}
		+
		\underbrace{
		\Vert \pt \eta\Vert_0^2}_{\KK_4}
		.
	\end{split}
\end{align*}
Note that
$
\dive \pt \eta
=
\pt J
+
(	\delta^s_r 
-
a^s_r)
\pt \eta^r,_s
$.
From the H\"older and Sobolev inequalities and the fundamental theorem of calculus it follows that
\begin{align}
	\begin{split}
	\KK_1
	&
	\les
	\Vert \pt J\Vert_3^2
	+
	\Vert (a-I_3) \pt D\eta\Vert_3^2
	\les
	\Vert \pt J\Vert_3^2
	+
	\Vert a-I_3 \Vert_{L^\infty}^2
	\Vert \pt D\eta\Vert_3^2
	+
	\Vert a-I_3 \Vert_3^2 
	\Vert \pt D\eta\Vert_{L^\infty}^2
	\\&
	\les
	1+T P(\sup_{t\in [0,T]} E(t))
	,
	\label{EQ699}
	\end{split}
\end{align}
since $\Vert a-I_3\Vert_2 \les T$ and $\Vert \pt^2 J\Vert_3^2 + \Vert v\Vert_4^2 \leq E(t)$.
The term $\KK_2$ is already estimated in Lemma~\ref{Lcurl}.
For the term $\KK_3$, we appeal to Lemmas~\ref{trace}--\ref{normaltrace} and \ref{Lenergy1}, we obtain
\begin{align*}
	\begin{split}
		\KK_3
		&\les
		\Vert \pt \bp^4 \eta \cdot N \Vert_{H^{-0.5} (\partial\Omega)}
		+
		\Vert \pt \bp \eta \cdot N \Vert_{L^2 (\partial \Omega)}
		\\&
		\les
		\Vert \pt \bp^4 \eta\Vert_0^2
		+
		\Vert  \pt \bp^3 \dive \eta\Vert_0^2
		+
		\Vert \pt \eta\Vert_2^2
		\les	
			C_\delta
		+
		\delta \sup_{t\in [0,T]} E(t)
		+
		C_\delta T P(\sup_{t\in [0,T]} E(t))
		,
	\end{split}
\end{align*}
where we used \eqref{EQ699} in the last inequality.
We bound the term $\KK_4$ using the fundamental theorem of calculus as
\begin{align*}
	\begin{split}
		\KK_4
		\les
		1+ T P(\sup_{t\in [0,T]} E(t))
		.	
	\end{split}
\end{align*}
Consequently, we conclude the estimate
\begin{align}
	\Vert v\Vert_4^2
	\les
	C_\delta
	+
	\delta \sup_{t\in [0,T]} E(t)
	+
	C_\delta T P(\sup_{t\in [0,T]} E(t))
	.
	\label{EQ677}
\end{align}

Finally, we derive the estimate of $\Vert \pt J\Vert_4^2$.
Applying $\pt D^3$ to \eqref{EQ100}, where $D= (\partial_1, \partial_2, \partial_3)$, we obtain
\begin{align*}
	\begin{split}
		&
		2\rho_0 J^{-3}  a^3_3 \pt D^3 J,_3
		=
		2 \pt D^3 (\rho_0 J^{-3} a^3_3) J,_3
		-
		2 J^{-3} \rho_0 a^\alpha_3 \pt D^3 J,_\alpha
		-
		2\pt D^3 (\rho_0 J^{-3} a^\alpha_3) J,_\alpha
		+
		D^3 \pt^3 \eta^3
		\\&\indeq
		-
		2J^{-2} \pt D^3 a^3_3
		-
		2  a^3_3 \pt D^3 J^{-2}
		+
		\sum_{l=0}^1
		\sum_{\substack{m=0\\1\leq l+m \leq 3}}^3
		c_{l,m} \pt^l D^m
		(\rho_0 J^{-3} a^3_3)
		\pt^{1-l} D^{3-m} J,_3
		\\&\indeq
		+
		\sum_{l=0}^1
		\sum_{\substack{m=0\\1\leq l+m \leq 3}}^3
		c_{l,m} \pt^l D^m
		(\rho_0 J^{-3} a^\alpha_3)
		\pt^{1-l} D^{3-m} J,_\alpha
		+
		\sum_{l=0}^1
		\sum_{\substack{m=0\\1\leq l+m \leq 3}}^3
		c_{l,m} \pt^l D^m a^3_3
		\pt^{1-l} D^{3-m} J^{-2}
		\\&
		=:
		\II_{1}
		+
		\II_2
		+
		\II_3
		+
		\II_4
		+
		\II_5
		+
		\II_6
		+
				\II_7
		+
		\II_8
		+
		\II_9
		.
	\end{split}
\end{align*}
The highest order term in $\II_{1}$ scales like $\rho_0 J,_3 D^3 \bp v$, which is estimated using H\"older and Sobolev inequalities and \eqref{EQ677}.
The rest of the terms in $\II_1$ are of lower order which can be estimated using the H\"older and Sobolev inequalities and the fundamental theorem of calculus. 
Thus, we have
\begin{align*}
	\begin{split}
	\Vert \II_1 \Vert_0^2
	\les
	C_\delta
	+
	\delta \sup_{t\in [0,T]} E(t)
	+
	C_\delta T P(\sup_{t\in [0,T]} E(t))
	.
	\end{split}
\end{align*}
For the term $\II_2$, we have
\begin{align*}
	\begin{split}
	\Vert \II_2 \Vert_0^2
	\les	
	\Vert a^3_\alpha \Vert_{L^\infty}^2
	\Vert \pt J\Vert_4^2
	\les
	1+T P(\sup_{t \in [0,T]} E(t)),
	\end{split}
\end{align*}
since $\Vert a-I_3\Vert_2 \les T$.
The highest order term in $\II_3$ is of the form $\rho_0 a^\alpha_3 J,_\alpha \pt D^3 J$ which can be estimated using the fundamental theorem of calculus, since $\Vert \pt^2 J\Vert_3^2 \leq E(t)$.
The rest of the terms in $\II_3$ are of lower order which can be estimated using the H\"older and Sobolev inequalities and the fundamental theorem of calculus.
Thus, we have
\begin{align*}
	\begin{split}
		\Vert \II_3 \Vert_0^2
		\les
		1+T P(\sup_{t \in [0,T]} E(t)).
	\end{split}
\end{align*}
The term $\II_4$ is already estimated in Lemma~\ref{LnormalJ2}, while the term $\II_5$ is estimated using H\"older and Sobolev inequalities and \eqref{EQ677}.
The term $\II_6$ is estimated using the fundamental theorem of calculus, since $\Vert \pt^2 J\Vert_3^2 \leq E(t)$.
The terms $\II_7$, $\II_8$, and $\II_9$ are of lower order which can be estimated using the H\"older and Sobolev inequalities and the fundamental theorem of calculus as
\begin{align*}
	\begin{split}
	\Vert \II_7 \Vert_0^2
	+
	\Vert \II_8 \Vert_0^2
	+
	\Vert \II_9 \Vert_0^2
	\les
	1+T P(\sup_{t \in [0,T]} E(t)).
	\end{split}
\end{align*}
Consequently, we conclude
\begin{align*}
	\Vert \pt J,_3 \Vert_3^2
	\les
	C_\delta
	+
	\delta \sup_{t\in [0,T]} E(t)
	+
	C_\delta T P(\sup_{t\in [0,T]} E(t))
	.
\end{align*}
The estimate of $\Vert \pt \bp J\Vert_3^2$ follows analogously using the above arguments.
The proof of the lemma is thus completed by combining \eqref{EQ677}.
\end{proof}

\startnewsection{The improved Lagrangian flow map and Jacobian estimates}{sec06}
The next two lemmas provide the estimates of the improved regularity of solutions $J$ and $\eta$.
\cole
\begin{Lemma}
\label{Limproved1}
For $\delta \in (0,1)$, we have
\begin{align}
	\begin{split}
		&
	\sup_{t\in [0,T]}
	\left(
	\Vert \bp^3 J (t)\Vert_{1.5}^2
	+
	\Vert \bp^3 \eta (t) \Vert_{1.5}^2
	\right)
	\les
	C_\delta
	+
	\delta \sup_{t\in [0,T]} E(t)
	+
	C_\delta T P(\sup_{t\in [0,T]} E(t))
	,
	\end{split}
\end{align}
where $C_\delta>0$ is a constant depending on $\delta$.
\end{Lemma}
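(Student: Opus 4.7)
The estimate splits into two coupled bounds, for $\Vert \bp^3 \eta\Vert_{1.5}^2$ and for $\Vert \bp^3 J\Vert_{1.5}^2$. Both gain a half derivative beyond what $\Vert v\Vert_4^2 + \Vert \pt J\Vert_4^2 \le E(t)$ already deliver, and the extra half derivative is extracted from the weighted energy quantity $\Vert \rhog \bp^3 \pt^2 D\eta\Vert_0^2 \le E(t)$ via the Hardy inequality (Lemma~\ref{hardy}), exploiting that $\rhog$ is comparable to the distance to $\Gamma$ by \eqref{EQ184}.

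\textbf{Step 1: elliptic estimate for $\bp^3 \eta$.} I apply Lemma~\ref{elliptic} to obtain
\begin{align*}
\Vert \bp^3 \eta\Vert_{1.5}^2
\les
\Vert \dive \bp^3 \eta\Vert_{0.5}^2
+ \Vert \curl \bp^3 \eta\Vert_{0.5}^2
+ |\bp^3 \eta \cdot N|_1^2
+ \Vert \bp^3 \eta\Vert_0^2.
\end{align*}
The curl term is bounded by Lemma~\ref{Lcurl}. The boundary trace equals $|\bp^3 \eta^3|_1^2 \les |\bp^4 \eta^3|_0^2 + |\bp^3 \eta^3|_0^2$, bounded by Lemma~\ref{Lenergy1} together with the fundamental theorem of calculus. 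The last term is controlled by $\Vert v\Vert_4^2 \le E(t)$ after one time integration. For the divergence, the identity $J(t)-1 = \int_0^t a^s_r v^r,_s\,dt'$ gives
\begin{align*}
\bp^3 \dive \eta - \bp^3 J
=
\sum_{l=0}^{3} c_l \int_0^t \bp^l(\delta^s_r - a^s_r)\bp^{3-l} v^r,_s\,dt',
\end{align*}
whose $H^{0.5}$ norm is $\les TP(\sup_t E(t))$ using $\Vert a - I_3\Vert_2 \les T$ and $\Vert v\Vert_4^2 \le E(t)$. Thus the divergence reduces to $\Vert \bp^3 J\Vert_{0.5}^2$, which is handled by interpolation $\Vert \bp^3 J\Vert_{0.5}^2 \les \Vert \bp^3 J\Vert_0 \Vert \bp^3 J\Vert_1$ together with Lemma~\ref{Lenergy1} and the normal-derivative estimate of Lemma~\ref{LnormalJ3}.

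\textbf{Step 2: $\Vert \bp^3 J\Vert_{1.5}^2$ via Hardy.} Starting from the weighted bound $\Vert \rhog \bp^3 \pt^2 D\eta\Vert_0^2 \le E(t)$, I integrate twice in time:
\begin{align*}
\Vert \rhog \bp^3 D\eta(t)\Vert_0^2
\les M_0 + T P(\sup_{t\in[0,T]} E(t)),
\end{align*}
using that $\Vert \rhog \bp^3 \pt D\eta(0)\Vert_0$ and $\Vert \rhog \bp^3 D\eta(0)\Vert_0$ are controlled by $M_0$. Since $\rhog$ vanishes on $\Gamma$ at the rate of the distance function by \eqref{EQ184} and is strictly positive away from $\Gamma$, Lemma~\ref{hardy} converts this weighted $L^2$ information into a genuine half derivative of improved fractional regularity on $\bp^3 D\eta$, hence on $\bp^3 J$ via the cofactor identity $3\bp^3 J = a^s_i \bp^3 \eta^i,_s + \text{commutators}$ together with the $\pt J = a^s_r v^r,_s$ identity integrated in time. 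The positive boundary contribution coming from the $\rhol$-part of the decomposition $\rho_0^2 = \rhog^2 + \rhol^2$ (which produced \eqref{EQ84} and the boundary term $\int_\Gamma |\bp^4 \eta^3|^2$ in Lemma~\ref{Lenergy1}), combined with the boundary identity \eqref{EQLJ}, supplies the missing tangential boundary control and allows the remaining commutators to be absorbed into $\delta \sup_t E(t) + C_\delta + C_\delta T P(\sup_t E(t))$.

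\textbf{Main obstacle.} The principal difficulty is a half-derivative \emph{circularity}: the top-order commutator $\bp^3(a - I_3) \cdot Dv$ appearing in $\bp^3 \dive \eta$ and in $\bp^3(a^s_r v^r,_s)$ naively demands $\bp^3 a \in H^{0.5}$, which is tantamount to the $H^{1.5}$ flow-map regularity we are trying to establish. Resolving this requires splitting each such commutator so that either the Hardy-improved weighted bound supplies the missing half derivative near $\Gamma$ (where $\rhog$ vanishes), or the smallness $\Vert a - I_3\Vert_{L^\infty} \les T$ is extracted globally; in either case the factor in front must carry either $\delta$, $T$, or a lower-order weight, never leaving a bare top-order term on the right-hand side, which is exactly what the weighted-versus-unweighted decomposition of $\rho_0^2$ into $\rhog^2 + \rhol^2$ is engineered to enable.
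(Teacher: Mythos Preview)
Your Step~1 is essentially the paper's argument and is fine: Hodge decomposition, Lemma~\ref{Lcurl} for the curl, Lemma~\ref{Lenergy1} for the boundary trace, and the divergence reduces to $\Vert \bp^3 J\Vert_{0.5}^2$, which follows already from $\Vert \pt J\Vert_4^2\le E(t)$ and the fundamental theorem of calculus (your interpolation detour is unnecessary but harmless).

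Step~2, however, has a genuine gap. After integrating $\Vert \rhog\,\bp^3\pt^2 D\eta\Vert_0^2$ twice in time you obtain a bound on $\Vert \rhog\,\bp^3 D\eta\Vert_0^2$, and then you invoke Lemma~\ref{hardy}. But Hardy with $a=b=1$ applied to $F=\bp^3\eta$ gives only
\[
\Vert \bp^3\eta\Vert_{0.5}^2 \les \int_\Omega w\bigl(|\bp^3 D\eta|^2+|\bp^3\eta|^2\bigr),
\]
which is a full derivative short of the target $\Vert \bp^3\eta\Vert_{1.5}^2$ (equivalently $\Vert \bp^3 J\Vert_{1.5}^2$). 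To get $\Vert \bp^3 D\eta\Vert_{0.5}$ from Hardy you would need control of $\int w|\bp^3 D^2\eta|^2$, which you do not have. The cofactor and $\pt J$ identities you cite do not manufacture the missing derivative.

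The paper's resolution is the step you are missing: do \emph{not} integrate in time. Apply Hardy directly to $F=\bp^3\pt^2\eta$ to obtain $\Vert \bp^3\pt^2\eta\Vert_{0.5}^2 \les \Vert \rhog\,\bp^3\pt^2 D\eta\Vert_0^2 + \Vert \pt v\Vert_3^2$, which is controlled by Lemma~\ref{Lenergy2}. Then use the momentum equation \eqref{EQ200} (equivalently \eqref{EQ100}): since $v_t^i = 2\rho_0 J^{-3}a^k_i J,_k + 2a^3_i J^{-2}$, applying $\bp^3$ and solving for $\bp^3 J,_3$ expresses $\Vert \bp^3 J,_3\Vert_{0.5}$ in terms of $\Vert \bp^3\pt^2\eta\Vert_{0.5}$ plus lower-order terms. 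In other words, the two time derivatives are \emph{traded} for one spatial derivative via the equation, not discarded by time integration. This is exactly the ``scaling between space and time is one to two at the pure tangential level'' mechanism flagged in the introduction, and it is what closes the half-derivative circularity you correctly identify in your ``Main obstacle'' paragraph.
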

\colb

\begin{proof}[Proof of Lemma~\ref{Limproved1}]
From Lemma~\ref{elliptic} it follows that
\begin{align*}
	\begin{split}
		\Vert \bp^3 \eta\Vert_{1.5}^2	
		\les
		\underbrace{ 
		\Vert \dive \bp^3 \eta \Vert_{0.5}^2}_{\JJ_1}
		+
		\underbrace{ 
			\Vert \curl \bp^3 \eta \Vert_{0.5}^2}_{\JJ_2}
		+
		\underbrace{
		\vert \bp^4 \eta \cdot N \vert_{0}^2}_{\JJ_3}
		+
		\underbrace{ 
		\Vert  \bp^3 \eta \Vert_{0}^2}_{\JJ_4}
		.
		\llabel{EQ622}
	\end{split}
\end{align*}
Note that from the Leibniz rule it follows that
$
 \bp^3 \dive \eta
 =
\bp^3 J
+
(\delta^s_r - a^s_r) 
\bp^{3}
\eta^r,_s
+
\sum_{l=1}^2
c_l
\bp^l
a^s_r
\bp^{3-l}
\eta^r,_s
$.
Thus, we have
\begin{align*}
	\begin{split}
	\JJ_1
	&
	\les
	\underbrace{
	\Vert \bp^3 J\Vert_{0.5}^2}_{\JJ_{11}}
	+
	\underbrace{ 
	\Vert (a- I_3) \bp^3 D\eta \Vert_{0.5}^2
	}_{\JJ_{12}}
	+
	\underbrace{
	\Vert
	\bp a
	\bp^{2} D \eta \Vert_{0.5}^2}_{\JJ_{13}}
	+
\underbrace{
	\Vert
	\bp^2 a
	\bp D \eta \Vert_{0.5}^2}_{\JJ_{14}}
	.
	\end{split}
\end{align*}
The term $\JJ_{11}$ is estimated using the fundamental theorem of calculus as
\begin{align*}
	\JJ_{11}
	\les
	1+TP(\sup_{t \in [0,T]} E(t))
	,
\end{align*}
since $\Vert \pt J\Vert_4^2 \leq E(t)$.
We bound the term $\JJ_{12}$ using the multiplicative Sobolev inequality \eqref{EQkap} as
\begin{align*}
	\begin{split}
	\JJ_{12}
	&
	\les
	\Vert a-I_3 \Vert_{1.5+\epsilon}^2
	\Vert \bp^3 D\eta\Vert_{0.5}^2
	\les T P(\sup_{t\in [0,T]} E(t))	
	,
	\end{split}
\end{align*}
since $\Vert a-I_3\Vert_2 \les T$.
For the term $\JJ_{13}$, we proceed analogously as $\JJ_{12}$, leading to 
\begin{align*}
	\begin{split}
		\JJ_{13}
		&
		\les
		\Vert \bp a \Vert_{1.5+\epsilon}^2
		\Vert \bp^2 D\eta\Vert_{0.5}^2
		\les
		1+ TP(\sup_{t \in [0,T]} E(t))	
		,
	\end{split}
\end{align*}
since $\Vert v\Vert_4^2 \leq E(t)$.
The term $\JJ_{14}$ is estimated using \eqref{EQkap} as
	\begin{align*}
		\begin{split}
			\JJ_{14}
			&
			\les
			\Vert \bp^2 a \Vert_{0.5}^2
			\Vert \bp D\eta\Vert_{1.5+\epsilon}^2
			\les
			1+ TP(\sup_{t \in [0,T]} E(t))	
			.
		\end{split}
\end{align*}
The term $\JJ_{2}$ is already established in Lemma~\ref{Lcurl}, while the term $\JJ_{3}$ is already estimated in Lemma~\ref{Lenergy1}.
We bound the term $\JJ_{4}$ using the fundamental theorem of calculus as
\begin{align*}
	\JJ_{4}
	\les
	1+T P(\sup_{t \in [0,T]} E(t))
	.
\end{align*}
Consequently, we conclude the estimate
\begin{align}
	\Vert \bp^3 \eta\Vert_{1.5}^2
	\les
		C_\delta
	+
	\delta \sup_{t\in [0,T]} E(t)
	+
	C_\delta T P(\sup_{t\in [0,T]} E(t))
	.
	\label{EQ290}
\end{align}

It remains to estimate the term $\Vert \bp^3 J\Vert_{1.5}^2$.
Applying $\bp^3$ to \eqref{EQ100}, we obtain
\begin{align}
	\begin{split}
	2 
	\bp^3 J,_3
	&
	=
	2 (a^3_3)^{-1}
	a^\alpha_3 \bp^3 J,_\alpha
	+
	\rho_0 ^{-1} (a^3_3)^{-1}
	J^3
	\big[ 
	\pt^2 \bp^3 \eta^3
	+
	\sum_{l=0}^3
	\pt^l a^3_3 \bp^{3-l} J^{-2}
	\\&\indeq
	+
	\sum_{l=1}^3
	c_l \bp^l (\rho_0 J^{-3} a^3_3)
	\bp^{3-l} J,_3
	+
	\sum_{l=1}^{3}
	c_l \bp^l (\rho_0 J^{-3} a^\alpha_3)
	\bp^{3-l} J,_\alpha
	\big]
	.
	\label{EQ549}
	\end{split}
\end{align}
Now we estimate the terms on the right hand side of \eqref{EQ549} in $H^{0.5}$.
Using \eqref{EQkap}, we obtain
\begin{align*}
	\begin{split}
	\Vert (a^3_3)^{-1}
	a^\alpha_3 \bp^3 J,_\alpha
	\Vert_{0.5}^2
	&
	\les
	\Vert  (a^3_3)^{-1} 
	a^\alpha_3\Vert_{2}^2
	\Vert \bp^3 \bp J\Vert_{0.5}^2
	\les
		\Vert a^\alpha_3\Vert_{2}^2
	\Vert \bp^3 J\Vert_{1.5}^2
	\les
	TP(\sup_{t \in [0,T]} E(t)),
	\end{split}
\end{align*}
since $\Vert a- I_3\Vert_2 \les T$, where we used the fact that $H^2$ is an algebra.
Similarly, we have
\begin{align*}
	\begin{split}
		\Vert \rho_0^{-1} (a^3_3)^{-1} 
		J^3 \pt^2 \bp^3 \eta^3 \Vert_{0.5}^2
		&
		\les
		\Vert \rho_0^{-1} (a^3_3)^{-1} 
		J^3 \Vert_{2}^2
		\Vert \pt^2 \bp^3 \eta \Vert_{0.5}^2
		\les
		\Vert \pt^2 \bp^3 \eta \Vert_{0.5}^2
		.
	\end{split}
\end{align*}
From Lemmas~\ref{hardy} and~\ref{Lenergy2} it follows that
\begin{align*}
	\begin{split}
	\Vert  \pt^2 \bp^3 \eta^3
	\Vert_{0.5}^2
	&
	\les
	\int_\Omega
	\dd (|\pt^2 \bp^3 D\eta|^2 
	+
	|\pt^2 \bp^3 \eta|^2 
	)
	\les
	\Vert \rhog \pt^2 \bp^3 D\eta\Vert_0^2
	+
	\Vert \pt v\Vert_3^2
	\\&
	\les
	C_\delta
	+
	\delta \sup_{t\in [0,T]} E(t)
	+
	C_\delta T P(\sup_{t\in [0,T]} E(t))
	,
	\end{split}
\end{align*}
where we have crucially used that
\begin{align*}
	\rhog^2=
	(1+w)^2
	-(1+w)
	=w+w^2 \geq w
	.
\end{align*}
The rest of the terms on the right hand side of \eqref{EQ549} are of lower order which can estimated analogously using the above arguments.
Thus, we infer that
\begin{align*}
	\Vert \bp^3 J,_3\Vert_{0.5}^2
	\les
	C_\delta
	+
	\delta \sup_{t\in [0,T]} E(t)
	+
	C_\delta T P(\sup_{t\in [0,T]} E(t))
	.
\end{align*}
Similar arguments show that
\begin{align*}
	\Vert \bp^3 \bp J\Vert_{0.5}^2
	\les
	C_\delta
	+
	\delta \sup_{t\in [0,T]} E(t)
	+
	C_\delta T P(\sup_{t\in [0,T]} E(t))
	.
\end{align*}
Therefore, we complete the proof of the Lemma by combining \eqref{EQ290}.
\end{proof}

\cole
\begin{Lemma}
\label{Limproved2}
For $\delta \in (0,1)$, we have
\begin{align}
	\begin{split}
	&
	\sup_{t\in [0,T]}
	\sum_{l=2}^4
	\Vert \bp^{4-l} \pt^l \eta  (t)\Vert_{1.5}^2
	\les
	C_\delta
	+
	\delta \sup_{t\in [0,T]} E(t)
	+
	C_\delta T P(\sup_{t\in [0,T]} E(t))
	\end{split}
\end{align}
where $C_\delta>0$ is a constant depending on $\delta$.
\end{Lemma}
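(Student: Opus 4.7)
The plan is to mirror the strategy of Lemma~\ref{Limproved1} by applying the div-curl type elliptic estimate of Lemma~\ref{elliptic} to the vector field $\bp^{4-l}\pt^l\eta$ for each $l\in\{2,3,4\}$. This reduces the $H^{1.5}$ control to
\begin{align*}
\Vert\bp^{4-l}\pt^l\eta\Vert_{1.5}^2
\les
\Vert\dive\bp^{4-l}\pt^l\eta\Vert_{0.5}^2
+\Vert\curl\bp^{4-l}\pt^l\eta\Vert_{0.5}^2
+\vert\bp^{5-l}\pt^l\eta\cdot N\vert_0^2
+\Vert\bp^{4-l}\pt^l\eta\Vert_0^2.
\end{align*}
The curl piece is already controlled by Lemma~\ref{Lcurl}. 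Since $N=(0,0,1)$, the boundary term equals $\vert\bp^{5-l}\pt^l\eta^3\vert_0^2\leq\vert\bp^{5-l}\pt^l\eta\vert_0^2$, which is precisely the boundary contribution furnished by Lemma~\ref{Lenergy2} for $l=2$ and by Lemma~\ref{Lenergy3} for $l=3,4$. The $L^2$ remainder is bounded through the fundamental theorem of calculus using $\Vert\pt^{l+1}\eta\Vert_0^2=\Vert\pt^l v\Vert_0^2\leq E(t)$.

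For the divergence, I would differentiate the Jacobian identity $\pt J=a^s_r v^r,_s$ by $\bp^{4-l}\pt^{l-1}$ and rearrange using $\dive\bp^{4-l}\pt^l\eta=\delta^s_r\bp^{4-l}\pt^l\eta^r,_s$ to write
\begin{align*}
\dive\bp^{4-l}\pt^l\eta
=
(\delta^s_r-a^s_r)\bp^{4-l}\pt^l\eta^r,_s
+\bp^{4-l}\pt^l J
-\sum_{\substack{0\leq i\leq 4-l,\,0\leq j\leq l-1\\ i+j\geq 1}}c_{i,j}\,\bp^i\pt^j a^s_r\,\bp^{4-l-i}\pt^{l-j}\eta^r,_s.
\end{align*}
The first piece is bounded via the multiplicative Sobolev inequality \eqref{EQkap} as $\Vert a-I_3\Vert_{1.5+\epsilon}\,\Vert\bp^{4-l}\pt^l\eta\Vert_{1.5}\les T\,\Vert\bp^{4-l}\pt^l\eta\Vert_{1.5}$, and is absorbed into the left-hand side for $T$ sufficiently small. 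The second satisfies $\Vert\bp^{4-l}\pt^l J\Vert_{0.5}\leq\Vert\pt^l J\Vert_{5-l}$ and is handled by Lemma~\ref{LnormalJ1} (for $l=4$) and Lemma~\ref{LnormalJ2} (for $l=2,3$), giving $C_\delta+\delta\sup E+C_\delta T P(\sup E)$. Each term in the commutator sum carries at least one derivative on $a$; applying \eqref{EQkap}--\eqref{EQkap2} together with the fundamental theorem of calculus (which supplies $\Vert a-I_3\Vert_2\les T$ and converts any $\Vert\pt^j a\Vert$ factor into a $\int_0^T\Vert\pt^{j+1}a\Vert$ integrand) yields $\les 1+T P(\sup E)$.

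The main obstacle will be the borderline case $l=2$, in which $\bp^2\pt^2 D\eta$ sits at exactly the regularity being estimated, so absorption into the left-hand side via the $T$-smallness of $\Vert a-I_3\Vert_{1.5+\epsilon}$ is unavoidable; any alternative redistribution of derivatives would fail to close. A secondary subtlety is that for the lower-order commutators at $l=2$ with $i+j=1$ one must place the $H^{1.5+\epsilon}$ factor in \eqref{EQkap} on the $a$-side so that the smallness provided by the fundamental theorem of calculus is preserved, while the remaining $\eta$-side stays within the admissible $H^{0.5}$ range; this parallels the corresponding step in Lemma~\ref{Limproved1} and is the only point where the choice of distribution is delicate.
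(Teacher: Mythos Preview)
Your proposal is correct and follows exactly the route the paper sketches (the paper omits the proof, remarking only that it is analogous to Lemma~\ref{Limproved1} with the boundary input coming from Lemmas~\ref{Lenergy2}--\ref{Lenergy3}). One minor caution on the commutator bound: for a term such as $\pt a\cdot\bp^{2}Dv$ arising at $l=2$, applying the fundamental theorem of calculus to the $\pt a$ factor alone leaves an order-one initial-data contribution that, after multiplication by $\|\bp^2 Dv\|_{0.5}^2\leq\|v\|_4^2$, does not carry the required smallness---you should either apply the fundamental theorem of calculus to the full product, or invoke the already-established bound $\|v\|_4^2\les C_\delta+\delta\sup E+C_\delta TP(\sup E)$ from Lemma~\ref{LnormalJ3}.
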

\colb

The proof of Lemma~\ref{Limproved2} is analogously to the proof of Lemma~\ref{Limproved1} by using Lemma~\ref{Lenergy3} and thus we omit the details.

\startnewsection{The improved curl estimates}{sec07}
\cole
\begin{Lemma}
\label{improvedcurl}
For $\delta \in (0,1)$, we have
\begin{align}
\begin{split}
	&
	\sup_{t\in [0,T]}
	\Vert \bp^3 \curl_\eta v(t)\Vert_{0.5}^2
	\les
	C_\delta
	+
	\delta \sup_{t\in [0,T]} E(t)
	+
	C_\delta T P(\sup_{t\in [0,T]} E(t))
	,
	\label{EQ607}
\end{split}
\end{align}
where $C_\delta>0$ is a constant depending on $\delta$.
\end{Lemma}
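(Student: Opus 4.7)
The plan is to mirror the final part of Lemma~\ref{Lcurl}, but working directly with $\curl_\eta v$. From \eqref{EQ52}--\eqref{EQ55} and the fundamental theorem of calculus, we have
$$\curl_\eta v(t) = \curl u_0 + \int_0^t \epsilon_{kji}(A^s_j)_t v^i,_s\, dt'.$$
Integrating by parts in time, with the $t' = 0$ contribution annihilated by $A(0) = I_3$, gives
$$\curl_\eta v(t) = \curl u_0 + \epsilon_{kji}(A^s_j(t) - \delta^s_j) v^i,_s(t) - \int_0^t \epsilon_{kji}(A^s_j - \delta^s_j) v^i_t,_s\, dt'.$$
Applying $\bp^3$ and taking the $H^{0.5}$ norm then produces three contributions: the initial-data term $\Vert \bp^3 \curl u_0\Vert_{0.5}^2 \leq M_0$; the pointwise-in-time boundary term $\Vert \bp^3((A(t)-I_3) Dv(t))\Vert_{0.5}^2$; and an integral remainder controlled via Jensen's inequality by $T \int_0^T \Vert \bp^3((A-I_3) Dv_t)\Vert_{0.5}^2\, dt'$.

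Expanding each of these via the Leibniz rule into terms $\bp^l(A-I_3) \cdot \bp^{3-l} D(v \text{ or } v_t)$, the mixed pieces with $l \in \{1, 2, 3\}$ are bounded using the multiplicative Sobolev inequalities \eqref{EQkap} and \eqref{EQkap2} together with the smallness $\Vert A - I_3\Vert_2 \les T$ from assumption~(vi), the improved regularity $\Vert \bp^3 A\Vert_{0.5}^2 \les \Vert \bp^3 \eta\Vert_{1.5}^2 \leq E$ from Lemma~\ref{Limproved1}, and the bounds $\Vert v\Vert_4^2 \leq E$ and $\Vert \pt v\Vert_3^2 \leq E$ built into \eqref{EQfun}. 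The fundamental theorem of calculus is applied to convert the resulting time-dependent factors into $1 + TP(\sup E)$ bounds, exactly as in the treatment of $\LL_{211}$ and $\LL_{22}$ at the end of the proof of Lemma~\ref{Lcurl}.

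The main obstacle is the top-order Leibniz piece in which all three tangential derivatives fall on $Dv$ (respectively on $Dv_t$), yielding $(A-I_3) \bp^3 Dv$ at $t' = t$ and $(A-I_3) \bp^3 Dv_t$ inside the integral. These factors carry $4.5$ space-derivatives on $v$ (resp.~on $v_t$), which are not directly controlled by $E$. The remedy is to exploit the smallness $\Vert A - I_3\Vert_{1.5+\epsilon}^2 \les T$ in concert with the improved tangential regularity of $\eta$ from Lemmas~\ref{Limproved1}--\ref{Limproved2}: the inequality \eqref{EQkap} yields a factor of $T$ in front of the offending $\Vert \bp^3 Dv\Vert_{0.5}^2$, which Young's inequality distributes between a $\delta \sup_{t \in [0,T]} E(t)$ piece and a $C_\delta T P(\sup E)$ piece. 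For the integral analogue, we rewrite $\bp^3 Dv_t = \pt^2 \bp^3 D\eta$ and invoke the $H^{1.5}$-control of $\bp^2 \pt^2 \eta$ from Lemma~\ref{Limproved2} together with one further time integration to close the loop. Collecting all contributions yields \eqref{EQ607}.
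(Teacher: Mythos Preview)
Your integration by parts is on the wrong factor, and this creates a genuine gap. After writing
\[
\curl_\eta v(t)=\curl u_0+\epsilon_{kji}(A^s_j-\delta^s_j)v^i,_s-\int_0^t\epsilon_{kji}(A^s_j-\delta^s_j)v^i_t,_s\,dt'
\]
and applying $\bp^3$, the top-order pointwise term is $(A-I_3)\,\bp^3 Dv$. Estimating this in $H^{0.5}$ via \eqref{EQkap} gives $\Vert A-I_3\Vert_{1.5+\epsilon}\Vert\bp^3 Dv\Vert_{0.5}\les T\,\Vert\bp^3 v\Vert_{1.5}$, but $\Vert\bp^3 v\Vert_{1.5}=\Vert\bp^3\pt\eta\Vert_{1.5}$ is \emph{not} controlled by the energy: the functional $E$ in \eqref{EQfun} contains $\Vert\bp^3\eta\Vert_{1.5}$ and $\Vert\bp^{4-l}\pt^l\eta\Vert_{1.5}$ only for $l\ge2$, skipping $l=1$. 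A factor of $T$ in front of an uncontrolled norm cannot be split by Young's inequality into $\delta\sup E+C_\delta TP(\sup E)$, so the argument does not close. The same obstruction hits your integral term: $\bp^3 Dv_t=\bp^3 D\pt^2\eta$ in $H^{0.5}$ would require $\Vert\bp^3\pt^2\eta\Vert_{1.5}$, whereas only $\Vert\bp^2\pt^2\eta\Vert_{1.5}$ is available, and a further time integration brings you back to $\bp^3 Dv$.

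The paper's remedy is to reverse the order: apply $\bp^3$ \emph{first} to \eqref{EQ53}, isolate the highest-order piece $\int_0^t (Dv\,AA)\,\bp^3 Dv\,dt'$, and only then integrate by parts in time on the factor $\bp^3 Dv=\pt(\bp^3 D\eta)$. This moves the time derivative onto the low-order coefficient $(Dv\,AA)$ and leaves $\bp^3 D\eta$ in $H^{0.5}$, which \emph{is} controlled by $\Vert\bp^3\eta\Vert_{1.5}$ through Lemma~\ref{Limproved1}. The resulting boundary term $\Vert\bp^3 D\eta\cdot Dv\,AA\Vert_{0.5}^2$ then produces exactly the $C_\delta+\delta\sup E+C_\delta TP(\sup E)$ structure you want.
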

\colb

\begin{proof}[Proof of Lemma~\ref{improvedcurl}]
Applying $\bp^3$ to \eqref{EQ53}, we obtain
\begin{align*}
	\begin{split}
	\Vert \bp^3 \curl_\eta v \Vert_{0.5}^2
	&
	\les
	\underbrace{\Vert \bp^3 \curl u_0\Vert_{0.5}^2}_{\II_1}
	+
	\underbrace{\Vert \int_0^t \bp^3 
	Q_0 (A(t'), Dv(t')) dt'\Vert_{0.5}^2}_{\II_2}
	.
	\end{split}
\end{align*}
The term $\II_1$ is bounded by $M_0$ since $\Vert \bp^3 \curl v (0)\Vert_{0.5} = \Vert \bp^3 \curl u_0\Vert_{0.5}$.
%
%
From \eqref{EQ55}, we infer that the highest order term in $\II_2$ can be written as
\begin{align*}
	- \epsilon_{kji}
	[\bp^3 v^i,_s A^m_j A^s_r v^r,_m 
	+
	\bp^3 v^r,_m A^m_j  A^s_r  v^i,_s 
	]
	.
	\llabel{EQ605}
\end{align*}
Using integration by parts in time, where we drop the indices for simplicity, we obtain
\begin{align*}
	\Vert
	\int_0^t 
	\bp^3 Dv D v AA  dt'
	\Vert_{0.5}^2
	\les
	\underbrace{	\Vert
	\int_0^t 
	\bp^3 D\eta (D v AA)_t  dt' \Vert_{0.5}^2}_{\II_{21}}
	+
	 \underbrace{	\Vert
	 	\bp^3 D\eta Dv AA 
 	\Vert_{0.5}^2 (t)}_{\II_{22}}
	.
\end{align*}
For the term $\II_{21}$, we appeal to the multiplicative Sobolev inequality \eqref{EQkap}, obtaining
\begin{align*}
	\begin{split}
		\II_{21}	
		&
		\les
		\int_0^T
		\Vert \bp^3 D \eta\Vert_{0.5}^2
		\Vert (Dv AA)_t \Vert_{2}^2
		\les
		T P(\sup_{t\in [0,T]} E(t))
		.
		\llabel{EQ604}
	\end{split}
\end{align*}
Similarly, we bound the term $\II_{32}$ as
\begin{align*}
	\begin{split}
		\II_{22}	
		&
		\les
		\Vert \bp^3 D \eta\Vert_{0.5}^2
		\Vert Dv AA \Vert_{2}^2
		\les
		C_\delta
		+
		\delta \sup_{t\in [0,T]} E(t)
		+
		C_\delta T P(\sup_{t\in [0,T]} E(t))
		,
		\llabel{EQ606}
	\end{split}
\end{align*}
where we appealed to Lemma~\ref{Limproved1} in the last inequality.
The rest of the terms in $\II_{2}$ are of lower order which can be estimated using the H\"older and Sobolev inequalities and the fundamental theorem of calculus. 
Consequently, we conclude the proof of the lemma.
\end{proof}

\startnewsection{Concluding the proof of Proposition~\ref{Lapriori}}{sec08}
We sum the estimates in Lemmas~\ref{Lcurl}, ~\ref{Lenergy1}--~\ref{Lenergy3}, ~\ref{LnormalJ1}--\ref{LnormalJ3}, \ref{Limproved1}--\ref{Limproved2}, and~\ref{improvedcurl},
thus completing the proof of Proposition~\ref{Lapriori}.
The proof of Theorem~\ref{T01} is thus concluded.

\startnewsection{The general case $\gamma> 1$}{sec09}
In this section we prove Theorem~\ref{T01} for general $\gamma> 1$.
The proof is similar to those in Sections~\ref{sec03}--\ref{sec08}, thus we only outline the necessary modifications and omit further details.

The Lagrangian curl of \eqref{EQ04} equals zero for any $\gamma>1$. 
Consequently, similar estimates in Section~\ref{sec03} can be carried out.
From \eqref{EQ185}, we may assume that
\begin{align*}
	-J^{-1} 
	(\rhol^\gamma J^{-\gamma}),_3 
	=
	-J^{-1} 
	(\rhol^\gamma,_3 J^{-\gamma} 
	-
	\gamma \rhol^\gamma
	J^{-\gamma-1} J,_3
	 ) 
	\geq 
	\frac{\gamma}{4}
	,
	\llabel{EQ484}
\end{align*}
on $[0,T] \times \Gamma$.
Thus, using the decomposition $\rho_0^\gamma = \rhog^\gamma + \rhol^\gamma$ (see \eqref{EQ184}--\eqref{EQ185}), similar energy estimates in Section~\ref{sec04} hold for $\gamma>1$.
The normal derivative estimates in Section~\ref{sec05} relies on the equation \eqref{EQ200}.
With the additional term $\gamma (\rho_0 J^{-1})^{\gamma-2}$, we get the same normal derivative estimates using the energy estimates.
From \eqref{EQ184} it is readily checked that
\begin{align*}
	\rhog^\gamma (x)
	\geq 
	\frac{\gamma w(x)}{2}
	\comma
	x\in \bar{\Omega}
	,
\end{align*}
from where we obtain
\begin{align*}
	\Vert F\Vert_{0.5}^2
	\les
	\int_\Omega 
	\dd (|DF|^2 +|F|^2)
	\les
	\Vert \rhog^{\frac{\gamma}{2}} DF\Vert_0^2
	+
	\Vert F\Vert_0^2
	.
\end{align*}
Therefore, the improved regularity in Section~\ref{sec06} can also be carried out using the above Hardy-type inequality.
In Section~\ref{sec07}, the improved curl estimates hold for $\gamma >1$.
Consequently, we conclude the proof of Proposition~\ref{Lapriori} and thus completing the proof of Theorem~\ref{T01}.

\section*{Acknowledgments}
The author was supported in part by the NSF grants DMS-2009458 and DMS-2205493.
The author is grateful to M.~Had\v zi\'c, J.~Jang, and I.~Kukavica for fruitful discussions.
\appendix

\section{Hardy-Sobolev embeddings, trace estimates, and Hodge-type bounds}
\label{secA}


In this section we provide some auxiliary results.
We first recall the following Hardy type inequality (cf.~\cite{KMP}).

\cole
\begin{Lemma}[Hardy inequality]
\label{hardy}
For any given $a>0$ and nonnegative integer $b \geq a/2$, we have
\begin{align*}
	\Vert F\Vert_{b-\frac{a}{2}}^2
	\leq
	C\sum_{k=0}^b
	 \int_\Omega
	\dd(x)^a
	|D^k F(x)|^2
	dx
	,
	\llabel{EQ543}
\end{align*}
for some constant $C>0$ depending on $a$, $b$, and $\Omega$.
In particular, let $p=1,2$, then 
\begin{align*}
	\Vert F\Vert_{1-\frac{p}{2}}^2
	\leq
	C \int_\Omega
	\dd(x)^p 
	\left(
	|DF (x)|^2 
	+ 
	|F(x)|^2
	\right)
	dx
	,
	\llabel{EQ211}
\end{align*}
for some constant $C>0$ depending on $\Omega$ and $p$.
\end{Lemma}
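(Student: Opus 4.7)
The plan is to reduce the statement, which is a standard weighted Hardy--Sobolev embedding, to a one--dimensional inequality on $(0,1)$ and then to use a partition of unity together with interpolation to handle the general indices. First, I would introduce a smooth cutoff $\chi$ with $\chi \equiv 1$ near $\Gamma=\{x_3=1\}$ and $\chi \equiv 0$ on $\{x_3 \leq 1/2\}$, and split $F = \chi F + (1-\chi)F$. On the support of $1-\chi$ the weight $\dd$ is bounded below by a positive constant, so the right--hand side dominates $\sum_{k\leq b}\Vert D^k((1-\chi)F)\Vert_0^2$, which in turn controls $\Vert (1-\chi)F\Vert_b^2 \geq \Vert (1-\chi)F\Vert_{b-a/2}^2$. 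The interior piece thus poses no obstacle, and all the weight is in the boundary piece $\chi F$.

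For the boundary piece, I would use that $\Omega = \mathbb{T}^2 \times (0,1)$ is already flat, and take the Fourier transform in the tangential direction. This reduces matters, for each frequency $\xi \in \mathbb{Z}^2$, to the one--dimensional family
\begin{equation*}
(1+|\xi|^2)^{b-a/2} \int_0^1 |\widehat{F}(\xi,x_3)|^2\,dx_3
\;\les\;
\sum_{k=0}^{b} (1+|\xi|^2)^{b-k} \int_0^1 (1-x_3)^{a}\,|\partial_3^k \widehat{F}(\xi,x_3)|^2\,dx_3.
\end{equation*}
For the endpoint case $b = \lceil a/2\rceil$, the key one--dimensional estimate $\int_0^1 |f|^2\,dy \les \int_0^1 (1-y)^a(|f'|^2+|f|^2)\,dy$ follows by writing $\int_0^1 |f|^2\,dy = -\int_0^1 \partial_y(1-y)\,|f|^2\,dy$, integrating by parts, applying Cauchy--Schwarz to the cross term $2\int_0^1 (1-y)\, f f'\,dy$, and absorbing with the Young inequality; the boundary contribution at $y=0$ is harmless since $\dd \geq 1/2$ there. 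The general real index $b-a/2$ is then obtained by (real) interpolation between the integer endpoints, and if $b-a/2<0$ arises in an intermediate step one uses the dual formulation via the embedding $\dd^{s} L^2 \hookrightarrow H^{-s}$ for $0\leq s<1/2$.

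The main obstacle is the interaction between the fractional Sobolev index on the left and the (possibly non-integer) weight exponent on the right: neither the integration-by-parts trick alone nor duality alone handles every case, so the interpolation step has to be performed in the scale of weighted Sobolev spaces rather than in the usual $H^s(\Omega)$ scale. This is where the hypothesis $b \geq a/2$ is used in an essential way: it guarantees that the target index $b-a/2$ is non-negative, so that after interpolation between the integer endpoint cases (each proven directly by the Hardy trick above combined with the inductive identity $\Vert F\Vert_{s}^2 \sim \Vert F\Vert_0^2 + \Vert DF\Vert_{s-1}^2$), the resulting estimate is meaningful without having to track negative-order boundary data.
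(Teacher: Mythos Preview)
The paper does not prove this lemma; it is stated in the appendix with the parenthetical ``(cf.~\cite{KMP})'' and no argument is given. So there is nothing to compare your proposal against on the paper's side.

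Your outline is the standard route to such weighted embeddings and is essentially correct: localize near $\Gamma$, reduce to a one--dimensional Hardy inequality in the normal variable, and interpolate. Two small points are worth tightening. First, after taking the tangential Fourier transform, the left side $\Vert F\Vert_{b-a/2}^2$ is not simply $(1+|\xi|^2)^{b-a/2}\int_0^1|\widehat F|^2\,dx_3$ when $b-a/2$ is non-integer; you also need the fractional $x_3$--regularity. One clean way around this is to prove the integer endpoint cases $b-a/2\in\{0,1,\ldots\}$ directly (your integration--by--parts trick, iterated $b$ times, does this for $a=2b$, and the case $a<2b$ is easier since the weight is less degenerate) and then interpolate in the weighted scale, exactly as you suggest at the end. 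Second, your one--dimensional identity $\int|f|^2=-\int\partial_y(1-y)|f|^2$ followed by Cauchy--Schwarz yields $\int|f|^2\les\int(1-y)^2|f'|^2+\epsilon\int|f|^2$, which after absorption gives the $a=2$, $b=1$ case; for $a<2$ you should instead write $\int|f|^2=-\frac{1}{1-a}\int\partial_y(1-y)^{1-a}\cdot(1-y)^a|f|^2$ (or simply note $(1-y)^a\geq(1-y)^2$ on $(0,1)$ is false, so use the direct $a$--dependent identity). These are routine adjustments; the skeleton of your argument is sound.
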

\colb

The following lemma provides a Hodge type elliptic estimates (cf.~\cite{Ta}).
\cole
\begin{Lemma}[Hodge type estimate]
\label{elliptic}
Let $F$ be a vector field over $\Omega$ and $s \geq 1$.
Then we have
\begin{align}
	\Vert F\Vert_s
	\leq
	C 
	\left(
	\Vert F\Vert_{0}
	+
	\Vert \dive F\Vert_{s-1}
	+
	\Vert \curl F\Vert_{s-1}
	+
	\Vert \bp F \cdot N \Vert_{H^{s-1.5}(\partial \Omega)}
	\right)
	,
	\label{EQ227}
\end{align}
for some constant $C>0$, where $N$ denotes the outward unit normal to $\partial \Omega$.
\end{Lemma}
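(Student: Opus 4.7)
The plan is to establish \eqref{EQ227} by induction on integer $s$, with the non-integer case following by interpolation. The base case $s=1$ rests on a pointwise vector-calculus identity and integration by parts, while the inductive step combines tangential differentiation with algebraic recovery of the normal derivative.

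For the base case $s=1$, I would start from the pointwise identity
\begin{align*}
|DF|^2 = |\dive F|^2 + |\curl F|^2 + \partial_j(F^k \partial_k F^j - F^j \partial_\ell F^\ell \delta_{jk}),
\end{align*}
which follows from $|\curl F|^2 = |DF|^2 - \partial_j F^k \partial_k F^j$ combined with the product rule. Integrating over $\Omega = \TT^2 \times (0,1)$ and using that the outward normal on each flat face equals $N = (0,0,\pm 1)$, the boundary integral reduces (after discarding tangential total derivatives that vanish on $\TT^2$) to a multiple of $\int_{\partial\Omega} F^\alpha \partial_\alpha F^3 \, dS$. Pairing this by $H^{0.5}$-$H^{-0.5}$ duality on $\partial\Omega$, applying the standard trace inequality $|F|_{H^{0.5}(\partial\Omega)} \les \Vert F\Vert_{H^1(\Omega)}$, and absorbing the resulting $\Vert F\Vert_1$ factor via the Young inequality, yields \eqref{EQ227} at $s=1$.

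For $s \geq 2$, I would proceed by induction. Since $\bp$ commutes with $\dive$, $\curl$, and the trace onto $\partial\Omega$, applying $\bp^{s-1}$ to $F$ and invoking the base case for $\bp^{s-1} F$ controls $\Vert \bp^{s-1} D F\Vert_0$ in terms of the right-hand side of \eqref{EQ227}. To recover the remaining normal derivatives, I would use the algebraic identities
\begin{align*}
\partial_3 F^3 = \dive F - \partial_\alpha F^\alpha, \qquad \partial_3 F^1 = \partial_1 F^3 + (\curl F)^2, \qquad \partial_3 F^2 = \partial_2 F^3 - (\curl F)^1,
\end{align*}
which express each $\partial_3$ derivative of $F$ in terms of $\dive F$, $\curl F$, and $\bp F$. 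Iteratively substituting these identities to reduce the count of $\partial_3$'s, combined with the tangential bound just obtained and the inductive hypothesis at lower orders, produces the full $H^s$ control.

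The principal obstacle is the sharp tracking of boundary regularity: already at $s=1$ the boundary integrand must be paired at exactly the dual scale $H^{-0.5}$, and at higher $s$ one must verify that the commutator and trace losses accumulate to precisely $|\bp F \cdot N|_{H^{s-1.5}(\partial\Omega)}$ rather than a stronger norm. Because $\partial\Omega$ consists of two flat disjoint copies of $\TT^2$, no local coordinate charts or curvature corrections intrude, and the argument reduces to the classical Hodge decomposition framework as in \cite{Ta}.
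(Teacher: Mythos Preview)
Your argument is correct and self-contained, but it differs from what the paper does. The paper does not give a detailed proof: it simply invokes the identity $-\Delta F = \curl\curl F - \nabla\dive F$ and defers to standard second-order elliptic regularity theory (citing \cite{Ta}). In that framework one regards $F$ as the solution of a Laplace-type system with right-hand side $\curl\curl F - \nabla\dive F$ and Neumann-type boundary data encoded by $\bp F\cdot N$, then reads off \eqref{EQ227} from classical elliptic estimates. Your route is more elementary: you use the first-order pointwise identity for $|DF|^2$, integrate by parts to obtain the $s=1$ case directly, and then exploit the flat geometry of $\Omega=\TT^2\times(0,1)$ to recover normal derivatives algebraically from $\dive F$, $\curl F$, and tangential derivatives. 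The advantage of your approach is that it avoids any black-box elliptic machinery and makes every constant explicit; the advantage of the paper's approach is that it generalizes immediately to curved domains, where your algebraic identities for $\partial_3 F^i$ would acquire lower-order curvature terms. For the specific $\Omega$ in this paper, both routes are equally valid.
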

\colb

The estimate \eqref{EQ227} follows from the well-known identity $-\Delta F = \curl \curl F - \nabla \dive F$ and the analysis of second order elliptic operators.

We recall from \cite{A} the following Sobolev trace theorem.
\cole
\begin{Lemma}
	\label{trace}
	Let $F\in H^s (\Omega)$ where $s>\frac{1}{2}$. Then we have
	\begin{align*}
		\Vert F \Vert_{H^{s-\frac{1}{2}} (\partial \Omega)}^2
		\leq
		C 
		\Vert F \Vert_s^2
		,
	\end{align*}
	for some constant $C>0$.
\end{Lemma}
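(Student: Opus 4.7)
The plan is to establish the estimate by the standard three-step reduction used for Sobolev trace inequalities: localize, flatten, and then handle the half-space model by Fourier analysis. Since the statement is the classical trace theorem, which in our configuration can even be invoked with $\partial\Omega$ flat, there is no geometric novelty; I only sketch the structure.

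First I would reduce to smooth functions via density: $C^\infty(\overline{\Omega})$ is dense in $H^s(\Omega)$, so it suffices to prove the inequality for $F \in C^\infty(\overline{\Omega})$ with a constant independent of $F$, and the inequality then extends by completeness. In the slab $\Omega = \mathbb{T}^2\times(0,1)$ used throughout the paper the boundary is already globally flat, so the partition-of-unity/chart step trivializes: I simply decompose $F = \chi_0 F + \chi_1 F$ with smooth cutoffs separating the two boundary components $\Gamma$ and $\{x_3=0\}$ and treat each piece separately. Multiplication by a smooth cutoff is bounded on $H^s$ for every $s\geq 0$, so the norms are controlled.

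Next I would reduce each piece to a half-space problem. Thanks to the slab geometry the map is the identity; in a general domain one would use a smooth local diffeomorphism flattening $\partial\Omega$, with Jacobian bounded above and below, to produce $G\in H^s(\mathbb{R}^3_+)$ whose trace on $\{y_3=0\}$ corresponds to the trace of $\chi_j F$. Applying a universal extension operator $E\colon H^s(\mathbb{R}^3_+)\to H^s(\mathbb{R}^3)$, one reduces to proving
\begin{equation*}
\Vert G(\cdot,0)\Vert_{H^{s-1/2}(\mathbb{R}^2)} \leq C\Vert G\Vert_{H^s(\mathbb{R}^3)}
\end{equation*}
for $G\in\mathcal{S}(\mathbb{R}^3)$. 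By the Fourier transform identity $\widehat{G(\cdot,0)}(\xi')= (2\pi)^{-1}\int_{\mathbb{R}} \widehat G(\xi',\xi_3)\,d\xi_3$ and the Cauchy--Schwarz inequality with the splitting $(1+|\xi|^2)^{s/2}\cdot(1+|\xi|^2)^{-s/2}$, the trace norm is bounded by $\Vert G\Vert_{H^s}$ times $\bigl(\int_{\mathbb{R}}(1+|\xi'|^2+\xi_3^2)^{-s}\,d\xi_3\bigr)^{1/2}$. A rescaling $\xi_3 = (1+|\xi'|^2)^{1/2}\tau$ shows this last factor equals a constant multiple of $(1+|\xi'|^2)^{(1-2s)/4}$, which is precisely the weight defining the $H^{s-1/2}(\mathbb{R}^2)$ norm. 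Integrating in $\xi'$ then yields the desired bound.

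The only essential obstacle is the threshold condition $s>\tfrac12$: it enters exactly through the convergence of $\int_{\mathbb{R}}(1+\tau^2)^{-s}d\tau$, which diverges at $s=\tfrac12$. Below the threshold the pointwise restriction to $\partial\Omega$ is not even well defined, so the inequality must fail; above it the argument just described gives the stated constant. In the paper itself the lemma is simply invoked from \cite{A}, and it is used together with Lemma~\ref{hardy}, Lemma~\ref{elliptic}, and the normal-trace bound of Lemma~\ref{normaltrace} in the various energy and elliptic estimates throughout Sections~\ref{sec04}--\ref{sec06}.
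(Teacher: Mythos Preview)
Your sketch is correct: this is the classical Sobolev trace theorem, and the localize--flatten--Fourier argument you outline (with the key convergence $\int_{\mathbb{R}}(1+\tau^2)^{-s}\,d\tau<\infty$ for $s>\tfrac12$) is exactly the standard proof. The paper itself does not prove this lemma at all; it is stated in the appendix as a recalled result from Adams~\cite{A}, so there is no argument in the paper to compare against. You have correctly identified this, and your remark that in the slab geometry $\Omega=\mathbb{T}^2\times(0,1)$ the partition-of-unity and flattening steps are trivial is apt.
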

\colb

The next lemma provide the normal trace estimate which is needed in the elliptic estimates (cf.~\cite{A,CCS}).

\cole
\begin{Lemma}[Normal trace]
\label{normaltrace}
Let $F \in L^2 (\Omega)$ and $\dive F \in L^2 (\Omega)$.
Then 
\begin{align*}
	\Vert \bp F \cdot N 
	\Vert_{H^{-0.5}(\partial \Omega)}^2
	\leq
	C \left(
	\Vert \bp F\Vert_0^2
	+
	 \Vert \dive F\Vert_{0}^2
	 \right)
	 ,
\end{align*}
for some constant $C>0$, where $N$ denotes the outward unit normal vector to $\partial \Omega$.
\end{Lemma}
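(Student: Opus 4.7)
The plan is to prove this via the duality definition of $H^{-1/2}(\partial \Omega)$ combined with Green's identity and a tangential integration by parts that exploits the periodic structure of the reference domain. By density I would first assume $F$ is smooth, treating the general case by approximation at the end.

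For any $\phi \in H^{1/2}(\partial\Omega)$, I would choose an extension $\Phi \in H^1(\Omega)$ of $\phi$ with $\Vert \Phi \Vert_1 \les \Vert \phi \Vert_{H^{0.5}(\partial\Omega)}$, and express the boundary pairing via Green's identity applied to the vector field $\bp F$:
\begin{align*}
\int_{\partial\Omega} (\bp F \cdot N)\, \phi \, dS
= \int_\Omega \bp F \cdot \nabla \Phi \, dx + \int_\Omega \dive(\bp F)\, \Phi \, dx.
\end{align*}
The first integral is controlled directly by Cauchy--Schwarz as $\Vert \bp F\Vert_0 \Vert \Phi\Vert_1 \les \Vert \bp F\Vert_0 \Vert \phi\Vert_{H^{0.5}(\partial\Omega)}$.

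The crux is the second integral, where the naive bound $\Vert \dive(\bp F)\Vert_{-1}\Vert \Phi\Vert_1$ would require extra regularity of $\dive F$. Instead, I would use the commutation $\dive(\bp F) = \bp(\dive F)$ and integrate by parts in $\bp = (\partial_1,\partial_2)$. Because the reference domain $\Omega = \mathbb{T}^2 \times (0,1)$ is periodic in the tangential variables and $\bp$ is purely tangent to $\partial\Omega = \{x_3=0\}\cup\{x_3=1\}$, this integration produces no boundary contribution:
\begin{align*}
\int_\Omega \bp(\dive F)\,\Phi \, dx = -\int_\Omega \dive F \cdot \bp \Phi \, dx,
\end{align*}
which is in turn bounded by $\Vert \dive F\Vert_0 \Vert \bp \Phi\Vert_0 \les \Vert \dive F\Vert_0 \Vert \phi\Vert_{H^{0.5}(\partial\Omega)}$. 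Taking the supremum over $\phi$ with $\Vert \phi\Vert_{H^{0.5}(\partial\Omega)} \leq 1$ then yields the desired estimate.

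The only real obstacle is making the above rigorous for merely $L^2$ vector fields, which is a mild issue. I would approximate $F$ by smooth vector fields $F_n$ (using interior mollification together with periodicity in $x_1,x_2$) so that $F_n \to F$, $\bp F_n \to \bp F$, and $\dive F_n \to \dive F$ in $L^2$; apply the identity to each $F_n$; and pass to the limit using the continuity of the boundary normal trace map on the closure of smooth fields in the graph norm $\Vert F\Vert_0 + \Vert \bp F\Vert_0 + \Vert \dive F\Vert_0$. This is a standard density argument for $H(\dive; \Omega)$-type spaces.
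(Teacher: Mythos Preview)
The paper does not supply its own proof of this lemma; it is stated in the appendix with a reference to \cite{A,CCS} and used as a black box. Your argument is a correct and self-contained proof tailored to the reference domain $\Omega = \mathbb{T}^2 \times (0,1)$: the Green's identity reduces the pairing to two interior integrals, and the only subtle point---that the standard $H(\dive)$ trace estimate would give $\Vert \dive(\bp F)\Vert_0 = \Vert \bp \dive F\Vert_0$ rather than $\Vert \dive F\Vert_0$---is handled cleanly by your tangential integration by parts, which produces no boundary terms thanks to periodicity in $x_1,x_2$. This is precisely the mechanism the paper exploits elsewhere (see e.g.\ the treatment of $\JJ_{221}$ in Lemma~\ref{Lenergy2}), so your proof fits the paper's framework well. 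The density argument is standard for $H(\dive)$-type spaces and causes no trouble here.
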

\colb
%

We recall from \cite{CS1} the following 
duality inequality.
\cole
\begin{Lemma}[Duality estimate]
\label{duality}
Let $F \in H^{0.5} (\Omega)$.
Then we have
\begin{align*}
	\Vert \bp F\Vert_{-0.5}^2
	\leq
	C \Vert F\Vert_{0.5}^2
	,
\end{align*}
for some constant $C>0$.
\end{Lemma}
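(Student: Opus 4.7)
The plan is to derive the duality bound by interpolating between two elementary endpoint estimates. The crucial geometric input is that $\bp_\alpha$ for $\alpha=1,2$ is a purely tangential derivative on $\Omega = \mathbb{T}^2 \times (0,1)$, so integration by parts against an $H^1$-test function produces no boundary contribution, because on each connected component of $\partial \Omega$ the outward unit normal is $\pm(0,0,1)$ and therefore $N_1 = N_2 = 0$ identically.

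\textit{Endpoint at $s=0$.}\ I would first show that $\bp_\alpha \colon L^2(\Omega) \to H^{-1}(\Omega)$ with operator norm at most one. For $\phi \in H^1(\Omega)$, integration by parts yields $\int_\Omega (\bp_\alpha F)\phi \,dx = -\int_\Omega F (\bp_\alpha \phi)\,dx + \int_{\partial \Omega} F\phi N_\alpha \,dS$, and the boundary integral vanishes by the normal-direction observation above. Cauchy--Schwarz then gives $|\langle \bp_\alpha F, \phi\rangle| \leq \Vert F\Vert_0 \Vert \phi\Vert_1$, which is exactly the desired $H^{-1}(\Omega) = (H^1(\Omega))'$ bound.

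\textit{Endpoint at $s=1$.}\ The bound $\bp_\alpha \colon H^1(\Omega) \to L^2(\Omega)$ has norm one by definition.

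\textit{Interpolation.}\ Invoking the standard complex interpolation identities $[L^2(\Omega), H^1(\Omega)]_{1/2} = H^{1/2}(\Omega)$ and $[H^{-1}(\Omega), L^2(\Omega)]_{1/2} = H^{-1/2}(\Omega)$ (as in Lions--Magenes or Bergh--L\"ofstr\"om), the two endpoint bounds give the continuity of $\bp_\alpha$ from $H^{1/2}(\Omega)$ to $H^{-1/2}(\Omega)$, and summing over $\alpha = 1, 2$ yields the lemma.

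The only subtle point is verifying the second interpolation identity under the paper's duality convention $H^{-s}(\Omega) = (H^s(\Omega))'$ without vanishing-trace requirement, which is slightly nonstandard. The cleanest workaround is to extend $F$ to $\RR^3$ via a bounded linear extension operator adapted to $\Omega$, so that everything is transferred to the Bessel potential scale on $\RR^3$, where the bound is immediate from the symbol inequality $|\xi_\alpha| \leq (1+|\xi|^2)^{1/2}$; one then restricts back to $\Omega$ and invokes the duality definition. Alternatively, since $\Omega = \mathbb{T}^2 \times (0,1)$ has product structure, a partial Fourier series in the tangential variables combined with the one-dimensional Sobolev trace theory on $(0,1)$ gives the bound directly. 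I expect this verification of the interpolation identity, rather than the operator bound itself, to be the main technical hurdle, since the operator estimate at each integer endpoint is completely elementary once the vanishing of the boundary term is noted.
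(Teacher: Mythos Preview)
The paper does not supply its own proof of this lemma; it simply states the result and cites \cite{CS1} (Coutand--Shkoller). So there is no argument in the paper to compare against.

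Your interpolation argument is correct. The two endpoint bounds are exactly right, and the key geometric observation that $N_\alpha = 0$ on both components of $\partial\Omega$ is what makes the $s=0$ endpoint work without any trace loss. Regarding the interpolation identity you flag as subtle: since $H^1(\Omega)$ and $L^2(\Omega)$ are reflexive, the duality theorem for complex interpolation gives $[(H^1)',(L^2)']_{1/2} = ([H^1,L^2]_{1/2})' = (H^{1/2})'$ directly, which is precisely the paper's $H^{-1/2}$; this closes the loop without needing the extension-to-$\RR^3$ detour. The partial-Fourier alternative you sketch also works and is arguably the most transparent route on this specific product domain.
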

\colb


\small

\end{document}